\documentclass[12pt, reqno]{amsart}%
\usepackage[english]{babel}
\usepackage{amsthm}
\usepackage{amsmath}
\usepackage{amssymb}
\usepackage{dsfont}
\usepackage{graphicx}
\usepackage{esint}
\usepackage[shortlabels]{enumitem}
\usepackage{amscd,mdwlist,relsize}
\usepackage[margin=1in]{geometry}
\usepackage{color}
\usepackage{cite}
\usepackage{bbm, mathrsfs,pgf,tikz}
\usepackage{tcolorbox}
\usepackage[backgroundcolor=white, bordercolor=blue,
linecolor=blue]{todonotes}
\usepackage{url}
\usepackage{dsfont}
\usepackage{cases}
\usetikzlibrary{arrows}

\usepackage[pagebackref=false, pdfpagelabels, plainpages=false]{hyperref} 
\hypersetup{
colorlinks=true,
linkcolor=red,
filecolor=magenta,
urlcolor=cyan,
citecolor=blue,
}

\theoremstyle{plain}  
\newtheorem{theorem}{Theorem}[section]
\newtheorem{lemma}[theorem]{Lemma}

\theoremstyle{definition}  
\newtheorem{definition}[theorem]{Definition}
\newtheorem{example}[theorem]{Example}

\newtheorem{remark}[theorem]{Remark}   
\theoremstyle{remark}
\newtheorem*{remark*}{Remark}

\numberwithin{equation}{section}

\DeclareMathOperator{\loc}{loc}

\DeclareMathOperator{\pv}{\operatorname{p.\!v.}}

\DeclareMathOperator{\cE}{\mathcal{E}}

\DeclareMathOperator{\cJ}{\mathcal{J}}

\DeclareMathOperator{\R}{\mathbb{R}}

\renewcommand{\d}{\,\mathrm{d}}
\renewcommand{\div}{\operatorname{div}}

\newcommand{\vertiii}[1]{{\left\vert\kern-0.25ex\left\vert\kern-0.25ex\left\vert #1 \right\vert\kern-0.25ex\right\vert\kern-0.25ex\right\vert}}
\addto\extrasenglish{%

}
\makeatletter
\@namedef{subjclassname@2020}{\textup{2020} Mathematics Subject Classification}
\makeatother

\title{Optimal stability of Dirichlet problem for the regional fractional $p$-Laplacian}
\author{Guy Foghem}
\address{{\tiny Brandenburgische Technische Universit\"at Cottbus--Senftenberg, Fakult\"{a}t 1: MINT Fachgebiet Mathematik, Platz der Deutschen Einheit 1, 03046 Cottbus, Germany.} \href{https://orcid.org/0000-0002-8917-7309}{ORCID}}
\email{guy.foghem[at]b-tu.de}
\thanks{Financial support from the Deutsche Forschungsgemeinschaft (DFG) through the Walter Benjamin Programme (project FO~1699/1-1) is gratefully acknowledged.}

\keywords{Optimal stability, Fractional Sobolev spaces, Trace operator,  Regional fractional $p$-Laplace operator, Regional Dirichlet problem}
\subjclass[2020]{
26D15, 
35J20, 
35J92, 
46E35, 
46B70 , 
49J40, 
49J45  
}

\begin{document}

\begin{abstract}
We establish the optimal stability of Dirichlet boundary value problem for the regional (fractional) $p$-Laplacian $(-\Delta)^s_{p,\Omega}$ with $0<s\leq 1$, $\frac{1}{s}<p<\infty$ and $\Omega\subset \R^d$ bounded Lipschitz. More precisely, if $u_s \in W^{s,p}(\Omega)$ satisfies
$(-\Delta)^s_{p,\Omega} u_s = f_s$ in $\Omega$
and $u_s = g_s$ on $\partial \Omega$, then under appropriate condition on the date $f_s$ and $g_s$  we show that $\|u_s - u_1\|_{W^{s,p}(\Omega)} \to 0 \quad \text{as } s \to 1^-.$
 We also obtain an analogous optimal stability of the normalized Dirichlet eigenpairs $(\lambda_s,\varphi_s)$ associated with $(-\Delta)^s_{p,\Omega}$ .
\end{abstract}

\maketitle


\section{Introduction}
\label{sec:introduction}
 The analysis of nonlocal-to-local asymptotic as the fractional parameter $s \to 1^-$ has drawn considerable attention in recent years, serving as a bridge between nonlocal and classical variational problems. In this paper, we contribute to this line of research by analyzing \textit{the optimal convergence }of weak solutions $u_s\in W^{s,p}(\Omega)$ with $0<s\leq 1$ and $sp>1$ satisfying the Dirichlet problem
\begin{align}\label{eq:main-prob-regio-frac}
(-\Delta)_{p,\Omega}^s u_s = f_s \quad \text{ in }\,\, \Omega \quad \text{and} \quad \gamma_0^s(u_s) = g_s \quad \text{on }\,\, \partial\Omega.
\end{align}
Our second aim is to analyze the asymptotic of the Dirichlet eigenpairs $(\lambda_s,\varphi_s)$ associated with $(-\Delta)^s_{p, \Omega}$. Throughout this work, unless otherwise stated $\Omega \subset \R^d$, $d\geq 1$, denotes a bounded Lipschitz open set,  $1<p<\infty$, $sp>1$, $\gamma^s_0: W^{s,p}(\Omega)\to W^{s-\frac{1}{p},p}(\partial \Omega)$ is the trace operator, $f_s\in (W^{s,p}_0(\Omega))'$, $g_s\in W^{s-\frac{1}{p},p}(\partial \Omega)$, and  $(-\Delta)_{p,\Omega}^s$ for $0 < s \leq 1$ is the normalized regional fractional $p$-Laplacian defined for $x\in\Omega$

\begin{align*}
(-\Delta)^s_{p,\Omega}u(x)&:=C_{d,p,s}\pv \int_{\Omega}\frac{|u(x)-u(y)|^{p-2}(u(x)-u(y))}{|x-y|^{d+sp}}\d y\quad(0 < s <1),\\
(-\Delta)^1_{p,\Omega}u(x)&:= -\Delta_pu(x)= -\div(|\nabla u(x)|^{p-2} \nabla u(x))\qquad(s=1),
\end{align*}
where, as  in \cite{Fog25}, we consider the normalizing constant
\begin{align*}
C_{d,s,p}=\frac{s(1-2s)\Gamma(\frac{d+sp}{2})}{\pi^{\frac{d-1}{2}}\Gamma(\frac{1+s p}{2})\Gamma(p(1-s)) \cos(s\pi)}.
\end{align*}
Our interest in studying the asymptotic behavior of problem \eqref{eq:main-prob-regio-frac} is partly motivated by the  pointwise convergence  $(-\Delta)^s_{p,\Omega} u(x)\to (-\Delta)^1_{p,\Omega} u(x)$ as $s\to 1^-$, which holds for an appropriately regular function $u$; see \cite[Section 9]{Fog25}.
The constant $C_{d,s,p}$, verifies $C_{d,p,s}\asymp s(1-s)$ and exhibits the following asymptotic behaviors
\begin{align}\label{eq:asymp-normaliz-pconst}
\lim_{s\to 1^-} \frac{K_{d,p}C_{d,p,s}}{s(1-s)}=\lim_{s\to 0^+}\frac{\Gamma(p+1)C_{d,p,s}}{s(1-s)}=\lim_{s\to 0^+} \frac{2C_{d,2, \frac{sp}{2}}}{s(1-s)}=\frac{2p}{|\mathbb{S}^{d-1}|},
\end{align}
with the constant $K_{d,p}$ given by
\begin{align*}
K_{d,p}
&= \frac{1}{|\mathbb{S}^{d-1}|}\int_{\mathbb{S}^{d-1}} |w_d|^p \, d\sigma_{d-1}(w)
= \frac{\Gamma\big(\frac{d}{2}\big) \Gamma\big(\frac{p+1}{2}\big)}{\Gamma\big(\frac{d+p}{2}\big) \Gamma\big(\frac{1}{2}\big)}.
\end{align*}

The Sobolev-Slobodeckij  space $W^{s,p}(\Omega): =\{ u\in L^p(\Omega)\, :\, \|u\|_{W^{s,p}(\Omega)}<\infty\}$ with $s\in [0,1]$ and $1\leq p<\infty$ is equipped with the normalized norm
\begin{align*}
\|u\|_{W^{s,p}(\Omega)}&:=  \big(\|u\|^p_{L^p(\Omega)}+ [u]^p_{W^{s,p}(\Omega)} \big)^{1/p}\qquad s\in [0,1],
\end{align*}
As usual $W^{s,p}_0(\Omega)$ is the closure of $C_c^\infty(\Omega)$ in $W^{s,p}(\Omega)$ and is also given by $W^{s,p}_0(\Omega) = \{ u \in W^{s,p}(\Omega) : \gamma^s_0(u) = 0 \}$ since $\Omega$ is bounded Lipschitz. Here the fractional Gagliardo seminorm of order $s\in [0,1]$ is defined by
\begin{align*}
[u]^p_{W^{s,p}(\Omega)} :=
\begin{cases}
\frac{2|\mathbb{S}^{d-1}|}{p}\|u\|_{L^p(\Omega)}^p, & \text{if } s = 0, \\[2ex]
\displaystyle s(1-s) \iint_{\Omega\times\Omega}
\frac{|u(x) - u(y)|^p}{|x - y|^{d + sp}}\d y\d x, & \text{if } 0 < s < 1, \\
K_{d,p} \frac{|\mathbb{S}^{d-1}|}{p} \|\nabla u\|_{L^p(\Omega)}^p, & \text{if } s = 1.
\end{cases}
\end{align*}
The scaling factor $s(1-s)$ as highlighted in \cite{BBM01, MS02} and also in the earlier work \cite{AAS67} is crucial for our asymptotic analysis. In particular,  $[u]_{ W^{s,p}(\Omega)}\to[u]_{ W^{1,p}(\Omega)} $ as $s\to 1^-$.
\smallskip

Next, we need some notion of convergence for $f_s$ and $g_s$. The standard definition of weak convergence can not be directly applied to the functionals $f_s \in (W^{s,p}_0(\Omega))'$, as the underlying space $W^{s,p}_0(\Omega)$ varies dynamically with $s \to 1^-$. To overcome this difficulty, we introduce a notion of asymptotic weak convergence, formulated to mirror the classical properties of weak convergence in a fixed Banach space. Specifically, this framework ensures both the asymptotic boundedness of the sequence and a suitable variant of the weak–strong convergence lemma. To this end, we enforce the following natural assumptions on the data profiles $(f_s)_{s}$ and $(g_s)_{s}$.
\begin{enumerate}[label={(\Alph*)}, start=6]
\item 
\label{item:asymp-fs} \textbf{Asymptotic weak convergence:} The sequence of functionals $(f_s)_s$ with  $f_s \in(W^{s,p}_0(\Omega))'$ converges asymptotically weakly to a functional $f_1 \in (W^{1,p}_0(\Omega))'$ as $s \to 1^-$, meaning that the following two conditions are satisfied:
\medskip

\begin{enumerate}[$(F_1)$]\item \textbf{Asymptotic boundedness:} The sequence $(f_s)_s$ remains bounded as $s$ approaches $1$, that is, for some $s_0\in (0,1)$ we have
\begin{align*}\sup_{s\in (s_0,1)} \|f_s\|_{(W^{s,p}_0(\Omega))'} < \infty.
\end{align*}
\item \textbf{Weak--strong  property:} For any sequence $(v_s)_s$ with $v_s \in W^{s,p}_0(\Omega)$ and $v \in W^{1,p}_0(\Omega)$ such that $v_s \to v$ strongly in $L^p(\Omega)$ as $s \to 1^-$, we have
\begin{align*}\lim_{s \to 1^-} \langle f_s , v_s \rangle_s = \langle f_1, v\rangle_1,
\end{align*}
where $\langle \cdot, \cdot \rangle_s$ $0<s\leq 1$, denotes the duality pairing between $W^{s,p}_0(\Omega)$ and $(W^{s,p}_0(\Omega))'$.
In particular, choosing $v_s = v$ for a fixed $v \in W^{1,p}_0(\Omega)$ yields the standard weak convergence:
\begin{align*}\lim_{s \to 1^-} \langle f_s , v \rangle_s = \langle f_1 , v \rangle_1.
\end{align*}
\end{enumerate}
\item
\label{item:asymp-gs} \textbf{Asymptotic strong convergence of boundary data:} The sequence of boundary data $(g_s)_s$ with $g_s \in W^{s-\frac{1}{p},p}(\partial\Omega)$ converges asymptotically strongly to a limit function $g_1 \in W^{1-\frac{1}{p},p}(\partial\Omega)$ as $s \to 1^-$, that is,
\begin{align*}
\lim_{s\to1^-}	\|g_s-g_1\|_{W^{s-\frac{1}{p},p}(\partial\Omega) }=0.
\end{align*}
In particular, for each $\frac{1}{p}<\eta<1$, Lemma \ref{lem:frac-unif-bound-trace-bis} implies
\begin{align*}
\lim_{s\to1^-}\big(\|g_s-g_1\|_{L^p(\partial\Omega) }+ \|g_s-g_1\|_{W^{\eta-\frac{1}{p}, p}(\partial\Omega) }\big)=0.
\end{align*}
\end{enumerate}

Let us mention at one examples for each condition \ref{item:asymp-fs} and \ref{item:asymp-gs}. The condition \ref{item:asymp-fs} holds in particular if
$f_1, f_s \in L^{p'}(\Omega)$ where  $p'=p/(p-1)$ and the sequence
$(f_s)_s$ converges weakly to $f_1$ in $L^{p'}(\Omega)$ as $s\to 1^-$, that is,
\begin{align*}
\lim_{s\to1^-} \int_{\Omega} f_s(x)\, v(x)\,\d x
= \int_{\Omega} f_1(x)\, v(x)\,\d x
\qquad \text{for all } v \in L^{p}(\Omega).
\end{align*}
By the boundedness principle, the weak convergence of $(f_s)_s$ implies the boundedness
of the sequence $(\|f_s\|_{L^{p'}(\Omega)})_(s\in (s_0, 1))$ for some $s_0\in (0,1)$. Moreover,
\begin{align*}
\sup_{s\in(0,1)} \| f_s\|_{(W^{s,p}_0(\Omega))'}
\leq
\sup_{s\in(0,1)} \| f_s\|_{L^{p'}(\Omega)} < \infty.
\end{align*}
Now, if $(v_s)_s$ is a sequence such that $v_s \to v$ in $L^p(\Omega)$, then,
since $f_s \rightharpoonup f_1$ in $L^{p'}(\Omega)$, the standard weak--strong
convergence lemma yields
\begin{align*}
\lim_{s\to1^-} \int_{\Omega} f_s(x)\, v_s(x)\,\d x
= \int_{\Omega} f_1(x)\, v(x)\,\d x.
\end{align*}
Next, regarding condition \ref{item:asymp-gs}, we assume that $g_s, g_1 \in W^{1-\frac{1}{p},p}(\partial\Omega)$ are such that $\|g_s - g_1\|_{W^{1-\frac{1}{p},p}(\partial\Omega)} \to 0$ as $s \to 1^-$. Then  $(g_s)_s$ asymptotically strongly converges to $g_1$. Indeed, by Lemma \ref{lem:frac-unif-bound-trace-bis} there is  constant $C=C(d,p,\Omega)>0$  such that
\begin{align*}
\|g_s-g_1\|_{W^{s-\frac{1}{p},p}(\partial\Omega)}
\leq C\|g_s-g_1\|_{W^{1-\frac{1}{p},p}(\partial\Omega)}\xrightarrow{s\to 1^-}0.
\end{align*}
We are now in a position to state our first main result.
\begin{theorem}
\label{thm:opti-conv-dirch-frac-regio}
Assume $\Omega\subset \R^d$ is open bounded  Lipschitz and $1<p<\infty$.
Let  $ (f_s)_s$ with  $f_s\in  (W^{s,p}_{0}(\Omega))'$ converge asymptotically weakly to $f_1\in (W^{1,p}_0(\Omega))'$ and $ (g_s)_s$ with $g_s\in  W^{s, p}(\Omega)$ converge asymptotically strongly to $g_1\in W^{1,p}(\Omega)$.
Let $u_s\in W^{s,p}(\Omega)$ with  $\frac{1}{p}<s\leq 1$, be the  weak solution of  the Dirichlet problem \eqref{eq:main-prob-regio-frac}.
\noindent Then the sequence $(u_s)_s$ strongly converges to $u_1$ in the optimal sense, that is
\begin{align*}
\lim_{s\to1^-}\|u_s -u_1\|_{W^{s,p}(\Omega)} =0.
\end{align*}
\end{theorem}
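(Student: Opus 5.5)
We reduce to homogeneous boundary data and combine uniform energy bounds, Bourgain--Brezis--Mironescu type compactness, a $\Gamma$-convergence/liminf argument, and convergence of energies. Since in the statement $g_s\in W^{s,p}(\Omega)$ converges asymptotically strongly to $g_1\in W^{1,p}(\Omega)$, i.e.\ $\|g_s-g_1\|_{W^{s,p}(\Omega)}\to0$, we write $u_s=w_s+g_s$ with $w_s\in W^{s,p}_0(\Omega)$. Then $w_s$ is the unique minimizer over $W^{s,p}_0(\Omega)$ of
\begin{align*}
J_s(w)=\frac{1}{p}\mathcal{E}_s(w+g_s)-\langle f_s,w\rangle_s,
\end{align*}
where $\mathcal{E}_s(v)$ is the energy associated with $(-\Delta)^s_{p,\Omega}$. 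By the normalization \eqref{eq:asymp-normaliz-pconst}, $\mathcal{E}_s(v)=c_s[v]^p_{W^{s,p}(\Omega)}$ with $c_s\to c_1$, which matches the $s=1$ normalization through $K_{d,p}$. Because of $g_s\to g_1$, it suffices to show $\|w_s-w_1\|_{W^{s,p}(\Omega)}\to0$. Here one needs the elementary fact that $\|g_s-g_1\|_{W^{s,p}}\to0$ with $g_1\in W^{1,p}$, together with $[g_1]_{W^{s,p}}\to[g_1]_{W^{1,p}}$ (BBM on Lipschitz domains), so the data behave well in every relevant norm.

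\textbf{Step 1 (uniform bounds).} Testing $J_s(w_s)\le J_s(0)$ and using $(F_1)$ together with the boundedness of $[g_s]_{W^{s,p}}$ gives
\begin{align*}
\sup_{s\in(s_0,1)}[w_s]_{W^{s,p}(\Omega)}<\infty.
\end{align*}
This requires a Poincaré inequality on $W^{s,p}_0(\Omega)$ uniform in $s$ near $1$ (for $sp>1$ on Lipschitz domains, which I take from the preliminaries) to absorb the linear term and control $\|w_s\|_{L^p}$.

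\textbf{Step 2 (compactness and identification of the limit).} The uniform BBM-type compactness yields $w_s\to w$ in $L^p(\Omega)$ along subsequences, with $w\in W^{1,p}(\Omega)$. One must also show $w\in W^{1,p}_0(\Omega)$. For this I would extend $w_s$ by zero: the uniform control of $[\cdot]_{W^{s,p}}$ of zero extensions for $sp>1$ via Hardy inequalities gives a bound on $\mathbb{R}^d$, or alternatively one uses the uniform trace Lemma \ref{lem:frac-unif-bound-trace-bis} at a fixed $\eta<s$ to pass $\gamma_0(w_s)=0$ to the limit.

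\textbf{Step 3 ($\Gamma$-convergence and minimality).} The liminf inequality $\liminf_s \mathcal{E}_s(w_s+g_s)\ge \mathcal{E}_1(w+g_1)$ follows from the BBM $\Gamma$-liminf for $v_s=w_s+g_s\to w+g_1$ in $L^p$. Combined with $(F_2)$, which gives $\langle f_s,w_s\rangle_s\to\langle f_1,w\rangle_1$, this yields
\begin{align*}
\liminf_s J_s(w_s)\ge J_1(w).
\end{align*}
For a recovery sequence for any $\phi\in C_c^\infty(\Omega)$, we take $\phi$ itself, since $\mathcal{E}_s(\phi+g_s)\to\mathcal{E}_1(\phi+g_1)$ by pointwise BBM plus the strong convergence of $g_s$. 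Density then gives $J_1(w)\le J_1(\phi)$ for all admissible $\phi$, so $w=w_1$ by uniqueness (strict convexity), and the whole family converges. Taking $\phi=w_1$, approximated, also gives
\begin{align*}
\lim_s [u_s]_{W^{s,p}}=[u_1]_{W^{1,p}}.
\end{align*}

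\textbf{Step 4 (from energy convergence to norm convergence).} This is the main obstacle, since $u_s-u_1$ lives in the varying norm. I would use a Clarkson-type (uniform convexity) argument in $W^{s,p}$. From $[u_s]_{W^{s,p}}\to[u_1]_{W^{1,p}}$, $[u_1]_{W^{s,p}}\to[u_1]_{W^{1,p}}$, and the liminf inequality applied to $\frac{u_s+u_1}{2}\to u_1$ in $L^p$,
\begin{align*}
\liminf_s\Big[\tfrac{u_s+u_1}{2}\Big]_{W^{s,p}}\ge[u_1]_{W^{1,p}}.
\end{align*}
Clarkson's inequalities for the $L^p$ norm on $\Omega\times\Omega$ with the weight $s(1-s)|x-y|^{-d-sp}$, for $p\ge2$ directly and via the modulus of uniform convexity for $1<p<2$, then force $[u_s-u_1]_{W^{s,p}}\to0$. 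Combined with the $L^p$ convergence, this gives $\|u_s-u_1\|_{W^{s,p}(\Omega)}\to0$.
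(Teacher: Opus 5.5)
Your proposal is correct. Steps 1--3 follow the paper closely: a uniform bound from the robust Poincar\'e--Friedrichs inequality on $W^{s,p}_0(\Omega)$, then asymptotic compactness with the trace condition passed to the limit at a fixed $\tau\in(\frac1p,1)$ (of your two options in Step 2, the trace one is what the paper does), then identification $u=u_1$ by the liminf inequality plus a recovery sequence. One small difference there: the paper first lifts boundary data in $W^{s-\frac1p,p}(\partial\Omega)$ to functions converging in $W^{s,p}(\Omega)$, whereas you take $\|g_s-g_1\|_{W^{s,p}(\Omega)}\to0$ directly from the statement.

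The genuine difference is Step 4. The paper never proves convergence of the energies. Instead it tests the equation with $u_s-u_1$ and obtains $\mathcal{E}^{s,p}_\Omega(u_s,u_s-u_1)\to0$, using $(F_2)$ and $|\mathcal{E}^{s,p}_\Omega(u_s,g_s-g_1)|\lesssim\|g_s-g_1\|_{W^{s,p}(\Omega)}$. It gets $\mathcal{E}^{s,p}_\Omega(u_1,u_s-u_1)\to0$ from the convergence of forms in Theorem \ref{thm:asymp-conv-reg-frac-form}. It then converts the difference of these two quantities into $\mathcal{E}^{s,p}_\Omega(u_s-u_1,u_s-u_1)\to0$ via the monotonicity comparison of Theorem \ref{thm:equiv-conv-in-form}.

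You instead extract $[u_s]_{W^{s,p}(\Omega)}\to[u_1]_{W^{1,p}(\Omega)}$ from convergence of the minimal values of $J_s$, after subtracting the linear terms, which converge by $(F_2)$. You then close with a Radon--Riesz argument in the varying spaces $L^p(\Omega\times\Omega,\mu_s)$, $\d\mu_s=s(1-s)|x-y|^{-d-sp}\d y\d x$. This is legitimate because Clarkson's inequalities hold in every $L^p(\mu)$ with constants depending only on $p$. Hence $[u_s]\to L$, $[u_1]_{W^{s,p}}\to L$ and $\liminf[\tfrac{u_s+u_1}{2}]_{W^{s,p}}\ge L$ force $[u_s-u_1]_{W^{s,p}(\Omega)}\to0$ (trivially if $L=0$).

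What each route buys:
\begin{itemize}
\item Your route needs only the BBM limit for the fixed function $u_1$ and the BBM liminf inequality. It avoids the convergence of the nonlinear forms $\mathcal{E}^{s,p}_\Omega(u_1,\cdot)$ and the $p$-dependent monotonicity inequalities for $|t|^{p-2}t$, which are awkward for $1<p<2$. It also gives energy convergence as a by-product, but it relies on the minimization structure.
\item The paper's final step uses only the weak formulation and $L^p$ convergence. That is what makes the quantitative estimate in the remark after the proof possible.
\end{itemize}
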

A notable difficulty in establishing the optimal convergence of weak solutions $(u_s)_s$ stems from the fact that the asymptotic behavior of the weak solutions to \eqref{eq:main-prob-regio-frac} near the boundary $\partial\Omega$ is particularly challenging to analyze. An essential tool to address this obstacle is the robust interpolation inequality established in \cite{Fog26rob}, which yields the compactness of $(u_s)_s$ in any $W^{\eta, p}(\Omega)$, especially for $\frac{1}{p}<\eta<1$; see for instance Theorem \ref{thm:asymp-compact-frac}. The compactness in the higher regularity regime, $\frac{1}{p}<\eta<1$, directly implies the compactness of the trace sequence $(\gamma^s_0(u_s))_s$ in $W^{\eta-\frac{1}{p}, p}(\partial \Omega)$. In the spirit of Theorem \ref{thm:opti-conv-dirch-frac-regio} we also  establish the stability of the Dirichlet eigenpairs associated with $(-\Delta)^s_{p,\Omega}$.
\begin{theorem}
\label{thm:optimal-conv-eigpair}
Let $\Omega \subset \R^d$ be an open bounded Lipschitz domain and $1 < p < \infty$.
Let  $(\lambda_s, \varphi_s) \in \R \times W^{s,p}_0(\Omega)$, $\frac{1}{p}<s<1,$ be a normalized Dirichlet eigenpairs of $(-\Delta)^s_{p,\Omega}$ in the weak sense, that is, $\|\varphi_s\|_{L^p(\Omega)} = 1$ and it satisfies
\begin{align*}
(-\Delta)^s_{p,\Omega} \varphi_s= \lambda_s|\varphi_s|^{p-2}\varphi_s\quad\text{in}\,\, \Omega\quad \text{and}\quad \gamma^s_0(\varphi_s)=0\quad\text{on}\,\, \partial\Omega.
\end{align*}
Then there exist a subsequence  $s_n \to 1^-$  and a normalized Dirichlet eigenpair  $(\lambda_1, \varphi_1)\in\R\times  W^{1,p}_0(\Omega)$ of
$-\Delta_p$ such that  as $n \to \infty$  we have
\begin{align*}
\lambda_{s_n} \to \lambda_1 \qquad \text{in } \R \quad \text{and}\quad \|\varphi_{s_n} - \varphi_1\|_{W^{s_n,p}(\Omega)} \to 0.
\end{align*}
\end{theorem}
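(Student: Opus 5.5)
We prove Theorem \ref{thm:optimal-conv-eigpair} in four steps: a uniform bound on the eigenvalues, compactness, passage to the limit in the weak formulation, and an energy-convergence argument that upgrades $L^p$ convergence to convergence in the moving norms.

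\textbf{Step 1: uniform bounds.} Testing the eigenvalue equation with $\varphi_s$ gives, up to normalization constants,
\begin{align*}
\lambda_s = \frac{C_{d,p,s}}{s(1-s)}\cdot\frac{1}{2}[\varphi_s]^p_{W^{s,p}(\Omega)},
\end{align*}
since $\|\varphi_s\|_{L^p(\Omega)}=1$. So $\lambda_s\ge 0$, and $\lambda_s$ is comparable to $[\varphi_s]^p_{W^{s,p}(\Omega)}$ uniformly in $s$, by \eqref{eq:asymp-normaliz-pconst}. A uniform upper bound on $[\varphi_s]_{W^{s,p}}$ does \emph{not} follow from the equation alone, because the eigenpairs are arbitrary. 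For the first eigenvalue one bounds $\lambda_s$ by the Rayleigh quotient of a fixed $\psi\in C_c^\infty(\Omega)$, and $[\psi]_{W^{s,p}}\to[\psi]_{W^{1,p}}$ by the BBM convergence. For general eigenpairs I would read the statement as implicitly assuming $\sup_{s}\lambda_s<\infty$ (for instance, $\lambda_s$ being variational eigenvalues of fixed index), or else establish it first. Either way, we obtain $\sup_{s\in(s_0,1)}\|\varphi_s\|_{W^{s,p}(\Omega)}<\infty$ and $\sup_s\lambda_s<\infty$.

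\textbf{Step 2: compactness.} Apply the asymptotic compactness result, Theorem \ref{thm:asymp-compact-frac}, which rests on the robust interpolation inequality of \cite{Fog26rob}. It yields a subsequence $s_n\to1^-$ and $\varphi_1\in W^{1,p}(\Omega)$ with $\varphi_{s_n}\to\varphi_1$ in $L^p(\Omega)$ and in $W^{\eta,p}(\Omega)$ for a fixed $\frac1p<\eta<1$. Passing to a further subsequence, $\lambda_{s_n}\to\lambda_1$. Continuity of the trace on $W^{\eta,p}$ gives $\gamma_0(\varphi_1)=\lim\gamma_0^{s_n}(\varphi_{s_n})=0$, so $\varphi_1\in W^{1,p}_0(\Omega)$. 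Moreover $\|\varphi_1\|_{L^p(\Omega)}=1$.

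\textbf{Step 3: the limit equation.} Fix $\psi\in C_c^\infty(\Omega)$. Let $\cE_s$ denote the energy $\frac1p\frac{C_{d,p,s}}{2s(1-s)}[\cdot]^p_{W^{s,p}}$, normalized so that $\cE_s\to\cE_1=\frac1p\|\nabla\cdot\|^p_{L^p}$ by \eqref{eq:asymp-normaliz-pconst}. Passing to the limit in the nonlinear weak form is the delicate point. I would use the monotonicity (Minty) trick:
\begin{align*}
\cE_s(\psi)-\cE_s(\varphi_s)\ \ge\ \lambda_s\int_\Omega|\varphi_s|^{p-2}\varphi_s(\psi-\varphi_s)\d x .
\end{align*}
This follows from convexity of $\cE_s$ together with the equation. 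Now let $s=s_n\to1$:
\begin{itemize*}
\item $\cE_{s}(\psi)\to\cE_1(\psi)$ by the BBM pointwise limit;
\item $\liminf\cE_s(\varphi_s)\ge\cE_1(\varphi_1)$ by the $\Gamma$-liminf inequality, which holds since $\varphi_s\to\varphi_1$ in $L^p$;
\item the right-hand side converges, because $|\varphi_s|^{p-2}\varphi_s\to|\varphi_1|^{p-2}\varphi_1$ in $L^{p'}$.
\end{itemize*}
Hence $\cE_1(\psi)-\cE_1(\varphi_1)\ge\lambda_1\int|\varphi_1|^{p-2}\varphi_1(\psi-\varphi_1)$ for all $\psi$, and by density for all $\psi\in W^{1,p}_0$. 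Replacing $\psi$ by $\varphi_1+t w$ and letting $t\to0^\pm$ gives the weak equation $-\Delta_p\varphi_1=\lambda_1|\varphi_1|^{p-2}\varphi_1$. So $(\lambda_1,\varphi_1)$ is a normalized eigenpair.

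\textbf{Step 4: convergence in the moving norm (main obstacle).} First, energies converge:
\begin{align*}
p\,\cE_s(\varphi_s)=\lambda_s\to\lambda_1=p\,\cE_1(\varphi_1),
\end{align*}
so $[\varphi_{s_n}]_{W^{s_n,p}}\to[\varphi_1]_{W^{1,p}}$. Second, $[\varphi_1]_{W^{s,p}}\to[\varphi_1]_{W^{1,p}}$ by BBM on the Lipschitz domain $\Omega$. Convergence of the norms alone does not give $[\varphi_{s}-\varphi_1]_{W^{s,p}}\to0$, because the space varies with $s$. I would use a Clarkson-type uniform convexity inequality applied to the midpoint $\frac{\varphi_s+\varphi_1}{2}$. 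Its $\liminf$ seminorm is at least $[\varphi_1]_{W^{1,p}}$, by the $\Gamma$-liminf along $L^p$ convergence (Ponce/BBM lower semicontinuity). For $p\ge2$, Clarkson's inequality in $L^p(\Omega\times\Omega,\mu_s)$, with $\mu_s$ the weighted kernel measure, gives
\begin{align*}
\Big[\tfrac{\varphi_s-\varphi_1}{2}\Big]^p_{W^{s,p}}\le\tfrac12[\varphi_s]^p_{W^{s,p}}+\tfrac12[\varphi_1]^p_{W^{s,p}}-\Big[\tfrac{\varphi_s+\varphi_1}{2}\Big]^p_{W^{s,p}}.
\end{align*}
The right-hand side has $\limsup\le0$. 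For $1<p<2$, the analogous Clarkson inequality with exponent $p'$ gives the same conclusion.

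Together with the $L^p$ convergence from Step 2, this yields $\|\varphi_{s_n}-\varphi_1\|_{W^{s_n,p}}\to0$. The step I expect to be hardest is the uniform eigenvalue bound in Step 1 for arbitrary eigenpairs, if it is not implicitly assumed. The uniform-convexity argument in Step 4 should be routine once the $\Gamma$-liminf is available.
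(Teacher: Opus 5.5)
Your proof is correct, given $\sup_s\lambda_s<\infty$ (see the second paragraph), but it is organized differently from the paper's.

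\textbf{The paper's route.} It does not pass to the limit in the eigenvalue equation directly. It extracts $s_n$ with $\lambda_{s_n}\to\lambda_1$ and $|\varphi_{s_n}|^{p-2}\varphi_{s_n}\rightharpoonup f_1$ weakly in $L^{p'}(\Omega)$. It then notes that the right-hand sides $\lambda_{s_n}|\varphi_{s_n}|^{p-2}\varphi_{s_n}$ converge asymptotically weakly to $\lambda_1 f_1$, and applies Theorem~\ref{thm:opti-conv-dirch-frac-regio} with zero boundary data. The only eigenvalue-specific step is $f_1=|\varphi_1|^{p-2}\varphi_1$, which follows from strong $L^p$ convergence. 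Inside Theorem~\ref{thm:opti-conv-dirch-frac-regio}, the limit is identified by a $\Gamma$-convergence/minimization argument, which plays the role of your Minty inequality. Convergence in the moving norm comes from three ingredients:
\begin{itemize}
\item $\cE^{s,p}_\Omega(u_s,u_s-u_1)\to 0$;
\item $\cE^{s,p}_\Omega(u_1,u_s-u_1)\to 0$, via Theorem~\ref{thm:asymp-conv-reg-frac-form};
\item the monotonicity estimate of Theorem~\ref{thm:equiv-conv-in-form}.
\end{itemize}

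\textbf{Your route.} Your Step~4 instead uses that for eigenpairs the energy equals the eigenvalue. So $\cE^{s,p}_\Omega(\varphi_s,\varphi_s)=\lambda_s\to\lambda_1=\cE^{1,p}_\Omega(\varphi_1,\varphi_1)$ comes for free once the limit equation is known. You then upgrade to $[\varphi_{s_n}-\varphi_1]_{W^{s_n,p}(\Omega)}\to 0$ using Clarkson's inequalities in $L^p(\Omega\times\Omega,\mu_s)$, whose constants do not depend on $s$. This is combined with the $\liminf$ inequality for the midpoints and BBM for the fixed $\varphi_1$.

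\textbf{What each buys.} Your argument is self-contained and avoids the monotonicity estimate. Its linear lower bound, as stated in Theorem~\ref{thm:equiv-conv-in-form}, is only available in a H\"older-power form when $1<p<2$, although that still suffices for convergence to zero. The paper's route is shorter once Theorem~\ref{thm:opti-conv-dirch-frac-regio} is available, and it covers general data, where no a priori energy identity is at hand.

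\textbf{On your Step~1 caveat.} The caveat is justified, and it is a defect of the statement rather than of your argument. The paper obtains $\lambda_s\le C_\varphi$ by asserting that every eigenfunction minimizes the Rayleigh quotient, which is true only for the first eigenpair. For arbitrary eigenpairs the theorem fails. Already for $p=2$, the Dirichlet spectrum of $(-\Delta)^s_{2,\Omega}$ is unbounded for each fixed $s$, by the compact embedding $W^{s,2}_0(\Omega)\hookrightarrow L^2(\Omega)$. One can therefore choose eigenpairs with $\lambda_s\ge (1-s)^{-1}$, and no subsequence of $(\lambda_s)$ converges. Both proofs thus need $\sup_s\lambda_s<\infty$. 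This holds, for example, for first eigenpairs or for variational eigenvalues of a fixed index, by testing with a fixed finite-dimensional subspace of $C_c^\infty(\Omega)$.
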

\smallskip

The proof of the optimal convergences in Theorem \ref{thm:opti-conv-dirch-frac-regio} and Theorem \ref{thm:optimal-conv-eigpair}  require several key ingredients. The first of these is the study of the robustness of the trace operator $\gamma^s_0$ as $s\to 1^-$, and consequently, the asymptotics of the fractional space $W^{s-\frac{1}{p},p}(\partial\Omega)$. In fact, a further objective of this work is to study the robustness of the nonlocal trace spaces
$W^{s-\frac{1}{p},p}(\partial \Omega)$ in the limit $s \to 1^-$.  Specifically, in the spirit of  \cite{BBM01,Pon04} we  establish in Theorem \ref{thm:asymp-compact-frac-trace} the asymptotic compactness $(W^{s-\frac{1}{p},p}(\partial \Omega))_s$. Namely for $(g_s)_s$ such that $g_s\in W^{s-\frac{1}{p},p}(\partial \Omega)$ if $\sup_{s\in (\frac{1}{p},1)}\|g_s\|_{W^{s-\frac{1}{p},p}(\partial \Omega)}<\infty$ then there is $g\in W^{1-\frac{1}{p},p}(\partial \Omega)$ and subsequence $s_n\to 1$ such that $\|g_{s_n}-g\|_{W^{\tau-\frac{1}{p},p}(\partial \Omega)}\xrightarrow{s_n\to 1^- }0$ for all $\frac{1}{p}<\tau<1$.
In addition, we obtain in Theorem \ref{thm:charac-frac-trace} another characterization of $W^{1-\frac{1}{p},p}(\partial\Omega)$. Namely that, a function $g \in L^p(\partial\Omega)$ is belongs to $W^{1-\frac{1}{p},p}(\partial \Omega)$ provided that
\begin{align*}
\liminf_{s \to 1^-} \|g\|_{W^{s-\frac{1}{p},p}(\partial \Omega)} < \infty.
\end{align*}
Another essential ingredient is the robust Friedrichs-Poincar\'e inequality. We take this opportunity to establish in Section \ref{sec:gene-fract-poin-ineq} several variants of these robust Poincar\'e type inequalities, some of which may be of independent interest. In particular, we show in Theorem \ref{thm:rob-friedrichs} that for a given subset $\Gamma_0 \subset \partial\Omega$ with positive Hausdorff measure, if $\Omega$ is a bounded, connected Lipschitz domain, then there exist two constant $s_0 = s_0(d, p, \Omega, \Gamma_0) \in (\frac{1}{p}, 1)$ and $C = C(d, p, \Omega, \Gamma_0) > 0$ such that,
\begin{align*}
\|u\|_{L^p(\Omega)} \leq C [u]_{W^{s,p}(\Omega)} + C \Big| \int_{\Gamma_0} \gamma^s_0(u )\d\sigma \Big|\quad \text{for all $s \in (s_0, 1)$, $u \in W^{s,p}(\Omega)$}.
\end{align*}
Thus, by  Theorem \ref{thm:rob-friedrichs-zero},
if we set $W^{s,p}_{\Gamma_0}(\Omega) :=
\{ u \in W^{s,p}(\Omega) : \gamma^s_0(u)|_{\Gamma_0} = 0 \}$,  then
\begin{align*}
\|u\|_{L^p(\Omega)} \leq C [u]_{W^{s,p}(\Omega)}\quad \text{for all $s \in (s_0, 1)$, $u \in  W^{s,p}_{\Gamma_0}(\Omega)$}.
\end{align*}

It is worth emphasizing that, see Theorem \ref{thm:robust-poinca-fried-regio}, the connectedness assumption on $\Omega$ can be omitted in the special case where $\Gamma_0 = \partial\Omega$.
To obtain, these various Poincare inequalities we benefit from the asymptotic compactness and the robust interpolation inequality established in \cite{Fog26rob}.
Finally, we also establish the convergence of the forms $\cE^{s,p}_\Omega(\cdot,\cdot)$ and their associated energy functionals in the sense of $\Gamma$-convergence.
\medskip

 Let us  now provide a brief account of the literature.
To the best of our knowledge, the convergence of weak solutions to the Dirichlet problem associated with the regional fractional $p$-Laplacian $(-\Delta)^s_{p, \Omega}$ is new. The optimal convergence obtained  in Theorem \ref{thm:opti-conv-dirch-frac-regio} improves upon our previous finding \cite[Theorem 1.4]{Fog26rob}, where we only established the weaker convergence $\|u_s-u_1\|_{W^{\eta, p}(\Omega)}\to 0$ as $s\to 1^-$, for all $0\leq \eta<1$ under relaxed assumptions on $f_s$ and $ g_s$.
The convergence of weak solutions to the Dirichlet problem associated with the regional fractional $p$-Laplacian and even for regional operators of $p$-L\'evy type can be found in \cite{Fog25,Fog26rob}. An abundant amount of literature addresses the convergence of weak solutions for the full fractional $p$-Laplacian $(-\Delta)^s_p \equiv (-\Delta)^s_{p, \R^d}$.
For instance, in the setting $p=2$, the convergence rate for the Dirichlet problem is discussed in \cite{BuFe25}, the differentiability of the mapping $s\mapsto u_s$ is investigated in \cite{JSW25,JSW20}. We also refer the reader to \cite{Voi17,guy-thesis, FoKa24,Gru25} wherein the asymptotic for general nonlocal elliptic operators is treated.  For the general case $p\neq 2$, we refer the reader to the series of works \cite{BPS16,FeSa20,BS22,BO20,BO21,SaVe22} which study the convergence from nonlocal to local of weak solutions to homogeneous Dirichlet problems associated with the fractional $p$-Laplacian $(-\Delta)^s_p$, see also \cite{Voi17, guy-thesis,Fog25, Fog26} for the case of inhomogeneous Dirichlet problems associated with operators of L\'evy type.  The convergence of nonlocal Neumann problems associated with regional nonlocal  operators  of $p$-L\'evy type can be found in \cite{AMRT08, AMRT10,DLS15,Fog25,Fog26}.  Convergence  of weak solutions to nonlocal  problems with complement Neumann condition to the local ones is established in \cite{guy-thesis,FoKa24,GrHe24,Fog25,Fog26}.

The rest of this paper is organized as follows. In Section \ref{sec:miscellaneous}, we collect some necessary preliminaries results. Section \ref{sec:robust-trace} is devoted to the proof of the robust trace operator  and the  asymptotics of the trace space. In Section \ref{sec:gene-fract-poin-ineq} we establish various robust Poincar\'e inequalities. Finally in Section \ref{sec:opti-conv-regio-frac}, we establish our main optimal convergence results.

\section{Miscellaneous}\label{sec:miscellaneous}
In this section, we collect several useful auxiliary results.
\subsection{Pointwise convergence}
We begin by discussing the motivation behind the choice of the normalization constant $C_{d,s,p}$ introduced in \cite{Fog25}, and compare it with alternative constants considered in \cite{dTGCV21,DJS25}. In view of \eqref{eq:asymp-normaliz-pconst}, this specific choice of $C_{d,p,s}$ guarantees consistent asymptotic behavior near both $s=1$ and $s=0$. First, for $u \in C^2(B_1(x)) \cap L^\infty(\R^d)$ satisfying $|\nabla u(x)| \neq 0$ with $1 < p < 2$, we have (see \cite[Section 9]{Fog25}):
\begin{align*}
\lim_{s \to 1^-} C_{d,p,s} \pv \int_{\R^d} \frac{|u(x)-u(y)|^{p-2}(u(x)-u(y))}{|x-y|^{d+sp}} \d y = \Delta_p u(x).
\end{align*}
Likewise, at the nonlocal limit $s \to 0^+$, for $u \in C^\varepsilon(B_1(0)) \cap L^{p'}(\R^d)$ with $0 < \varepsilon \leq 1$ such that $u(x) \to 0$ as $|x| \to \infty$, it follows from \cite[Theorem 4.9]{Fog26rob} that:
\begin{align*}
\lim_{s \to 0^+} C_{d,2,\frac{sp}{2}} \pv \int_{\R^d} \frac{|u(x)-u(y)|^{p-2}(u(x)-u(y))}{|x-y|^{d+sp}} \d y = |u(x)|^{p-2} u(x).
\end{align*}
Moreover, this normalization yields, see \cite[Corollary 4.4]{Fog26rob}, the  Brezis--Bourgain--Mironescu \cite{BBM01} and Maz'ya--Shaposhnikova \cite{MS02} formulas:

\begin{align*}
\lim_{s \to 1^-} \frac{C_{d,p,s}}{2} \iint_{\R^d\times \R^d} \frac{|u(y)-u(x)|^p}{|x-y|^{d+sp}} \d y \, \d x &= \int_{\R^d}|\nabla u(x)|^p \d x \quad \text{for } u \in W^{1,p}(\R^d), \\[1ex]
\lim_{s \to 0^+} \frac{C_{d,2,\frac{sp}{2}}}{2} \iint_{\R^d\times \R^d} \frac{|u(y)-u(x)|^p}{|x-y|^{d+sp}} \d y \, \d x &= \int_{\R^d}|u(x)|^p \d x \quad \text{for } u \in \bigcup_{0 < s < 1} W^{s,p}(\R^d).
\end{align*}
According to \cite{Fog26, Fog23},  if $\Omega$ is a $W^{1,p}$-extension domain then for all $u,v \in W^{1,p}(\Omega)$,
\begin{align}\label{eq:xasymp-form}
\begin{split}
\lim_{s\to 1^-} \cE^{s,p}_{\Omega}(u,v) = \cE^{1,p}_{\Omega}(u,v),\qquad 1<p<\infty,\\
\lim_{s\to 1^-} \cE^{s,p}_{\Omega}(u,u) = \cE^{1,p}_{\Omega}(u,u),\qquad 1\leq p<\infty,
\end{split}
\end{align}
where $\cE^{s,p}_\Omega(\cdot,\cdot)$ is the form associated with $(-\Delta)_{p,\Omega}^s$ are defined by
\begin{align*}
\cE^{s,p}_{\Omega}(u,v) &= \frac{C_{d,p,s}}{2} \iint_{\Omega \times \Omega} \frac{|u(y)-u(x)|^{p-2}(u(y)-u(x))}{|x-y|^{d+sp}} (v(y)-v(x)) \d y \, \d x, \\
\cE^{1,p}_{\Omega}(u,v) &= \int_\Omega |\nabla u(x)|^{p-2}\nabla u(x) \cdot \nabla v(x) \d x.
\end{align*}
We emphasize that according to \cite[Theorem 4.6]{Fog26rob}, for any open set $\Omega \subset \R^d$ and any function $u \in \bigcup_{0<s<1} \left( W^{s,p}(\Omega) \cap L^p(\Omega, \delta_x^{-sp}) \right)$ with $\delta_x = \operatorname{dist}(x,\partial\Omega)$ and $1 \leq p < \infty$, the global profile naturally encodes the geometry of the domain:
\begin{align*}
\lim_{s\to0^+} \frac{C_{d,2,\frac{sp}{2}}}{2} \iint_{\Omega\times\Omega} \frac{|u(x)-u(y)|^p}{|x-y|^{d+sp}} \d y \, \d x = \int_\Omega \left(1-\frac{|S_\Omega(x)|}{|\mathbb{S}^{d-1}|}\right) |u(x)|^p \d x,
\end{align*}
where $S_\Omega(x) = \big\{w \in \mathbb{S}^{d-1}: \lim\limits_{r\to\infty} \mathbf{1}_{\Omega^c}(x+rw)=1 \big\}$ is the escape sector at $x \in \Omega$.

\subsection{Robust interpolation inequalities}
An open set $\Omega \subset \R^d$ is a robust $W^{s,p}$-extension domain  if there exists a  linear operator $E: L^p(\Omega) \to L^p(\R^d)$ and a positive constant $C = C(d, p, \Omega) > 0$ such that: $Eu\vert_{\Omega} = u \quad \text{a.e. in } \Omega$, for every $u \in L^p(\Omega)$, and  for every $s \in [0, 1]$ we have
\begin{align*}
\|Eu\|_{W^{s,p}(\R^d)} \leq C \|u\|_{W^{s,p}(\Omega)}\qquad\text{ for all $u \in W^{s,p}(\Omega)$}.
\end{align*}
We need the following robust interpolation inequality, borrowed from \cite{Fog26rob}.
\begin{theorem}\label{thm:robust-interpo-dom}
 Assume that $\Omega \subset \R^d$ is a robust $W^{s,p}$-extension domain; this holds true, in particular, if $\Omega = \R^d$, $\Omega = \R^d_+$, or if $\partial\Omega$ is compact and Lipschitz. Let  $1\leq  p\leq \infty$. There is a positive constant  $C = C(d, p, \Omega)>0$ such that: if $0 \leq \sigma < \eta < s \leq 1$ with $\eta = \theta s + (1-\theta)\sigma,$ $\theta \in (0,1)$ and $u \in W^{s,p}(\Omega)$ then we have
\begin{align*}
\|u\|_{W^{\eta,p}(\Omega)} \leq C \|u\|_{W^{s,p}(\Omega)}^{\theta} \|u\|_{W^{\sigma,p}(\Omega)}^{1-\theta}.
\end{align*}
\end{theorem}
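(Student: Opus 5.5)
The plan is to reduce to the whole space and interpolate there. I take the robust extension operator $E$ from the definition and use, as a black box, the robust Fourier-side characterization of the normalized seminorm on $\R^d$. The normalization $s(1-s)$ is chosen precisely so that, uniformly in $s\in(0,1)$,
\begin{align*}
[v]^p_{W^{s,p}(\R^d)}\asymp \text{(a smoothness functional of order $s$ with constants independent of $s$)}.
\end{align*}
For $p=2$ this is exact: $s(1-s)\iint\frac{|v(x)-v(y)|^2}{|x-y|^{d+2s}}\d x\d y\asymp\int|\xi|^{2s}|\hat v|^2\d\xi$, with constants bounded above and below uniformly in $s$. Hölder's inequality in $\xi$ then gives $\|v\|_{W^{\eta,2}}\le \|v\|_{W^{s,2}}^\theta\|v\|_{W^{\sigma,2}}^{1-\theta}$.

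For general $p$ I would instead work with the robust $K$-functional and modulus-of-continuity description. Writing $\omega_p(v,t)=\sup_{|h|\le t}\|v(\cdot+h)-v\|_{L^p}$, polar coordinates give
\begin{align*}
s(1-s)\iint\frac{|v(x)-v(y)|^p}{|x-y|^{d+sp}}\asymp s(1-s)\int_0^\infty t^{-sp}\omega_p(v,t)^p\frac{dt}{t},
\end{align*}
up to a harmless averaging over directions. The key one-dimensional estimate I would prove is
\begin{align*}
\omega_p(v,t)\lesssim \min\{t^{s}\|v\|_{W^{s,p}},\ t^{\sigma}\|v\|_{W^{\sigma,p}}\},
\end{align*}
uniformly in $s$, using the weight $s(1-s)$ both to recover the $L^p$ part and to control the endpoint $s\to1$ via the Bourgain–Brezis–Mironescu type bound $\omega_p(v,t)\le t\|\nabla v\|_p$. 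I would then split the $t$-integral for $\eta$ at $T=(\|v\|_{W^{\sigma,p}}/\|v\|_{W^{s,p}})^{1/(s-\sigma)}$. The factor $\eta(1-\eta)$ in front of $\int_0^T t^{(s-\eta)p}dt/t\sim 1/(s-\eta)$ and of $\int_T^\infty t^{(\sigma-\eta)p}dt/t\sim1/(\eta-\sigma)$ produces the bound
\begin{align*}
\|v\|^p_{W^{s,p}}T^{(s-\eta)p}+\|v\|^p_{W^{\sigma,p}}T^{(\sigma-\eta)p}=2\|v\|_{W^{s,p}}^{\theta p}\|v\|_{W^{\sigma,p}}^{(1-\theta)p}.
\end{align*}

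\textbf{Main obstacle.} The splitting costs factors $\eta(1-\eta)/(s-\eta)$ and $\eta(1-\eta)/(\eta-\sigma)$, which blow up when $\eta$ is near $s$ or $\sigma$. To remove this I would not use a pointwise sup-bound on $\omega_p$. Instead I would keep the integrated weighted form and apply Hölder directly on the integral with weights $t^{-sp}$, $t^{-\sigma p}$. That is, since $t^{-\eta p}=(t^{-sp})^\theta(t^{-\sigma p})^{1-\theta}$, Hölder with exponents $1/\theta,1/(1-\theta)$ gives exact log-convexity of $\int t^{-\alpha p}\omega_p^p\,dt/t$ in $\alpha$. The remaining prefactor is $\eta(1-\eta)/\big((s(1-s))^\theta(\sigma(1-\sigma))^{1-\theta}\big)$, which is bounded since $\alpha\mapsto\log(\alpha(1-\alpha))$ is concave.

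The cases $\sigma=0$ and $s=1$ need the endpoint identifications, $L^p$ and the gradient, via the $s\to0,1$ limits quoted earlier, and would be handled by a limiting argument. Restricting back to $\Omega$ with $\|u\|_{W^{\eta,p}(\Omega)}\le\|Eu\|_{W^{\eta,p}(\R^d)}$ and applying the extension bounds at $s$ and $\sigma$ then concludes. The main remaining risk is the $\omega_p$ comparability with constants uniform in $s$ for $p\ne2$, since the directional averaging there needs care.
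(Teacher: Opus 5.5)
There is a genuine gap, and it sits exactly where you claim the prefactor is bounded.

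What is fine: your reduction to $\R^d$ is the paper's route. You extend with the single operator $E$, interpolate on $\R^d$, and restrict using $\|u\|_{W^{\eta,p}(\Omega)}\le\|Eu\|_{W^{\eta,p}(\R^d)}$; the paper likewise deduces the statement directly from the whole-space seminorm inequality, Theorem \ref{thm:robust-interpo-esti}. Your Fourier argument for $p=2$ is also correct and robust.

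Where it fails: the whole-space step for $p\neq 2$. Hölder with $|x-y|^{-d-\eta p}=(|x-y|^{-d-sp})^{\theta}(|x-y|^{-d-\sigma p})^{1-\theta}$ does give $I(\eta)\le I(s)^\theta I(\sigma)^{1-\theta}$ for the unnormalized integrals $I(\alpha)=\iint|v(x)-v(y)|^p|x-y|^{-d-\alpha p}\,dx\,dy$, so $\omega_p$ is not even needed. But the constant you then pay is
\[
R=\frac{\eta(1-\eta)}{(s(1-s))^{\theta}(\sigma(1-\sigma))^{1-\theta}},
\]
and it is not bounded. Concavity of $\alpha\mapsto\log(\alpha(1-\alpha))$ gives $\eta(1-\eta)\ge (s(1-s))^\theta(\sigma(1-\sigma))^{1-\theta}$, i.e. $R\ge 1$, which is the wrong direction.
\begin{itemize}
\item Take $\eta=\frac12$ and $\sigma=1-s$. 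Then $\theta=\frac12$ and $R=\frac{1}{4s(1-s)}\to\infty$ as $s\to1^-$.
\item At $\sigma=0$ the bound is void, since $I(0)=\infty$ for every $v\neq0$.
\end{itemize}
These are precisely the regimes in which the theorem is used: $\sigma=0$ in Lemma \ref{lem:frac-unif-bound-trace-bis}, and $s\to1^-$ throughout. Because the constant degenerates, a limiting argument at the endpoints cannot repair it.

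Why Hölder alone cannot work: write $I(\alpha)=\int_0^\infty t^{-\alpha p}\Phi(t)\frac{dt}{t}$ with $\Phi(t)=\int_{\mathbb{S}^{d-1}}\|v(\cdot+t\omega)-v\|^p_{L^p}\,d\sigma(\omega)$. Hölder uses nothing about the profile $\Phi$. For a nonnegative weight concentrated at a single scale $t_0$ it is an equality, $I(\alpha)\propto t_0^{-\alpha p}$. The normalized inequality would then force $\eta(1-\eta)\le C(s(1-s))^\theta(\sigma(1-\sigma))^{1-\theta}$, which is false. So any proof must use the structure of $\Phi$.

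The missing idea: your crude splitting was the right start, but you should compare integrated pieces at a common radius $r$ across orders, not insert pointwise bounds on $\omega_p$. Set $S_\alpha(r)=\int_0^r t^{-\alpha p}\Phi(t)\frac{dt}{t}$ and $T_\alpha(r)=\int_r^\infty t^{-\alpha p}\Phi(t)\frac{dt}{t}$. You need comparisons with constants independent of the orders:
\[
(1-\eta)S_\eta(r)\le C\,r^{(s-\eta)p}(1-s)S_s(r),\qquad \eta\,T_\eta(r)\le C\,r^{(\sigma-\eta)p}\,\sigma\,T_\sigma(r).
\]
These are monotone-likelihood-ratio statements. They concern the probability laws $\propto t^{(1-\alpha)p-1}dt$ on $(0,r)$ and $\propto t^{-\alpha p-1}dt$ on $(r,\infty)$, and they use almost-monotonicity of $\Phi(t)/t^p$ and of $\Phi$, respectively, in a suitable averaged sense. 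For the tail with $p=2$ this is exactly what Lemma \ref{lem:tail-short-cos} and Theorem \ref{thm:r-split-monot-semin-l2} do, via the $2$-almost decreasing kernel $K(\cdot,\xi)$. The general-$p$ whole-space inequality is imported from \cite{Fog26rob}.

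Multiplying the two comparisons by $\eta\le s$ and $1-\eta\le 1-\sigma$, respectively, gives
\[
[v]^p_{W^{\eta,p}}\le C r^{(s-\eta)p}[v]^p_{W^{s,p}}+Cr^{(\sigma-\eta)p}[v]^p_{W^{\sigma,p}}.
\]
Choosing $r^{s-\sigma}=[v]_{W^{\sigma,p}}/[v]_{W^{s,p}}$ then yields the robust bound, with no $1/(s-\eta)$ or $1/(\eta-\sigma)$ loss.
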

Theorem \ref{thm:robust-interpo-dom} is a direct consequence of the following one.

\begin{theorem}
\label{thm:robust-interpo-esti}
Let $u\in L^p(\mathbb{R}^d)$ with $1\leq p\leq \infty$ and let $0\leq \sigma<\eta<s\leq 1$ say $\eta=\theta s+(1-\theta)\sigma,$ with $\theta=\frac{\eta-\sigma}{s-\sigma}\in(0,1).$ Then the following the estimate holds
\begin{align*}
[u]_{W^{\eta,p}(\R^d)}
&\leq
2^{1+\frac{2}{p}}
\,[u]_{W^{s,p}(\R^d)}^{\theta}
\cdot [u]_{W^{\sigma,p}(\R^d)}^{1-\theta}.
\end{align*}
\end{theorem}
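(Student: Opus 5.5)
The plan is to reduce everything to one-dimensional integrals of the modulus of continuity and then optimise a cut-off radius. For $h\in\R^d$ let $\omega(h):=\|u(\cdot+h)-u\|_{L^p(\R^d)}^p$. Two elementary facts will be used repeatedly: $\omega(h)\le 2^p\|u\|_{L^p(\R^d)}^p$, and the triangle inequality $\omega(h_1+h_2)^{1/p}\le \omega(h_1)^{1/p}+\omega(h_2)^{1/p}$, which gives in particular $\omega(2h)\le 2^p\omega(h)$. By Fubini, for $0<\tau<1$,
\begin{align*}
[u]^p_{W^{\tau,p}(\R^d)}=\tau(1-\tau)\int_{\R^d}\frac{\omega(h)}{|h|^{d+\tau p}}\d h .
\end{align*}
At the endpoints, $\tau=1$ is tied to $\omega(h)\le |h|^p\|\nabla u\|^p_{L^p}$ after averaging over directions, which is where $K_{d,p}$ appears, and $\tau=0$ to the $L^p$ bound above. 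The first step will be to check, using the normalizations of the seminorms at $s=0$ and $s=1$, that these endpoint cases fit the same scheme. This can be done either directly, or by letting $\sigma\to0^+$ and $s\to1^-$ via the Bourgain--Brezis--Mironescu and Maz'ya--Shaposhnikova formulas recalled above.

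The naive step is to split the $h$-integral at a radius $R>0$. On $\{|h|<R\}$ we write $|h|^{-\eta p}\le R^{(s-\eta)p}|h|^{-sp}$, and on $\{|h|\ge R\}$ we write $|h|^{-\eta p}\le R^{-(\eta-\sigma)p}|h|^{-\sigma p}$. This yields
\begin{align*}
[u]^p_{W^{\eta,p}}\le \frac{\eta(1-\eta)}{s(1-s)}R^{(s-\eta)p}[u]^p_{W^{s,p}}+\frac{\eta(1-\eta)}{\sigma(1-\sigma)}R^{-(\eta-\sigma)p}[u]^p_{W^{\sigma,p}},
\end{align*}
and choosing $R$ to balance the two terms gives exactly the exponents $\theta$ and $1-\theta$. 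However, the prefactors $\eta(1-\eta)/(s(1-s))$ and $\eta(1-\eta)/(\sigma(1-\sigma))$ are not uniformly bounded as $s\to1$ or $\sigma\to0$. Removing them is the heart of the matter.

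To fix this, I would first prove a \emph{robust Campanato/Lipschitz-type lemma}. It bounds the sup-quantity $M_\tau:=\sup_{r>0} r^{-\tau p}\,\overline{\omega}(r)$, where $\overline\omega(r)$ is the spherical average of $\omega$ over $|h|=r$, in the form
\begin{align*}
M_\tau\le c\,\frac{[u]^p_{W^{\tau,p}}}{\tau(1-\tau)}\cdot \tau(1-\tau)\text{-free factor}.
\end{align*}
More precisely, the target is $M_s\lesssim [u]^p_{W^{s,p}}$ uniformly as $s\to1$, and dually a tail estimate that is uniform as $\sigma\to0$. The proof of the lemma uses the doubling property $\omega(2h)\le2^p\omega(h)$, iterated dyadically. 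This iteration gives a geometric series $\sum_k 2^{-k(1-s)p}\approx 1/((1-s)p)$, and this factor exactly compensates the $1-s$ in the normalization. The same mechanism with $2^{-k\sigma p}$ compensates the $\sigma$ at the other end. Having $M_s$ and $M_\sigma$ in hand, I would redo the split with $\overline\omega(r)\le r^{sp}M_s$ for $r<R$ and $\overline\omega(r)\le r^{\sigma p}M_\sigma$ for $r\ge R$. The radial integrals then produce $\eta(1-\eta)\big[\frac{R^{(s-\eta)p}}{(s-\eta)p}M_s+\frac{R^{-(\eta-\sigma)p}}{(\eta-\sigma)p}M_\sigma\big]$, which I would optimise in $R$.

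The main obstacle is this last bookkeeping. The factors $1/(s-\eta)$ and $1/(\eta-\sigma)$ must be absorbed in such a way that the final constant is the absolute $2^{1+\frac2p}$, independent of $\eta$, $s$, $\sigma$ and $d$. My first attempt would be to avoid sup-quantities altogether. Instead I would use the dyadic decomposition $[u]^p_{W^{\tau,p}}\asymp \tau(1-\tau)\sum_k 2^{-k\tau p}\overline\omega(2^k)$ with two-sided constants controlled by doubling, which produces powers of $2$ that should combine to $2^{p+2}$ before taking $p$-th roots. I would then apply Hölder's inequality in the dyadic sum with exponents $1/\theta$ and $1/(1-\theta)$, splitting $2^{-k\eta p}=(2^{-ksp})^\theta(2^{-k\sigma p})^{1-\theta}$. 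This gives the product structure directly, and the factor $\eta(1-\eta)\le (s(1-s))^\theta(\sigma(1-\sigma))^{1-\theta}\cdot(\text{bounded})$ must be checked separately. If that concavity-type inequality fails near the endpoints, I would fall back on the cut-off argument above.
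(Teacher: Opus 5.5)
There is a genuine gap, and it sits exactly where you locate the ``main obstacle'': neither of your two ways of removing the non-uniform prefactors works.

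\emph{The H\"older route cannot succeed.} H\"older's inequality, whether in the dyadic sum or directly in $h$, only uses the log-convexity of $\tau\mapsto\int_{\R^d}\omega(h)|h|^{-d-\tau p}\d h$. It gives
\[
[u]^p_{W^{\eta,p}(\R^d)}\le \frac{\eta(1-\eta)}{(s(1-s))^{\theta}(\sigma(1-\sigma))^{1-\theta}}\,[u]^{\theta p}_{W^{s,p}(\R^d)}\,[u]^{(1-\theta)p}_{W^{\sigma,p}(\R^d)}.
\]
Since $\tau\mapsto\log(\tau(1-\tau))$ is concave, this prefactor is always $\geq 1$. So your concavity-type inequality holds in the reverse direction. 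Moreover, the prefactor tends to $\infty$ as $\sigma\to0^+$ or $s\to1^-$ with $\eta$ fixed. No refinement can rescue this. Any argument valid for arbitrary nonnegative radial profiles must fail, because for a profile concentrated near a single radius the claimed inequality is false for exactly this reason.

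\emph{The sup-based cut-off fails as well.} Insert $\overline{\omega}(t)\le t^{sp}M_s$ on $(0,R)$ and $\overline{\omega}(t)\le t^{\sigma p}M_\sigma$ on $(R,\infty)$, then optimize in $R$. This yields the prefactor $\eta(1-\eta)\frac{s-\sigma}{p(s-\eta)(\eta-\sigma)}$, which blows up as $\eta\to s$ or $\eta\to\sigma$. Passing to a supremum throws away precisely what makes the case $\eta\approx s$ trivial.

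\emph{Your sketch of the sup-lemma is also incomplete.} Doubling only gives lower bounds for $\overline{\omega}$ at the points $2^{-k}r$, not on whole dyadic blocks. For $d\ge 2$ this can be repaired by averaging over rotations. For $d=1$, however, the quotient $\overline{\omega}(t)/t^p$ is genuinely not almost decreasing: take a long chain of $N$ equally spaced narrow bumps at distance $L$ and compare $t=L$ with $t=\frac{3}{2}L$. One has to work instead with an average such as $t^{-p-1}\int_0^t\overline{\omega}$, which is $2^{p+1}$-almost decreasing.

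\emph{What is missing} is a comparison on each side of the cut that loses only an absolute factor. On $(0,R)$ the $\eta$-integral must be compared with the $s$-integral, and on $(R,\infty)$ with the $\sigma$-integral, directly at the level of integrals. This appears to be the mechanism behind the result the paper borrows from \cite{Fog26rob}, as the $r$-split material in Section 2 indicates. There the seminorm is split at a radius $r$ into a short-range part and a normalized tail part. Each is compared with the corresponding part of order $s$, resp.\ $\sigma$, up to an absolute constant; compare Lemma \ref{lem:tail-short-cos} and Theorem \ref{thm:r-split-monot-semin-l2}, where for $p=2$ one gets $T_2(\eta,u,r)\le 2^{\frac12}T_2(\sigma,u,r)$ for $\sigma<\eta$. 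Then $r$ is optimized with no $\eta,s,\sigma$-dependent loss.

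Within your own framework, a case split also restores uniformity:
\begin{itemize}
\item on $(0,R)$, use the naive bound (factor $\le\frac{1-\eta}{1-s}$) when $1-s\ge\frac{1-\eta}{2}$, and the sup bound (factor $\lesssim_p\frac{1-\eta}{s-\eta}$) otherwise;
\item on $(R,\infty)$, choose in the same way between $\frac{\eta}{\sigma}$ and $\lesssim_p\frac{\eta}{\eta-\sigma}$.
\end{itemize}
For this you need the two robust sup-lemmas, proved via the averaged quantities above. Both coefficients are then bounded by a constant depending only on $p$, and the choice of $R$ gives the product form. The resulting constant, however, is not the stated $2^{1+\frac{2}{p}}$, and your plan does not produce that value.
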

We take this opportunity to  provide an answer to the  open question from \cite[Section 5.4]{Fog26rob} in the case $p=2$ by establishing an alternative proof to \cite[Lemma 5.2]{Fog26rob}.
\begin{lemma}\label{lem:tail-short-cos}
Let  $\xi\in\R$ and $0\leq \sigma\leq \eta<1$. The following estimates hold true
\begin{align*}
\eta \int_1^\infty \frac{(1-\cos (\xi t))}{t^{1+2\eta}}\d t
&\leq 2\sigma  \int_1^\infty \frac{(1-\cos (\xi t))}{t^{1+2\sigma}}\d t.
\end{align*}
\end{lemma}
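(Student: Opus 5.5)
The plan is to pass to a probabilistic reading of the two sides and then prove a comparison between the means of the bounded oscillating function $h(t)=1-\cos(\xi t)$ under two Pareto laws. By symmetry we may assume $\xi>0$; the case $\xi=0$ is trivial. We first take $\sigma>0$. The case $\sigma=0$ is read as the limit $\sigma\to0^+$ of the right-hand side. Since $\sigma\int_1^\infty h(t)t^{-1-2\sigma}\d t\to \tfrac12\lim_{T\to\infty}\frac{1}{\log T}\int_1^T h(t)\frac{\d t}{t}=\tfrac12$, this case reduces to the bound $G(\eta)\le 1$, where
\begin{align*}
G(a):=a\int_1^\infty \frac{1-\cos(\xi t)}{t^{1+2a}}\d t .
\end{align*}
The bound $G(\eta)\le1$ holds because $h\le 2$ and $2a\int_1^\infty t^{-1-2a}\d t=1$.

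For $\sigma>0$ we use $1-\cos r=\int_0^r\sin\rho\,\d\rho$ and integrate by parts after the change of variables $r=\xi t$. This gives the exact identity
\begin{align*}
G(a)=\frac{1-\cos\xi}{2}+\frac12 S(a),\qquad S(a):=\xi^{2a}\int_\xi^\infty \frac{\sin r}{r^{2a}}\d r=\xi\int_1^\infty \frac{\sin(\xi t)}{t^{2a}}\d t ,
\end{align*}
where the integral converges conditionally for $a>0$. The claim $G(\eta)\le 2G(\sigma)$ is then equivalent to
\begin{align*}
S(\eta)\le (1-\cos\xi)+2S(\sigma).
\end{align*}
Two a priori facts are available. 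First, $G(a)\ge0$, so $S(a)\ge -(1-\cos\xi)$. Second, $G(a)\le 1$, so $S(a)\le 1+\cos\xi$.

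The remaining work is a case analysis in $\xi$.

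\textbf{Large $\xi$.} A second integration by parts gives $|S(a)|\le 2a\,\xi^{-1}\cdot\max(1,\ldots)$, more precisely $|S(a)-\cos\xi|\lesssim a/\xi$. Hence $G(a)=\tfrac12+O(a/\xi)$ uniformly in $a$. The factor $2$ then leaves ample room: $G(\eta)\le \tfrac12+C/\xi$ and $G(\sigma)\ge\tfrac12-C/\xi$ settle the case for $\xi\ge\xi_0$.

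\textbf{Small $\xi$.} Here we use $\frac{r^2}{2}-\frac{r^4}{24}\le1-\cos r\le\frac{r^2}{2}$ on $[0,\xi]$ together with the monotonicity of $a\mapsto \xi^{2a}\int_\xi^\infty(1-\cos r)r^{-1-2a}\d r$. The aim is to show that $G(a)$ is comparable, with explicit constants, to $a\xi^{2a}\big(\int_\xi^{1}r^{1-2a}\d r+\tfrac{1}{2a}\big)$. One then checks that this model expression is, up to a factor at most $2$, nondecreasing as $a$ decreases.

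The main obstacle is the intermediate range of $\xi$, and more generally keeping the constant exactly $2$ rather than some unspecified $C$. There I would try first to establish the monotonicity-type statement that $a\mapsto G(a)$ cannot increase by more than the factor allowed. My first attempt would use the representation in $u=\log t$:
\begin{align*}
G(a)=\tfrac12\int_0^\infty 2a e^{-2au}\,g(u)\d u,\qquad g(u)=1-\cos(\xi e^u)\in[0,2].
\end{align*}
I would split $g=(1-\cos\xi)+(g-(1-\cos\xi))$. The constant part contributes equally to both sides. For the oscillating part, whose primitive is exactly $S$, I would exploit that $\int_1^T \sin(\xi t)\,\xi\,\d t=\cos\xi-\cos(\xi T)$ stays in $[-1+\cos\xi,\,1+\cos\xi]$. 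Writing $t^{-2a}$ as a superposition of indicators $\mathbf 1_{[1,T]}$ with weight $2a T^{-1-2a}\d T$ gives
\begin{align*}
S(a)=\int_1^\infty 2aT^{-1-2a}\big(\cos\xi-\cos(\xi T)\big)\d T .
\end{align*}
The desired inequality should then follow by comparing these bounded averages for $a=\eta$ and $a=\sigma$.
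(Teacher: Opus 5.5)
Your preliminary steps are correct: the identity $G(a)=\frac{1-\cos\xi}{2}+\frac12 S(a)$, the reading of $\sigma=0$ as a limit, and the large-$\xi$ regime all hold. For the last one, two integrations by parts give $|G(a)-\tfrac12|\le 2a/\xi$, so $\xi\ge 12$ suffices.

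There is, however, a genuine gap: the proposal stops where the lemma has its real content.

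\emph{Small $\xi$.} The monotonicity you quote says that $a\mapsto\int_1^\infty(1-\cos(\xi t))t^{-1-2a}\d t$ is decreasing. This only gives $G(\eta)\le\frac{\eta}{\sigma}G(\sigma)$, which proves the claim only when $\eta\le 2\sigma$. Your planned two-sided comparison $c_1M(a)\le G(a)\le c_2M(a)$, with a model $M$ that is $2$-almost monotone, yields the constant $2c_2/c_1$ rather than $2$ unless $c_1=c_2$.

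\emph{Intermediate $\xi$.} Nothing is actually proved here. Your final representation $S(a)=\int_1^\infty 2aT^{-1-2a}(\cos\xi-\cos(\xi T))\d T$ is just $S(a)=2G(a)-(1-\cos\xi)$ rewritten. The only information you extract from it is $-(1-\cos\xi)\le S(a)\le 1+\cos\xi$. These bounds are compatible with $S(\eta)=1+\cos\xi$ and $S(\sigma)=-(1-\cos\xi)$, which would violate $S(\eta)\le(1-\cos\xi)+2S(\sigma)$. So boundedness of the averages cannot give the comparison.

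The structural obstruction is the integrand. The Pareto laws $2aT^{-1-2a}\d T$ are ordered by likelihood ratio, but $T\mapsto 1-\cos(\xi T)$ vanishes at every $T=2\pi k/\xi$. It is therefore not almost monotone, so no stochastic-ordering argument can be applied to it directly. The same problem appears in your $u=\log t$ representation: the exponential laws are ordered, but $g(u)=1-\cos(\xi e^u)$ oscillates.

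The missing idea, which is what the paper uses, is to smooth the oscillation by subordination. Write $t^{-1-2a}=\Gamma(1+2a)^{-1}\int_0^\infty x^{2a}e^{-xt}\d x$. Then $G(a)=\frac12\int_0^\infty K(x,\xi)\,\frac{x^{2a-1}e^{-x}}{\Gamma(2a)}\d x$ with
\begin{align*}
K(x,\xi)=1-\frac{x^2\cos\xi-x\xi\sin\xi}{x^2+\xi^2}.
\end{align*}
Two properties make this work:
\begin{itemize}
\item The Gamma$(2a,1)$ densities have likelihood ratio proportional to $x^{2(\eta-\sigma)}$, which is increasing. Hence $X_\eta$ stochastically dominates $X_\sigma$.
\item $K(\cdot,\xi)$ is $2$-almost decreasing, i.e.\ $K(y,\xi)\le 2K(x,\xi)$ for $x\le y$. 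This follows from elementary cases on the signs of $\cos\xi$ and $\sin\xi$, assuming $\xi\ge0$ by evenness.
\end{itemize}
Replace $K$ by its decreasing envelope $\widetilde K(x)=\sup_{y\ge x}K(y,\xi)$, which satisfies $K\le\widetilde K\le 2K$. Stochastic dominance applied to $\widetilde K$ then gives exactly the factor $2$, uniformly in $\xi$, with no small/large split in $|\xi|$.
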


\begin{proof}
Note that the Gamma family $(\Gamma(2\eta, \cdot))_\eta$ with  $\Gamma(2\eta, x) =\frac{x^{2\eta-1}}{\Gamma(2\eta)} e^{-x}\mathds{1}_{(0,\infty)}(x)$ of probability densities function of  $(2\eta,1)$-Gamma distributions $X_\eta$
possesses the monotone likelihood ratio property in $x$, i.e.,  for $\sigma<\eta$, the mapping
\begin{align*}
x\mapsto \frac{\Gamma(2\eta, x)}{\Gamma(2\sigma, x)}= \frac{\Gamma(2\sigma)}{\Gamma(2\eta)}x^{2(\eta-\sigma)}
\quad\text{is increasing on $(0,\infty)$}.
\end{align*}
Let us observe that
\begin{align*}
\eta \int_1^\infty \frac{(1-\cos (\xi t))}{t^{1+2\eta}}\d t
&= \eta \int_{1}^{\infty}  (1-\cos(\xi t))
\left( \frac{1}{\Gamma(1+2\eta)} \int_{0}^{\infty} x^{2\eta} e^{-xt} \, \d x \right)  \d t \\
&=  \frac{1}{2} \int_{0}^{\infty} \frac{x^{2\eta-1} }{\Gamma(2\eta)}  \left( \int_{1}^{\infty} (1-\cos(\xi t)) xe^{-xt} \, \d t \right)  \d x \\
&=  \frac{1}{2} \int_{0}^{\infty} K(x,\xi) \Gamma(2\eta, x)  \d x
\end{align*}
where we consider the kernel
\begin{align*}
K(x,\xi)=e^x \operatorname{Re}\left( \int_{1}^{\infty} (1-e^{i\xi t})  xe^{-xt} \, \d t \right) =
1- \frac{x^2\cos \xi -x\xi  \sin\xi}{x^2+\xi^2 }.
\end{align*}
The sought inequality becomes
\begin{align*}
\int_{0}^{\infty} K(x,\xi) \Gamma(2\eta, x)  \d x \leq2 \int_{0}^{\infty} K(x,\xi) \Gamma(2\sigma, x)  \d x.
\end{align*}
Therefore, according to 
\cite[Theorem 2.20]{Fog26rob} the desired estimate is valid once we show that, each $K(\cdot, \xi)$ is $2$-almost decreasing. That is, it remains to prove that
\begin{align*}
K(y,\xi)\leq 2 K(x,\xi)\qquad\text{ for all }\quad x\leq y.
\end{align*}
\textbf{Case 1: $\cos\xi\ge 0$ and $\sin\xi\ge 0$.}
Note that Cauchy-Schwartz inequality yields $-\xi\sin\xi + x\cos\xi
\leq \sqrt{x^2+\xi^2}$. It follows that
\begin{align*}
\frac{x\xi\sin\xi}{x^2+\xi^2}
\leq
\frac{x}{\sqrt{x^2+\xi^2}}- \frac{x^2\cos\xi}{x^2+\xi^2}
\leq 1-\frac{x^2\cos\xi}{x^2+\xi^2}:= g(x,\xi).
\end{align*}
Since $\sin\xi\geq 0$ if follows that
\begin{align*}
g(x, \xi) \leq K(x,\xi)= g(x,\xi) +
\frac{x\xi\sin\xi}{x^2+\xi^2}\leq 2g(x, \xi)
\end{align*}

Therefore $K(\cdot,\xi)$ is $2$-almost decreasing, i.e.,  we have
\begin{align*}
K(y,\xi)\leq 2K(x,\xi)\qquad x\leq y.
\end{align*}
\textbf{Case 2: $\cos\xi\leq 0$ and $\sin\xi\geq 0$.}
Since $\cos\xi<0$ we have
\begin{align*}
1\leq 1- \frac{x^2\cos (\xi) -x\xi  \sin(\xi)}{x^2+\xi^2 }= K(x,\xi)\leq 1+ \frac{x}{\sqrt{x^2+\xi^2} }\leq 2.
\end{align*}
Therefore we have
\begin{align*}
K(x,\xi)\leq 2 K(y,\xi) \qquad x,y\in( 0, \infty).
\end{align*}

\textbf{Case 3: $\sin\xi\leq 0$.}
Since $\sin\xi\leq 0$, solving for $x$ the equation
\begin{align*}
\partial_xK(x,\xi)
=
-\frac{\xi}{(x^2+\xi^2)^2}
\Big[
x^2\sin\xi +2x\xi\cos\xi -\xi^2\sin\xi
\Big]= 0
\end{align*}
leads to  the unique global minimum point $x_0
=- \frac{ \xi(1+\cos \xi)}{\sin \xi}.$  Moreover, we have
\begin{align*}
K(0,\xi) = 1,\quad
K(x_0,\xi) = \frac{1-\cos\xi}{2},\quad \lim_{x\to\infty} K(x,\xi) = 1-\cos\xi.
\end{align*}
This implies that
for all $x,y\in (0,\infty)$ we have $K(y,\xi) \leq 2K(x,\xi) $ since
\begin{align*}
\frac{1-\cos\xi}{2}\leq K(x,\xi)\leq  1-\cos\xi\leq 2 K(y,\xi).
\end{align*}
Hence $K(\cdot,\xi)$ is $2$-almost decreasing, i.e.,
\begin{align*}
K(y,\xi)\leq 2K(x,\xi)\qquad\text{for all $0\leq x\leq y$}.
\end{align*}
\textbf{Case 4: $\cos\xi\leq 0$ and $\sin\xi\leq 0$.}
Using $0\leq \frac{x\xi}{x^2+\xi^2}\leq \frac{1}{2},$ we find that
\begin{align*}
\left|\frac{x\xi\sin\xi}{x^2+\xi^2}\right|
\leq \frac{1}{2} |\sin\xi|
\leq \frac{1}{2},
\end{align*}
It follows that
\begin{align*}
g(x,\xi)-\frac{1}{2}\leq K(x,\xi) = g(x,\xi) + \frac{x\xi\sin\xi}{x^2+\xi^2}\leq g(x,\xi).
\end{align*}
Since $\cos\xi\leq 0$,  $x\mapsto g(x,\xi)$ is increasing and we have
\begin{align*}
1=g(0,\xi)\leq g(x,\xi)
= 1-\frac{x^2\cos\xi}{x^2+\xi^2}
= 1-\cos\xi + \frac{\xi^2\cos\xi}{x^2+\xi^2}.
\end{align*}
Given that $g(x,\xi)\ge 1$, we obtain $g(x,\xi)-\frac{1}{2} \geq \frac{1}{2} g(x,\xi)$ and hence
\begin{align*}
\frac{1}{2} g(x,\xi)\leq K(x,\xi)\leq g(x,\xi).
\end{align*}
 This clearly implies $K(y,\xi)\leq 2K(x,\xi)$ for all $x,y\geq 0$.
\end{proof}
As direct consequence we obtain the following theorem.
\begin{theorem}
\label{thm:r-split-monot-semin-l2}
Let $u\in L^2(\R^d)$ and $r>0$. The following estimates  hold
\begin{align*}
T_2(\eta, u,r)&\leq 2^{\frac{1}{2}} T_2(\sigma , u,r), && 0\leq \sigma< \eta \leq 1
\end{align*}
with
\begin{align*}
T^2_2 (\eta, u,r)
&=\eta\, r^{2\eta}\int_r^\infty \int_{ \mathbb{S}^{d-1}} \int_{\R^d} | u(x+\omega t)-u(x)|^2\d x\d \sigma_{d-1}(\omega)  \frac{\d t}{t^{1+2\eta}}.
\end{align*}
\end{theorem}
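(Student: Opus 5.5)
The plan is to pass to Fourier variables via Plancherel, which reduces the estimate for $T_2$ to the one-dimensional inequality of Lemma \ref{lem:tail-short-cos}, applied for each fixed frequency.

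Fix $\omega\in\mathbb{S}^{d-1}$ and $t>0$. For $u\in L^2(\R^d)$, Plancherel gives
\begin{align*}
\int_{\R^d}|u(x+\omega t)-u(x)|^2\d x = (2\pi)^{-d}\int_{\R^d}|e^{i t\omega\cdot\zeta}-1|^2|\widehat u(\zeta)|^2\d\zeta = 2(2\pi)^{-d}\int_{\R^d}\big(1-\cos(t\,\omega\cdot\zeta)\big)|\widehat u(\zeta)|^2\d\zeta .
\end{align*}
All integrands are nonnegative, so Tonelli lets me write
\begin{align*}
T_2^2(\eta,u,r)= 2(2\pi)^{-d}\int_{\R^d}|\widehat u(\zeta)|^2\int_{\mathbb{S}^{d-1}} \Big(\eta\, r^{2\eta}\int_r^\infty \frac{1-\cos(t\,\omega\cdot\zeta)}{t^{1+2\eta}}\d t\Big)\d\sigma_{d-1}(\omega)\d\zeta .
\end{align*}
In the inner integral I substitute $t=r\tau$. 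This gives $\eta\int_1^\infty (1-\cos(\xi\tau))\tau^{-1-2\eta}\d\tau$ with $\xi=r\,\omega\cdot\zeta\in\R$, and the factor $r^{2\eta}$ cancels exactly. This cancellation is why the normalization $r^{2\eta}$ appears in $T_2$.

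For $0\le\sigma<\eta<1$, Lemma \ref{lem:tail-short-cos} bounds this inner quantity, for every real $\xi$, by $2\sigma\int_1^\infty(1-\cos(\xi\tau))\tau^{-1-2\sigma}\d\tau$. Integrating back over $\omega$ and $\zeta$ and undoing the change of variables gives $T_2^2(\eta,u,r)\le 2\,T_2^2(\sigma,u,r)$. Taking square roots yields the factor $2^{1/2}$.

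Two boundary cases need a separate check. When $\sigma=0$, the right-hand side of the lemma vanishes, so the inequality then says $\eta\int_1^\infty\cdots\le 0$. That is false unless $\xi\in 2\pi\mathbb{Z}$, so either the $\sigma=0$ case of $T_2$ is read as a limit or convention, or it needs its own treatment. I would interpret $T_2(0,u,r)$ through the limit $\sigma\to0^+$ and pass to the limit in the inequality for $\sigma>0$. When $\eta=1$, the lemma is stated only for $\eta<1$, so I get this case by letting $\eta\to1^-$ with Fatou's lemma or dominated convergence. Domination holds because $1-\cos\le 2$ and $\eta\int_1^\infty\tau^{-1-2\eta}\d\tau\le 1$.

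The main obstacle is not the Fourier reduction, which is routine. It is making sure these endpoint cases, $\sigma=0$ and $\eta=1$, are handled consistently with the definition of $T_2$.
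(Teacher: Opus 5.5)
Your proposal is correct and is essentially the paper's proof. Plancherel reduces $T_2$ to the one-dimensional tail inequality of Lemma~\ref{lem:tail-short-cos} applied at each frequency, and your explicit rescaling $t=r\tau$ and endpoint treatment only spell out what the paper leaves implicit. One small correction to your side remark: at $\sigma=0$ the right-hand side of the lemma is of the form $0\cdot\infty$ rather than $0$, since $\int_1^\infty(1-\cos(\xi\tau))\tau^{-1}\d\tau=\infty$ for $\xi\neq0$, and in fact $2\sigma\int_1^\infty(1-\cos(\xi\tau))\tau^{-1-2\sigma}\d\tau\to1$ as $\sigma\to0^+$, which is why your limit interpretation of $T_2(0,u,r)$ is the right one.
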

\begin{proof}
The desired estimate  follows from  Lemma \ref{lem:tail-short-cos} since Plancherel's identity for the Fourier  transform implies
\begin{align*}
T^2_2 (\eta, u,r)&=\eta\, r^{2\eta}\int_r^\infty \frac{\d t}{t^{1+2\eta}} \int_{ \mathbb{S}^{d-1}} \int_{\R^d} | u(x+\omega t)-u(x)|^2\d x\d \sigma_{d-1}(\omega)\\
&=2 \int_{\mathbb{S}^{d-1}}  \int_{\R^d}  |\widehat{u}(\xi)|^2 \, \, \eta  r^{2\eta}\int_r^\infty \frac{(1-\cos (|\omega\cdot \xi| t))}{t^{1+2\eta}}\d t \d \xi \d \sigma_{d-1}(\omega).
\end{align*}
\end{proof}

\subsection{Fractional trace theorem} \label{sec:trace-theorem}
It is well-known that, see for instance \cite{DK21},    \cite[Chapter 9]{Leo23}, if $sp\leq 1$ then $W^{s,p}_0(\Omega)=W^{s,p}(\Omega)$. Therefore, the  trace operator on  $W^{s,p}(\Omega)$  is only worth studying when $sp>1$. For the reader’s convenience, we recall the trace theorem for $W^{s,p}(\Omega)$ with $0 < s \leq 1$. The fractional case is treated, for instance, in \cite{DMT22} and \cite[Chapter~9]{Leo23}, while for the classical case $s = 1$ we refer the reader to  the version in  \cite[Chapter~III]{BF13}.
\begin{theorem}[{Trace theorem for $W^{s,p}$}]
\label{thm:trace-frac-thm}
Let $\Omega \subset \R^d$ be a bounded Lipschitz domain. Assume $1 \leq p < \infty$ and $0 < s \leq 1$ such that $sp > 1$ or $p=s=1$. Then, there exists a bounded linear trace operator $\gamma^s_0: W^{s,p}(\Omega) \to L^p(\partial\Omega)$ such that $\gamma^s_0(u) = u|_{\partial\Omega}$ for all $u \in C^1(\overline{\Omega}) \cap W^{s,p}(\Omega)$. Namely there is a constant $C_s=C(d,p,s,\Omega)>0$ such that
\begin{align*}
\|\gamma^s_0(u)\|_{ L^p(\partial \Omega)}\leq
C_s \|u\|_{W^{s,p}(\Omega)}.
\end{align*}
Furthermore, the following assertions are true.
\begin{enumerate}[$(a)$]
\item  $ \ker(\gamma^s_0 ) = W^{s,p}_0(\Omega)$, and  $\gamma^s_0 (W^{s,p}(\Omega))\overset{def}{=} W^{s-\frac{1}{p},p}(\partial \Omega)$.
\item The mappings $W^{s,p}(\Omega) \xrightarrow{\gamma^s_0}W^{s-\frac{1}{p},p}(\partial\Omega)
\xrightarrow{\operatorname{Id}} L^p(\partial \Omega)$
are continuous.
\item The operator $\gamma^s_0$ has a right  lifting, i.e., there is $R^s_0:W^{1-\frac{1}{p},p}(\partial\Omega)\to W^{1,p}(\Omega)$ bounded and continuous mapping such that $ \gamma^s_0\circ R^s_0= Id$.
\end{enumerate}
In addition in the case $s=1$ we have
\begin{align*}
\|\gamma^1_0(u)\|_{ L^p(\partial \Omega)}\leq C\|u\|^{1-\frac{1}{p}}_{L^p(\Omega)}
\|u\|^{\frac{1}{p}}_{W^{1,p}(\Omega)} \quad\text{for all $u\in W^{1,p}(\Omega)$}.
\end{align*}
\end{theorem}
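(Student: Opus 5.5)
The statement is classical, so I would not prove it from scratch. I would assemble it from standard results, taking care to isolate what depends on $s$.

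\emph{Existence of the trace and the bound.} First I reduce to the half-space. Since $\Omega$ is bounded Lipschitz, I cover $\partial\Omega$ by finitely many cylinders. In each one, $\Omega$ is the supergraph of a Lipschitz function, and a partition of unity $(\psi_j)$ is subordinate to the cover. Flattening by the bi-Lipschitz map $(x',x_d)\mapsto(x',x_d-\phi_j(x'))$ preserves $W^{s,p}$ for $0<s\le 1$, with constants depending only on the Lipschitz constants. Multiplying by $\psi_j$ is bounded on $W^{s,p}$. Hence it suffices to treat $u\in W^{s,p}(\R^d_+)$ with compact support.

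On the half-space, for $u\in C^1_c(\overline{\R^d_+})$, I estimate $|u(x',0)|$ by averaging. I write
\[u(x',0)=\big(u(x',0)-\bar u_{r}(x')\big)+\bar u_r(x'),\]
where $\bar u_r(x')$ is the mean of $u$ over the half-ball $B_r((x',0))\cap\R^d_+$. I control the oscillation term by a dyadic telescoping sum over $r_k=2^{-k}$. This yields
\[\int_{\R^{d-1}}|u(x',0)|^p\d x'\lesssim \int_{\R^d_+}|u|^p+\sum_k r_k^{sp-1}\, r_k^{-sp}\fint\!\!\fint_{\text{half-balls}}|u(x)-u(y)|^p.\]
The geometric factor $r_k^{sp-1}$ makes the sum converge exactly when $sp>1$. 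The result is $\|u(\cdot,0)\|_{L^p}\le C_s\|u\|_{W^{s,p}}$. Here $C_s$ blows up like $(sp-1)^{-1/p}$ as $sp\downarrow1$, and the factor $s(1-s)$ in the norm is harmless for fixed $s$.

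For $p=s=1$, or for $s=1$ in general, I would instead use the fundamental theorem of calculus along vertical segments. Writing
\[|u(x',0)|^p\le \int_0^\infty|\partial_d(\chi|u|^p)|\d x_d\]
with a cutoff $\chi$, and applying Hölder, gives both the $W^{1,1}$ bound and the multiplicative bound
\[\|\gamma^1_0 u\|_{L^p}^p\lesssim \|u\|^{p-1}_{L^p}\|\nabla u\|_{L^p}+\|u\|_{L^p}^p.\]
Density of $C^1(\overline\Omega)$ in $W^{s,p}(\Omega)$, which holds via the extension operator and mollification, then defines $\gamma^s_0$ by continuous extension. This also shows $\gamma^s_0(u)=u|_{\partial\Omega}$ for smooth $u$.

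\emph{Assertions (a)--(c).} In (a), the equality $\gamma^s_0(W^{s,p})=W^{s-\frac1p,p}(\partial\Omega)$ is a definition: the trace space carries the quotient norm $\inf\{\|u\|_{W^{s,p}}:\gamma^s_0u=g\}$. Assertion (b) then follows immediately from the $L^p$ bound and the definition of the quotient norm.

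The kernel identity $\ker\gamma^s_0=W^{s,p}_0(\Omega)$ is the step I expect to be the main obstacle. The inclusion $\supseteq$ is clear by continuity. For $\subseteq$, I would localize and flatten as before, reducing to $u$ with zero trace on $\R^d_+$. I then take $u_\varepsilon=u\,\theta(x_d/\varepsilon)$, where the cutoff $\theta$ vanishes near $0$. The problem is the commutator term
\[\iint|u(x)|^p\,|\theta(x_d/\varepsilon)-\theta(y_d/\varepsilon)|^p|x-y|^{-d-sp}.\]
Its convergence to zero requires the fractional Hardy inequality
\[\int_{\R^d_+}|u|^p x_d^{-sp}\lesssim [u]^p_{W^{s,p}}\qquad(sp>1,\ \gamma_0 u=0).\]
I would prove this Hardy inequality first, in one dimension along vertical lines, using zero boundary values and integrating in $x'$. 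After that, mollification after a small inward translation finishes the argument. Alternatively, I would simply cite \cite{DK21,Leo23} for this step.

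For (c), I would use the standard Gagliardo extension. On the half-space, set
\[R g(x',x_d)=\chi(x_d)\,(\rho_{x_d}*g)(x'),\]
where $\rho$ is a mollifier. A direct computation with the $W^{1-\frac1p,p}$ seminorm gives $\|Rg\|_{W^{1,p}}\lesssim\|g\|_{W^{1-\frac1p,p}}$ and $Rg(\cdot,0)=g$. I transport this back to $\Omega$ with the charts. Since $W^{1,p}\hookrightarrow W^{s,p}$, we get $\gamma^s_0\circ R^s_0=\mathrm{Id}$, because all the traces coincide on smooth functions.
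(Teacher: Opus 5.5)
The paper gives no proof of this theorem; it simply refers to \cite{DMT22}, \cite[Chapter~9]{Leo23} and \cite[Chapter~III]{BF13}. Several parts of your sketch are the standard arguments and are fine: the $L^p(\partial\Omega)$ bound, the multiplicative estimate for $s=1$, assertion (b), and the kernel identity for $s<1$ via the fractional Hardy inequality.

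The genuine gap is (c). The statement includes $p=s=1$, and there your lifting cannot work. The operator $Rg=\chi(x_d)(\rho_{x_d}*g)$ is linear, and for $p=1$ the ``direct computation'' fails. Indeed $\partial_{x_d}(\rho_{x_d}*g)=x_d^{-1}(\psi_{x_d}*g)$ with $\psi(z)=-\operatorname{div}(z\,\rho(z))$. Hence for a unit-mass bump $g$ of width $\delta$ you get $\|\partial_{x_d}Rg\|_{L^1}\gtrsim\log(1/\delta)\,\|g\|_{L^1}$. No other linear choice can help: by Peetre's theorem \cite{Pee79}, which the paper recalls right after the statement, $\gamma^1_0:W^{1,1}(\Omega)\to L^1(\partial\Omega)$ has no bounded linear right inverse. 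For $p=1$ you need Gagliardo's nonlinear layered construction, built from smooth approximants $g_k\to g$ with $\sum_k\|g_{k+1}-g_k\|_{L^1}<\infty$ and layer thicknesses adapted to them. Alternatively, with the quotient norm, $W^{0,1}(\partial\Omega)$ is isometric to $W^{1,1}(\Omega)/W^{1,1}_0(\Omega)$. The Bartle--Graves selection theorem then already gives a bounded continuous (nonlinear) right inverse.

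For $1<p<\infty$ there is a smaller gap in the same step. Your computation bounds $\|Rg\|_{W^{1,p}(\Omega)}$ by the intrinsic seminorm $\iint_{\partial\Omega\times\partial\Omega}|g(x)-g(y)|^p|x-y|^{-(d+p-2)}\d\sigma(y)\d\sigma(x)$. In the statement, however, $W^{1-\frac1p,p}(\partial\Omega)$ carries the quotient norm $\inf\{\|u\|_{W^{1,p}(\Omega)}:\gamma^1_0(u)=g\}$, as you yourself note under (a). Passing from one norm to the other requires the direct half of Gagliardo's theorem: $\gamma^1_0$ maps $W^{1,p}(\Omega)$ boundedly into the intrinsic space. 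You only prove the $L^p(\partial\Omega)$ bound, so as written $R$ is not known to map the whole trace space into $W^{1,p}(\Omega)$, let alone boundedly.

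You have two ways to close this. Either cite that equivalence (e.g.\ \cite{MiRu15}, as the paper does), or avoid intrinsic norms entirely. Since $W^{1,p}(\Omega)$ is uniformly convex, each closed affine set $\{u:\gamma^1_0(u)=g\}$ has a unique element of least norm. Taking $R^1_0g$ to be that element gives $\|R^1_0 g\|_{W^{1,p}(\Omega)}=\|g\|_{W^{1-\frac1p,p}(\partial\Omega)}$, and continuity follows from continuity of the metric projection onto $W^{1,p}_0(\Omega)$.

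Two minor points:
\begin{itemize}
\item For $s=1$, and in particular $p=1$ where Hardy's inequality fails, the kernel identity should rest on $\int_0^\varepsilon|u(x',t)|\d t\le\varepsilon\int_0^\varepsilon|\partial_d u(x',t)|\d t$ for zero-trace $u$, not on Hardy.
\item The blow-up rate $(sp-1)^{-1/p}$ you assert for $C_s$ is not derived in your sketch and is wrong for $p\neq2$; the sharp order is $(sp-1)^{-1/p'}$. Since $C_s$ is allowed to depend on $s$, simply drop that claim.
\end{itemize}
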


\noindent Let us make some comments about the trace space $W^{s-\frac{1}{p}, p}(\partial \Omega)=\gamma^s_0( W^{s,p}(\Omega))$.
For practical reasons, we equip
$W^{s-\frac{1}{p} ,p}(\partial \Omega)$ with the norm
\begin{align*}
\|g\|_{W^{s-\frac{1}{p} ,p}(\partial \Omega)}
=
\inf\big\{ \|u\|^*_{W^{s,p}(\Omega)} \,\,:\,\, \gamma^s_0(u)=  g,\quad u\in W^{s,p}(\Omega) \big\}.
\end{align*}
where we consider the normalized norm
\begin{align*}
\|u\|^*_{W^{s,p}(\Omega)} &= \Big( \int_\Omega |u(x)|^p\d x+ \frac{C_{d,p,s}}{2} \iint_{ \Omega \times \Omega} \frac{|u(x)-u(y)|^p}{|x-y|^{d+sp}}\d y\d x\Big)^{\frac{1}{p}}.
\end{align*}
Note that, since $s\mapsto\frac{C_{d,s,p}}{s(1-s)}$ is bounded, there is a constant $c=c(d,p)>0$ such that
\begin{align}\label{eq:equiv-norm-start}
c^{-1}\|u\|_{W^{s,p}(\Omega)}\leq \|u\|^*_{W^{s,p}(\Omega)}\leq c \|u\|_{W^{s,p}(\Omega)}.
\end{align}
Furthermore, $\|u\|^*_{W^{1, p}(\Omega)}= \|u\|_{W^{1, p}(\Omega)}$ since by \eqref{eq:xasymp-form} we find that
$\|u\|^*_{W^{s,p}(\Omega)}\xrightarrow{s\to 1^-} \|u\|_{W^{1, p}(\Omega)}.$
 In particular, we get
\begin{align*}
\|g\|_{W^{1-\frac{1}{p} ,p}(\partial \Omega)}= \inf\big\{ \|u\|_{W^{1,p}(\Omega)} \,:\, \gamma^1_0(u)= g
\,\,\text{with} \quad u\in W^{1,p}(\Omega)\big\}.
\end{align*}
It is important to emphasize that
a lifting operator $ R^s_0 $ can be chosen to be linear (though not necessarily) only when
$1 < p < \infty $, whereas $R^1_0$ can never be linear in the case $ p=s = 1$. This is known as the Peetre's theorem \cite{Pee79}; a more recent and accessible proof can be found \cite{PeWo02}. Let us recall that when $\Omega$ is bounded Lipschitz, $s=1$ and  $1<p<\infty$, it is possible to obtain the explicit equivalence of the following trace norm, see for instance \cite{MiRu15},
\begin{align*}
\|u\|_{W^{1-\frac{1}{p} ,p}(\partial\Omega)}\asymp\Big( \int_{\partial \Omega} |u(x)|^p\d \sigma(x)+  \iint_{\partial \Omega\times \partial \Omega} \frac{|u(x)-u(y)|^p}{|x-y|^{d+ p-2}}\d \sigma(y) \d \sigma(x)\Big)^{\frac{1}{p}}.
\end{align*}
Interestingly, according to \cite{Gag57}, it turns out that in the case $p=1$,   the trace space $W^{0,1}(\partial\Omega)$ of $W^{1,1}(\Omega)$ coincides with $L^1(\partial\Omega)$, i.e. $\gamma_0 (W^{1,1}(\Omega))= L^1(\partial\Omega)$; we strongly recommend see \cite{Mir15} for elegant proof of the latter statement.

\begin{remark}\label{rem:trace-mono}
Let us point out two important remarks.
\begin{enumerate}[$(i)$]
\item
Since $W^{s,p}(\Omega)\subset W^{\eta,p}(\Omega)$ for $\frac{1}{p}<\eta<s\leq 1$, the spaces $W^{s,p}(\Omega)$ and $W^{\eta,p}(\Omega)$ share the same dense subset $C^1(\overline{\Omega}) \cap W^{s,p}(\Omega)$. Consequently, it follows that
$\gamma^s_0 = \gamma_0^\eta \big|_{W^{s,p}(\Omega)}$ for $ \frac{1}{p}<\eta<s\leq 1.$
\item In practice, when $\Omega$ is bounded, the estimate (see \cite[Chapter~III]{BF13})
\begin{align*}
\|\gamma^1_0(u)\|_{ L^p(\partial \Omega)}\leq C\|u\|^{1-\frac{1}{p}}_{L^p(\Omega)}
\|u\|^{\frac{1}{p}}_{W^{1,p}(\Omega)},
\end{align*}
allows to show that the trace operator $\gamma^1_0:W^{1,p}(\Omega)\to L^p(\partial \Omega)$ is compact. Interestingly, for the case $0<s<1$ with $sp>1$, if we consider $0<\eta<1$ such that $sp>\eta p>1$ say $\eta= (1-\tau) s+ \frac{\tau}{p}$ for any fixed $0<\tau<1$. Since  $\gamma^s_0 = \gamma_0^\eta \big|_{W^{s,p}(\Omega)}$, applying   Theorem \ref{thm:robust-interpo-dom} with $\theta= \frac{\eta}{s}$, we find that
\begin{align*}
\|\gamma^s_0(u)\|_{ L^p(\partial \Omega)}
\leq C_\eta
\|u\|_{W^{\eta,p}(\Omega)}
\leq C_\eta \|u\|^{1-\frac{\eta}{s}}_{L^{p}(\Omega)}\|u\|^{\frac{\eta}{s}}_{W^{s,p}(\Omega)}.
\end{align*}
That is, for any fixed $0<\tau<1$ we have $\theta= \frac{\eta}{s}= (1-\tau) + \frac{\tau}{sp}\in (0,1)$ and
\begin{align*}
\|\gamma^s_0(u)\|_{ L^p(\partial \Omega)}\leq
C_s
\|u\|^{1-\theta}_{L^{p}(\Omega)}\|u\|^{\theta}_{W^{s,p}(\Omega)},
\end{align*}
where $C_s= C(d,p,s,\Omega)>0$. However, we believe a better exponent factor $\theta$ can expected in the latter inequality.
\end{enumerate}
\end{remark}

\subsection{Asymptotic compactness of Lions--Peetre type}

\begin{theorem}\label{thm:asymp-converg-interpo}
Let $\Omega \subset \R^d$ be any open set and $1\leq p<\infty$. Assume  $(u_s)_{s\in (0,1)}\subset L^p(\Omega)$ is asymptotically bounded, that is, for some $s_0\in (0,1)$ we have
\begin{align*}
\sup_{s\in(s_0,1)}
\Big(\int_\Omega |u_s(x)|^p\d x+ \frac{C_{d,p,s}}{2}
\iint_{\Omega \times \Omega} \frac{|u_s(x)-u_s(y)|^p}{|x-y|^{d+sp}} \d y\d x\Big)<\infty.
\end{align*}
If $\|u_s-u\|_{ L^p(\Omega)}\xrightarrow{s\to 1^-}0 $  for some $u\in L^p(\Omega)$, then the following assertions hold.
\begin{itemize}
\item 
If  $1<p<\infty$ then
$u\in W^{1,p}(\Omega)$ and we have
\begin{align*}
\|\nabla u\|^p_{L^p(\Omega)}
&\leq \liminf_{s\to1^-} \frac{C_{d,p,s}}{2}
\iint_{\Omega \times \Omega} \frac{|u_s(x)-u_s(y)|^p}{|x-y|^{d+sp}} \d y\d x\qquad 1<p<\infty.
\end{align*}
Likewise, if $p=1$ then  $u\in BV(\Omega)$ and its total variation $|u|_{BV(\Omega)}$ satisfies
\begin{align*}
|u|_{BV(\Omega)}
&\leq \liminf_{s\to1^-} \frac{C_{d,1,s}}{2}
\iint_{\Omega \times \Omega} \frac{|u_s(x)-u_s(y)|}{|x-y|^{d+s}} \d y\d x\qquad p=1.
\end{align*}

\item
If in addition, $\Omega$ is a robust $W^{s,p}$-extension domain then we have
\begin{align*}
\|u_s-u\|_{W^{\eta,p}(\Omega)} \xrightarrow{s\to 1^-}0\quad\text{for all $0\leq \eta<1$}.
\end{align*}
This remains true, for instance, in the following cases: $\Omega = \R^d$, $\Omega = \R^d_+$, $\partial\Omega$ is compact and Lipschitz, or $\Omega$ is the epigraph of a Lipschitz map.
\item 
If  in addition,  $\partial\Omega$  is compact and Lipschitz then for all $\tau\in (\frac{1}{p}, 1)$  we have
\begin{align*}
\|\gamma^s_0(u_s)-\gamma^1_0(u)\|_{L^{p}
(\partial \Omega)}+ \|\gamma^s_0(u_s)-\gamma^1_0(u)\|_{W^{\tau-\frac{1}{p},p}(\partial \Omega)} \xrightarrow{s\to 1^-}0.
\end{align*}
\end{itemize}
\end{theorem}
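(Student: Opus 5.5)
The three assertions are proved in order, each feeding the next.

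\textbf{First assertion ($\liminf$ inequality).} I would use a Bourgain--Brezis--Mironescu/Ponce type argument by mollification. Fix $\omega\Subset\Omega$ and a mollifier $\rho_\varepsilon$ with $\varepsilon<\operatorname{dist}(\omega,\partial\Omega)$.

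\begin{enumerate}[$(i)$]
\item By Jensen's inequality and translation invariance, the convolution does not increase the localized Gagliardo energy. For $u_s*\rho_\varepsilon$ on $\omega$ one gets
\begin{align*}
\frac{C_{d,p,s}}{2}\iint_{\omega\times\omega}\frac{|u_s*\rho_\varepsilon(x)-u_s*\rho_\varepsilon(y)|^p}{|x-y|^{d+sp}}\,\d y\,\d x\le \frac{C_{d,p,s}}{2}\iint_{\Omega\times\Omega}\frac{|u_s(x)-u_s(y)|^p}{|x-y|^{d+sp}}\,\d y\,\d x .
\end{align*}
To justify this, shift $x,y$ by the same $z$ and restrict to pairs lying in $\Omega$.
\item For fixed $\varepsilon$, $u_s*\rho_\varepsilon\to u*\rho_\varepsilon$ in $C^2(\overline\omega)$ as $s\to1^-$, since $u_s\to u$ in $L^p$.
\item For smooth functions on $\omega$, a Taylor expansion together with the normalization \eqref{eq:asymp-normaliz-pconst} shows that the left-hand side converges to $\int_{\omega'}|\nabla(u*\rho_\varepsilon)|^p$ for any $\omega'\Subset\omega$. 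The uniform $C^2$ convergence lets me pass to the limit in $s$ and $\varepsilon$ jointly, controlling the near-diagonal part by the $C^2$ bound and the far part by $(1-s)\to0$.
\item Letting $s\to1^-$ gives $\|\nabla(u*\rho_\varepsilon)\|^p_{L^p(\omega')}\le\liminf_{s\to1^-}(\cdots)$. Letting $\varepsilon\to0$, then $\omega'\uparrow\Omega$, uses lower semicontinuity: weak compactness in $L^p$ for $p>1$, and lower semicontinuity of the total variation for $p=1$. This yields $u\in W^{1,p}$ (resp.\ $BV$) with the stated bound.
\end{enumerate}

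\textbf{Second assertion (convergence in $W^{\eta,p}$).} By the first step $u\in W^{1,p}(\Omega)$, so $u\in W^{s,p}(\Omega)$. For a $W^{1,p}$-extension domain, \eqref{eq:xasymp-form} gives $\|u\|_{W^{s,p}(\Omega)}\to\|u\|_{W^{1,p}(\Omega)}$. The uniform bound on $u_s$ carries over to $\|u_s\|_{W^{s,p}}$ via \eqref{eq:equiv-norm-start}, hence $\sup_{s\in(s_0,1)}\|u_s-u\|_{W^{s,p}(\Omega)}<\infty$.

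Fix $\eta<1$ and take $s>\eta$. Apply Theorem~\ref{thm:robust-interpo-dom} with $\sigma=0$ and $\theta=\eta/s$:
\begin{align*}
\|u_s-u\|_{W^{\eta,p}(\Omega)}\le C\|u_s-u\|^{\eta/s}_{W^{s,p}(\Omega)}\|u_s-u\|^{1-\eta/s}_{W^{0,p}(\Omega)} .
\end{align*}
Since $\|\cdot\|_{W^{0,p}}$ is a multiple of the $L^p$ norm and $1-\eta/s\to1-\eta>0$, the right-hand side tends to $0$. The constant $C$ is uniform because $\Omega$ is a robust extension domain.

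\textbf{Third assertion (trace convergence).} Fix $\tau\in(\frac1p,1)$. For $s>\tau$, Remark~\ref{rem:trace-mono}$(i)$ gives $\gamma^s_0(u_s)=\gamma^\tau_0(u_s)$ and $\gamma^1_0(u)=\gamma^\tau_0(u)$. Thus $\gamma^s_0(u_s)-\gamma^1_0(u)=\gamma^\tau_0(u_s-u)$, a single fixed bounded operator applied to $u_s-u$. By definition of the quotient norm, together with \eqref{eq:equiv-norm-start} and Theorem~\ref{thm:trace-frac-thm}$(b)$,
\begin{align*}
\|\gamma^\tau_0(u_s-u)\|_{W^{\tau-\frac1p,p}(\partial\Omega)}+\|\gamma^\tau_0(u_s-u)\|_{L^p(\partial\Omega)}\le C_\tau\|u_s-u\|_{W^{\tau,p}(\Omega)} .
\end{align*}
This tends to $0$ by the second assertion with $\eta=\tau$.

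\textbf{Main obstacle.} I expect the difficulty to be step (iii), the joint limit for mollified functions near $\partial\omega$. The interaction region must be shown not to lose mass in the $\liminf$ inequality, and step (i) must be arranged carefully. I would first try restricting the double integral to $\omega\times\omega$ with $\omega$ shrinking, and exploit $(1-s)\to0$ to kill long-range contributions.
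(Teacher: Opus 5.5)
Your proposal follows the paper's proof. For the first assertion the paper only cites Ponce, and your mollification, Jensen and Taylor argument is the standard proof behind that citation. The second and third assertions are the paper's argument, up to one substitution: you use \eqref{eq:xasymp-form} where the paper uses the uniform bound \eqref{eq:xxuniform-bound}. Otherwise the steps coincide: a uniform bound on $\|u_s-u\|_{W^{s,p}(\Omega)}$, then Theorem~\ref{thm:robust-interpo-dom} with $\sigma=0$, $\theta=\eta/s$, then $\gamma^s_0(u_s)-\gamma^1_0(u)=\gamma^\tau_0(u_s-u)$ and the trace bound at the fixed level $\tau$.

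The ``main obstacle'' you anticipate in step $(iii)$ does not arise, because you only need a lower bound. Keep only the pairs with $x\in\omega'$ and $|x-y|<\delta<\operatorname{dist}(\omega',\partial\omega)$. On that set the Taylor estimate, with the $O(|x-y|^2)$ error killed by the factor $1-s$, gives convergence to $\int_{\omega'}|\nabla(u*\rho_\varepsilon)|^p$ at fixed $\varepsilon$. No joint limit in $(s,\varepsilon)$ is needed: first let $s\to1^-$, then $\varepsilon\to0$.

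There is one genuine slip, in the second assertion for $p=1$. You write ``by the first step $u\in W^{1,p}(\Omega)$'' and then invoke \eqref{eq:xasymp-form}. For $p=1$ the first step only gives $u\in BV(\Omega)$, and \eqref{eq:xasymp-form} is a statement about $W^{1,p}$ functions. So $\sup_s\|u\|_{W^{s,1}(\Omega)}<\infty$ is not justified as written. The paper covers this case through \eqref{eq:xxuniform-bound}, which has a $BV$ version.

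A repair that uses only the robust-extension hypothesis, and not even the first assertion:
\begin{enumerate}[$(a)$]
\item Fix $\eta<t<1$. For $s>t$, Lemma~\ref{lem:frac-unif-bound-trace-bis}$(i)$ gives $\|u_s\|_{W^{t,p}(\Omega)}\le C\|u_s\|_{W^{s,p}(\Omega)}\le C$.
\item Apply Fatou's lemma along an a.e.\ convergent subsequence to get $\|u\|_{W^{t,p}(\Omega)}\le C$. Hence $\sup_{s>t}\|u_s-u\|_{W^{t,p}(\Omega)}<\infty$.
\item Interpolate between $0$ and the fixed level $t$ with $\theta=\eta/t$. This gives $\|u_s-u\|_{W^{\eta,p}(\Omega)}\le C\|u_s-u\|_{L^p(\Omega)}^{1-\eta/t}\to0$.
\end{enumerate}
With this repair your argument covers all $1\le p<\infty$.
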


\begin{proof}
The  first claim can be found in \cite{Pon04} or in \cite[Theorem 5.37]{guy-thesis}.
If $\Omega$ is in particular a $W^{1,p}$-extension domain  then \cite[Lemma 2.12]{Fog23} we have
\begin{align}\label{eq:xxuniform-bound}
s(1-s)
\iint_{\Omega \times \Omega} \frac{|u(x)-u(y)|^p}{|x-y|^{d+sp}} \d y\d x\leq C(d,p,\Omega)
\begin{cases}
\|u\|^p_{W^{1,p}(\Omega)}&1<p<\infty\\
\|u\|_{BV(\Omega)} &p=1.
\end{cases}
\end{align}
This together with assumption on $u_s$ implies that $\|u_s-u\|^{\frac{\eta}{s}}_{W^{s,p}(\Omega)}\leq C$ with $C$ on depending on $d,p$ and $\Omega$. Thus, for the second claim, let us consider $0\leq \eta < s <1$  and put $\theta=\frac{\eta}{s}$ we have $u_s, u \in W^{\eta,p}(\Omega)$  and hence using Theorem \ref{thm:robust-interpo-dom} we get
\begin{align*}
\|u_s-u\|_{W^{\eta,p}(\Omega)}\leq C \|u_s-u\|^{\frac{\eta}{s}}_{W^{s,p}(\Omega)}
\|u_s-u\|^{1-\frac{\eta}{s}}_{L^{p}(\Omega)}\leq C \|u_s-u\|^{1-\frac{\eta}{s}}_{L^{p}(\Omega)}
\xrightarrow{s\to1^-}0.
\end{align*}

Note that $u_s, u \in W^{\tau,p}(\Omega)$ for $\frac{1}{p}<\tau\leq s<1$. According to the trace theorem (see Theorem \ref{thm:trace-frac-thm}) and the previous claims we  have
\begin{align*}
\|\gamma^s_0(u_s)-\gamma^1_0(u)\|_{L^p(\partial \Omega)}+ \|\gamma^s_0(u_s)-\gamma^1_0(u)\|_{W^{\tau-\frac{1}{p},p}(\partial \Omega)} \leq C_\tau \|u_s-u\|_{W^{\tau,p}( \Omega)}  \xrightarrow{s\to 1^-}0
\end{align*}
for some  a constant $C_\tau=C(d,p,\Omega,\tau)>0$. The third claim is proved.
\end{proof}

The following result establishes a form of asymptotic Lions--Peetre-type compactness (see \cite[Theorems~V.2.1 and~V.2.2]{LiPe64}), originally proved in \cite[Corollary~7]{BBM01}.

\begin{theorem}\label{thm:asymp-compact-frac}
Assume $\Omega \subset \R^d$ is open bounded and Lipschitz and let $s_0 \in [0,1)$. Assume $(u_s)_{s\in[0,1)} \subset L^p(\Omega)$ is a bounded family such that
\begin{align*}
\sup_{s\in(s_0,1)}
\Big(\int_\Omega |u_s(x)|^p\d x+ \frac{C_{d,p,s}}{2}
\iint_{\Omega \times \Omega} \frac{|u_s(x)-u_s(y)|^p}{|x-y|^{d+sp}} \d y\d x\Big)<\infty.
\end{align*}
Then there exist a subsequence $s_n\to 1^-$ and $u \in L^p(\Omega)$ such that
\begin{itemize}
\item $u_{s_n}\to u$ in $L^p(\Omega)$ and  for all $0\leq \eta<1$ we have
\begin{align*}
\|u_s-u\|_{W^{\eta,p}(\Omega)}
\xrightarrow{s\to 1^-}0\quad \text{for all $0\leq \eta<1$}.
\end{align*}
\item  
If $1<p<\infty$ then
$u\in W^{1,p}(\Omega)$ and we have
\begin{align*}
\|\nabla u\|^p_{L^p(\Omega)}
&\leq \liminf_{s_n\to 1^-} \frac{C_{d,p,s_n}}{2}
\iint_{\Omega \times \Omega} \frac{|u_{s_n}(x)-u_{s_n}(y)|^p}{|x-y|^{d+s_np}} \d y\d x\qquad 1<p<\infty
\end{align*}
while if  $p=1$ then $u\in BV(\Omega)$ and we have
\begin{align*}
|u|_{BV(\Omega)}
&\leq \liminf_{s_n\to 1^-} \frac{C_{d,1,s_n}}{2}
\iint_{\Omega \times \Omega} \frac{|u_{s_n}(x)-u_{s_n}(y)|}{|x-y|^{d+s_n}} \d y\d x\qquad p=1.
\end{align*}
\item If $1<p<\infty$ then for all $0< \tau <1$ and $\tau p>1$ we have
\begin{align*}
\|\gamma^{s_n}_0(u_{s_n})-\gamma^1_0(u)\|_{L^{p}(\partial \Omega)}+ \|\gamma_0^{s_n} (u_{s_n})-\gamma^1_0(u)\|_{W^{\tau-\frac{1}{p},p}(\partial \Omega)}
\xrightarrow{s_n\to 1^-}0.
\end{align*}
\end{itemize}
\end{theorem}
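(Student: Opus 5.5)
The proof reduces to three things: finding an $L^p$ limit, upgrading it to $W^{\eta,p}$ convergence, and passing to traces. We first need $L^p$ compactness of the family under the uniform bound. Let $E$ be the robust extension operator, available since $\Omega$ is bounded Lipschitz, and fix a cutoff $\chi\in C_c^\infty(\R^d)$ with $\chi=1$ on $\Omega$. Put $U_s=\chi Eu_s$. By the robust extension property together with the equivalence \eqref{eq:equiv-norm-start}, the functions $U_s$ are uniformly bounded in $W^{s,p}(\R^d)$ for $s\in(s_0,1)$ and are supported in a fixed compact set. Multiplication by $\chi$ is uniformly bounded on $W^{s,p}$, which is a routine check.

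The Bourgain--Brezis--Mironescu compactness criterion \cite[Theorem 4]{BBM01}, in the form with the normalizing factor $s(1-s)\asymp C_{d,p,s}$, then gives a subsequence $s_n\to1^-$ and $u\in L^p(\Omega)$ with $u_{s_n}\to u$ in $L^p(\Omega)$. As an alternative I would cite \cite[Corollary 7]{BBM01} directly, as the remark before the statement suggests. A more self-contained route is also possible. Fix $\eta_0\in(0,1)$ and apply the robust interpolation Theorem \ref{thm:robust-interpo-dom} with $\sigma=0$; this bounds $\|u_s\|_{W^{\eta_0,p}(\Omega)}$ uniformly for $s>\eta_0$. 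The fixed-order compact embedding $W^{\eta_0,p}(\Omega)\hookrightarrow L^p(\Omega)$ for bounded Lipschitz $\Omega$ then yields the subsequence. I would use this second route since it only relies on results stated earlier, except for the classical fractional Rellich theorem at a fixed order.

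Once $u_{s_n}\to u$ in $L^p(\Omega)$, the remaining assertions follow from Theorem \ref{thm:asymp-converg-interpo}, applied along the sequence $s_n$; its proof is sequential, so it applies verbatim. Its first item gives $u\in W^{1,p}(\Omega)$ for $1<p<\infty$, respectively $u\in BV(\Omega)$ for $p=1$, together with the liminf inequality. Its second item gives $\|u_{s_n}-u\|_{W^{\eta,p}(\Omega)}\to0$ for all $0\le\eta<1$. The mechanism is interpolation between $L^p$ and $W^{s_n,p}$ with $\theta=\eta/s_n$. This uses the bound on $\|u_{s_n}-u\|_{W^{s_n,p}}$, which comes from \eqref{eq:xxuniform-bound} for $u\in W^{1,p}$. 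For $p=1$ one only has $u\in BV$, and \eqref{eq:xxuniform-bound} still covers that case.

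The trace item follows from its third item, i.e.\ from the trace theorem at the fixed order $\tau$. The key point is Remark \ref{rem:trace-mono}(i): $\gamma^{s_n}_0(u_{s_n})=\gamma^\tau_0(u_{s_n})$ once $s_n>\tau$. Hence the error is controlled by $C_\tau\|u_{s_n}-u\|_{W^{\tau,p}(\Omega)}\to0$, and the trace-space part is bounded in the same way by the definition of the quotient norm.

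The main obstacle is the compactness step. One must make sure the compactness constant does not degenerate as $s\to1$, which is exactly why interpolation down to a fixed $\eta_0$ with a uniform constant $C(d,p,\Omega)$ is essential. One must also check that the bound on $\|u_{s_n}-u\|_{W^{s_n,p}}$ is uniform in $n$. I would verify the latter by the triangle inequality combined with \eqref{eq:xxuniform-bound}.
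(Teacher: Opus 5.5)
Your proof is correct. From the point where $u_{s_n}\to u$ in $L^p(\Omega)$ onward it is the paper's argument: you apply Theorem \ref{thm:asymp-converg-interpo} along $s_n$, with Remark \ref{rem:trace-mono}(i) for the traces. You also rightly read the first bullet along the subsequence.

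The genuine difference is the compactness step. The paper simply invokes the Bourgain--Brezis--Mironescu/Ponce criterion \cite{BBM01,Pon04}. You instead use Lemma \ref{lem:frac-unif-bound-trace-bis}(i), i.e.\ Theorem \ref{thm:robust-interpo-dom} with $\sigma=0$, to bound $\|u_s\|_{W^{\eta_0,p}(\Omega)}$ uniformly for $s\in(\max\{s_0,\eta_0\},1)$. Here $C_{d,p,s}\asymp s(1-s)$ converts the hypothesis into a bound on $\|u_s\|_{W^{s,p}(\Omega)}$. You then apply the classical compact embedding $W^{\eta_0,p}(\Omega)\hookrightarrow L^p(\Omega)$ at a fixed order.

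Your route buys economy. The robust interpolation inequality is needed anyway for the $W^{\eta,p}$ upgrade, and you show it already yields the $s$-uniform compactness on robust extension domains, leaving only an $s$-independent Rellich theorem. As you note, everything hinges on the $s$-independent constant: comparing the $W^{\eta_0,p}$ and $W^{s,p}$ seminorms directly would cost a factor of order $\frac{1}{1-s}$.

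The paper's route buys independence, for this step, from the deep robust interpolation estimate of \cite{Fog26rob}. It also uses a criterion that works for general families of radial kernels, not only fractional ones.

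Your preliminary paragraph with $\chi E u_s$ is essentially the paper's route. It is not needed once you take the interpolation path.
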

\begin{proof}
Following \cite{BBM01, Pon04}, we find a subsequence $s_n \to 1^-$ and $u \in L^p(\Omega)$ such that $u_{s_n} \to u$ in $L^p(\Omega)$.  The other claims follow directly from Theorem \ref{thm:asymp-converg-interpo}.
\end{proof}

\section{Asymptotics of the trace spaces} \label{sec:robust-trace}
In this section we investigate the convergence of the trace operator  and hence the convergence the fractional trace space $W^{s-\frac{1}{p},p}(\partial \Omega)$ of the fractional Sobolev space  $W^{s,p}(\Omega),$ $s\in (0,1)$ when $sp> 1$ and $\Omega$ is bounded Lipschitz.
We will need the following lemma.
\begin{lemma}
\label{lem:frac-unif-bound-trace-bis}
Let $\Omega \subset \R^d$ be an open set. Assume that $\Omega$ is a robust $W^{s,p}$-extension domain; this holds true, in particular, if $\Omega = \R^d$, $\Omega = \R^d_+$, or if $\partial\Omega$ is compact and Lipschitz. For  $1 \leq p < \infty$, there exists a constant $C = C(d,p,\Omega)>0$ such that:
\begin{enumerate}[$(i)$]
\item For any $0\leq  \eta < s \leq 1$ and all $u \in W^{s,p}(\Omega)$,
\begin{align*}
\|u\|_{W^{\eta,p}(\Omega)}
\leq C \|u\|_{W^{s,p}(\Omega)}.
\end{align*}

\item For any $\frac{1}{p} < \eta < s \leq 1$ and all $g \in W^{s-\frac{1}{p},p}(\partial \Omega)$,
\begin{align*}
\|g\|_{W^{\eta-\frac{1}{p},p}(\partial \Omega)} \leq C \|g\|_{W^{s-\frac{1}{p},p}(\partial \Omega)}.
\end{align*}
\end{enumerate}
\end{lemma}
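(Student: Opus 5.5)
Part $(i)$ is a direct consequence of the robust interpolation inequality. Fix $0\le\eta<s\le 1$ and $u\in W^{s,p}(\Omega)$. If $\eta=0$ the bound is trivial: by definition $\|u\|^p_{W^{0,p}(\Omega)}=(1+\frac{2|\mathbb{S}^{d-1}|}{p})\|u\|^p_{L^p(\Omega)}$, which is at most a constant times $\|u\|^p_{W^{s,p}(\Omega)}$. If $\eta>0$, we apply Theorem \ref{thm:robust-interpo-dom} with $\sigma=0$ and $\theta=\eta/s\in(0,1)$. This gives
\begin{align*}
\|u\|_{W^{\eta,p}(\Omega)}\le C\|u\|^{\theta}_{W^{s,p}(\Omega)}\|u\|^{1-\theta}_{W^{0,p}(\Omega)}.
\end{align*}
We have $\|u\|_{W^{0,p}(\Omega)}\le c_{d,p}\|u\|_{L^p(\Omega)}\le c_{d,p}\|u\|_{W^{s,p}(\Omega)}$, so the right-hand side is at most $C\,c_{d,p}^{1-\theta}\|u\|_{W^{s,p}(\Omega)}$. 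Since $c_{d,p}^{1-\theta}\le\max(1,c_{d,p})$, the constant depends only on $d,p,\Omega$ and is uniform in $\eta,s$. This uniformity is the only point that matters, and it is inherited from the uniform constant in Theorem \ref{thm:robust-interpo-dom}.

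For $(ii)$, the plan is to work with the infimum definition of the trace norm and use the compatibility of traces from Remark \ref{rem:trace-mono}$(i)$. Fix $\frac1p<\eta<s\le1$ and $g\in W^{s-\frac1p,p}(\partial\Omega)$, and let $u\in W^{s,p}(\Omega)$ be any function with $\gamma^s_0(u)=g$. Then $u\in W^{\eta,p}(\Omega)$, and $\gamma_0^\eta(u)=\gamma^s_0(u)=g$. Hence $g\in W^{\eta-\frac1p,p}(\partial\Omega)$, and $u$ is an admissible competitor in the infimum that defines $\|g\|_{W^{\eta-\frac1p,p}(\partial\Omega)}$. Combining the norm equivalence \eqref{eq:equiv-norm-start} (with $c=c(d,p)$ independent of $s$) with part $(i)$ gives
\begin{align*}
\|g\|_{W^{\eta-\frac1p,p}(\partial\Omega)}\le \|u\|^*_{W^{\eta,p}(\Omega)}\le c\|u\|_{W^{\eta,p}(\Omega)}\le cC\|u\|_{W^{s,p}(\Omega)}\le c^2C\|u\|^*_{W^{s,p}(\Omega)}.
\end{align*}
Taking the infimum over all such $u$ yields $(ii)$.

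The main subtle step is checking that every constant in this chain is independent of $\eta$ and $s$. There are three sources of constants, and each is uniform for a stated reason.
\begin{itemize}
\item The equivalence \eqref{eq:equiv-norm-start} is uniform because $s\mapsto C_{d,p,s}/(s(1-s))$ is bounded on $(0,1)$. It also holds at $s=1$ by \eqref{eq:xasymp-form}, since $\|\cdot\|^*_{W^{1,p}}=\|\cdot\|_{W^{1,p}}$.
\item The interpolation constant in Theorem \ref{thm:robust-interpo-dom} depends only on $d,p,\Omega$.
\item The trace identity $\gamma_0^\eta|_{W^{s,p}}=\gamma^s_0$ follows from density of $C^1(\overline\Omega)\cap W^{s,p}(\Omega)$ in both spaces together with continuity of both trace operators. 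This holds for Lipschitz $\Omega$ with compact boundary, and we take it as given from Remark \ref{rem:trace-mono}.
\end{itemize}
The case $s=1$ is included, since $W^{1,p}(\Omega)\subset W^{\eta,p}(\Omega)$ by part $(i)$ and Remark \ref{rem:trace-mono}$(i)$ applies as well.
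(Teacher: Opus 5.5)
Your proof is correct and follows the paper's argument essentially verbatim. Part $(i)$ uses the robust interpolation inequality with $\sigma=0$ and $\theta=\eta/s$. Part $(ii)$ takes an arbitrary $u$ with $\gamma^s_0(u)=g$, uses $\gamma^\eta_0|_{W^{s,p}(\Omega)}=\gamma^s_0$, applies $(i)$ together with the uniform norm equivalence \eqref{eq:equiv-norm-start}, and passes to the infimum. Two small points improve on the paper's version: you treat $\eta=0$ separately (Theorem \ref{thm:robust-interpo-dom} requires $\sigma<\eta$, so it does not literally cover that case), and you track explicitly why each constant is independent of $s$.
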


\begin{proof}
Recall that $\|u\|_{W^{0,p}(\Omega)}= (1+\frac{|\mathbb{S}^{d-1}|}{p})^{1/p}\|u\|_{L^p(\Omega)}$. By Theorem \ref{thm:robust-interpo-dom} we get
\begin{align*}
\|u\|_{W^{\eta,p}(\Omega)} \leq C \|u\|_{W^{s,p}(\Omega)}^{\frac{\eta}{s}} \|u\|_{W^{0,p}(\Omega)}^{1-\frac{\eta}{s}}\leq C\|u\|_{W^{s,p}(\Omega)},
\end{align*}
for some  constant $C = C(d, p, \Omega)>0$. Let  $g\in W^{s-\frac{1}{p},p}(\partial \Omega)$ with $\frac{1}{p}<\eta<s\leq 1$  and consider $v\in W^{s,p}(\Omega)$ such that $\gamma^s_0(v)= g$. The previous estimate implies that
\begin{align*}
\|v\|_{W^{\eta,p}(\Omega)}\leq C\|v\|_{W^{s,p}(\Omega)}.
\end{align*}
Since $\gamma^\eta_0\mid_{W^{s,p}(\Omega)}= \gamma^s_0$ we obtain
\begin{align*}
\|g\|_{W^{\eta-\frac{1}{p},p}(\partial \Omega)}\leq C \|v\|_{W^{\eta,p}(\Omega)}\leq C\|v\|_{W^{s,p}(\Omega)}\leq C\|v\|^*_{W^{s,p}(\Omega)}.
\end{align*}
Passing to the infimum for $v\in W^{s,p}(\Omega)$ such that $\gamma^s_0 (v)=g$ we deduce that
\begin{align*}
\|g\|_{W^{\eta-\frac{1}{p},p}(\partial \Omega)}\leq C\|g\|_{W^{s-\frac{1}{p},p}(\partial \Omega)}.
\end{align*}
\end{proof}

\begin{theorem}
Assume $\Omega\subset \R^d$ is bounded Lipschitz.
There holds that
\begin{align*}
\lim_{s\to1^-}\|g\|_{W^{s-\frac{1}{p} ,p}(\partial \Omega)}= \|g\|_{W^{1-\frac{1}{p} ,p}(\partial \Omega)} \qquad \text{for all $g\in W^{1-\frac{1}{p} ,p}(\partial \Omega)$}.
\end{align*}
In particular, since $\gamma^s_0|_{W^{1,p}(\Omega)}=\gamma^1_0$, there holds that
\begin{align*}
\lim_{s\to1^-}\|\gamma^s_0(g)\|_{W^{s-\frac{1}{p} ,p}(\partial \Omega)}= \|\gamma^1_0(g)\|_{W^{1-\frac{1}{p} ,p}(\partial \Omega)}\qquad \text{for all $g\in W^{1,p}(\Omega)$}.
\end{align*}
\end{theorem}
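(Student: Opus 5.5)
We prove the two inequalities $\limsup_{s\to1^-}\|g\|_{W^{s-\frac1p,p}(\partial\Omega)}\le \|g\|_{W^{1-\frac1p,p}(\partial\Omega)}$ and $\|g\|_{W^{1-\frac1p,p}(\partial\Omega)}\le \liminf_{s\to1^-}\|g\|_{W^{s-\frac1p,p}(\partial\Omega)}$ separately.

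\emph{Upper bound.} Fix $g\in W^{1-\frac1p,p}(\partial\Omega)$ and $\varepsilon>0$. Choose $v\in W^{1,p}(\Omega)$ with $\gamma^1_0(v)=g$ and $\|v\|_{W^{1,p}(\Omega)}\le \|g\|_{W^{1-\frac1p,p}(\partial\Omega)}+\varepsilon$. By Remark \ref{rem:trace-mono}$(i)$ we have $\gamma^s_0(v)=\gamma^1_0(v)=g$ for every $s\in(\frac1p,1)$. Hence $v$ is admissible in the infimum defining $\|g\|_{W^{s-\frac1p,p}(\partial\Omega)}$, which gives $\|g\|_{W^{s-\frac1p,p}(\partial\Omega)}\le \|v\|^*_{W^{s,p}(\Omega)}$. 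By \eqref{eq:xasymp-form} ($\Omega$ bounded Lipschitz is a $W^{1,p}$-extension domain), $\|v\|^*_{W^{s,p}(\Omega)}\to\|v\|_{W^{1,p}(\Omega)}$ as $s\to1^-$. Taking $\limsup$ and then letting $\varepsilon\to0$ gives the upper bound. In particular the $\liminf$ appearing in the lower bound is finite.

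\emph{Lower bound.} Take a sequence $s_n\to1^-$ realizing $L:=\liminf_{s\to1^-}\|g\|_{W^{s-\frac1p,p}(\partial\Omega)}$. For each $n$ pick $u_n\in W^{s_n,p}(\Omega)$ with $\gamma^{s_n}_0(u_n)=g$ and $\|u_n\|^*_{W^{s_n,p}(\Omega)}\le \|g\|_{W^{s_n-\frac1p,p}(\partial\Omega)}+\frac1n$. The family $(u_n)$ is then asymptotically bounded in exactly the sense of Theorem \ref{thm:asymp-compact-frac}. That theorem yields a further subsequence and some $u\in W^{1,p}(\Omega)$ with the following properties:
\begin{itemize}
\item $u_n\to u$ in $L^p(\Omega)$;
\item $\|\nabla u\|^p_{L^p(\Omega)}\le \liminf_n \frac{C_{d,p,s_n}}{2}\iint_{\Omega\times\Omega}\frac{|u_n(x)-u_n(y)|^p}{|x-y|^{d+s_np}}\d y\d x$;
\item $\gamma^{s_n}_0(u_n)\to\gamma^1_0(u)$ in $L^p(\partial\Omega)$.
\end{itemize}
Since $\gamma^{s_n}_0(u_n)=g$ for all $n$, we get $\gamma^1_0(u)=g$, so $u$ is admissible for $\|g\|_{W^{1-\frac1p,p}(\partial\Omega)}$.

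\emph{Main obstacle: normalization in the liminf.} We must check that the limit norm is $\|u\|^p_{L^p(\Omega)}+\|\nabla u\|^p_{L^p(\Omega)}$, i.e. that $\|u\|^*_{W^{1,p}(\Omega)}=\|u\|_{W^{1,p}(\Omega)}$ with the constants matching those in \eqref{eq:xasymp-form} and Theorem \ref{thm:asymp-converg-interpo}. Granting this, combine the two pieces: the $L^p$ parts converge, and the Gagliardo part is lower semicontinuous by Theorem \ref{thm:asymp-converg-interpo}. Since $\liminf a_n+\liminf b_n\le\liminf(a_n+b_n)$, this yields
\begin{align*}
\|g\|^p_{W^{1-\frac1p,p}(\partial\Omega)}\le \|u\|^p_{W^{1,p}(\Omega)}\le \liminf_n \big(\|u_n\|^*_{W^{s_n,p}(\Omega)}\big)^p\le L^p.
\end{align*}

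Combining the upper and lower bounds shows that the limit exists and equals $\|g\|_{W^{1-\frac1p,p}(\partial\Omega)}$. The second statement of the theorem follows by applying the first to $\gamma^1_0(g)$ for $g\in W^{1,p}(\Omega)$, using $\gamma^s_0|_{W^{1,p}(\Omega)}=\gamma^1_0$ from Remark \ref{rem:trace-mono}$(i)$.
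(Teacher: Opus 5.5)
Your proof is correct and follows essentially the paper's route: the limsup bound comes from a fixed near-optimal $W^{1,p}$ lifting together with \eqref{eq:xasymp-form}, and the liminf bound from near-optimal liftings, Theorem \ref{thm:asymp-compact-frac} and identification of the limit trace; choosing a sequence that realizes the $\liminf$ before extracting the compactness subsequence is in fact more careful than the paper's write-up, which extracts a subsequence and then writes $\liminf_{s\to1^-}$. The normalization you ``grant'' needs no separate check, because \eqref{eq:xasymp-form} and the lower semicontinuity in Theorem \ref{thm:asymp-compact-frac} both pair $\frac{C_{d,p,s}}{2}\iint_{\Omega\times\Omega}\frac{|u(x)-u(y)|^p}{|x-y|^{d+sp}}\d y\d x$ with $\|\nabla u\|^p_{L^p(\Omega)}$; hence both of your bounds concern the same limit norm $\|u\|^*_{W^{1,p}(\Omega)}=(\|u\|^p_{L^p(\Omega)}+\|\nabla u\|^p_{L^p(\Omega)})^{1/p}$, which is exactly the norm in the infimum defining $\|g\|_{W^{1-\frac1p,p}(\partial\Omega)}$.
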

\medskip

\begin{proof}
Let us first observe that, for $\frac{1}{p}<\eta\leq s\leq 1$, $\gamma^\eta_0$ and $\gamma^s_0$ both coincide on the dense subspace $C^1(\overline{\Omega})$ and hence,  $\gamma^\eta_0|_{W^{s,p}(\Omega)}=\gamma^s_0$, i.e., $\gamma^\eta_0(v)=\gamma^s_0(v)$ for all $v\in W^{s,p}(\Omega)$. Now, let us fix $u\in W^{1,p}(\Omega)\subset W^{s,p}(\Omega)$ such that  $\gamma^1_0(u)=\gamma^s_0(u)= g$ and hence
\begin{align*}
\|g\|_{W^{s-\frac{1}{p} ,p}(\partial \Omega)}  \leq  \|u\|^*_{W^{s,p}(\Omega)}.
\end{align*}
 Accordingly, since \eqref{eq:xasymp-form} implies $\| u\|^*_{W^{s,p}(\Omega)}\to \| u\|_{W^{1,p}(\Omega)}$ as $s\to 1^-$ we obtain
\begin{align*}
\limsup_{s\to1^-}\|g\|_{W^{s-\frac{1}{p} ,p}(\partial \Omega)}
&\leq \limsup_{s\to1^-}\|u\|^*_{W^{s,p}(\Omega)}
= \| u\|_{W^{1,p}(\Omega)}.
\end{align*}
Therefore, we deduce the limsup inequality
\begin{align}\label{eq:limsup-trace-loc}
\limsup_{s\to1^-}\|g\|_{W^{s-\frac{1}{p} ,p}(\partial \Omega)}  \leq
\|g\|_{W^{1-\frac{1}{p} ,p}(\partial\Omega)}.
\end{align}
For each $\frac{1}{p}<s<1$ there is  $u_s \in W^{s,p}(\Omega)$ such that $\gamma^s_0(u_s)=g$  and
\begin{align*}
\|u_s\|^*_{W^{s,p}(\Omega)}
\leq \|g\|_{W^{s-\frac{1}{p} ,p}(\partial \Omega)}  + 1-s.
\end{align*}
In view of Lemma \ref{lem:frac-unif-bound-trace-bis} there is constant $C=C(d,p,\Omega)>0$ such that
\begin{align}\label{eq:taylor-exapnsion}
\|g\|_{W^{s-\frac{1}{p} ,p}(\partial \Omega)}   \leq  C\|g\|_{W^{1-\frac{1}{p} ,p}(\partial \Omega)} .
\end{align}
It follows that $\gamma^{s}_0(u_s)= g$  and $(u_s)_s$ is asymptotically bounded, i.e.,
\begin{align*}
\sup_{s\in (\frac{1}{p},1)}\|u_s\|^*_{W^{s,p}(\Omega)}\leq 1+C\|g\|_{W^{1-\frac{1}{p} ,p}(\partial \Omega)} .
\end{align*}
By  Theorem \ref{thm:asymp-compact-frac}, there is   a subsequence $(s_n)_n$ with $s_n\to 1$ and $u\in W^{1,p}(\Omega)$ such that,  $\|u_{s_n}- u\|_{W^{\eta, p}(\Omega)}\xrightarrow{s_n\to1^-}0$ for all $0\leq \eta<1$ and
\begin{align*}
\|u\|_{W^{1,p}(\Omega)}
\leq \liminf_{s\to1^-}\|u_s\|^*_{W^{s,p}(\Omega)}
\leq \liminf_{s\to1^-}
\|g\|_{W^{s-\frac{1}{p} ,p}(\partial \Omega)}.
\end{align*}
For $\eta p>1$ and $\eta \leq s_n\leq 1$, we have  $\gamma^\eta_0(u_{s_n})=\gamma^{s_n}_0(u_{s_n})=g$ and $\gamma^{\eta}_0(u)=\gamma^1_0(u)$.
Thus we have $u_{s_n},u\in W^{\eta,p}(\partial\Omega)$,  $g-\gamma^1_0(u)= \gamma^\eta_0( u_{s_n})-\gamma^\eta_0(u)$. If follows that
\begin{align*}
\| g-\gamma^1_0(u)\|_{L^p(\partial \Omega)}
\leq C_\eta \| \gamma^{\eta}_0(u_{s_n}-u)\|_{W^{\eta-\frac{1}{p} ,p}(\partial \Omega)}
\leq \|u_{s_n}-u\|_{W^{\eta,p}(\Omega)}\xrightarrow{s_n\to 1^-}0.
\end{align*}
Thus, it appears that $u\in W^{1,p}(\Omega)$ and
$ \gamma^1_0(u)= g$. Accordingly we deduce that
\begin{align*}
\|g\|_{W^{1-\frac{1}{p} ,p}(\partial\Omega)}
\leq \|u\|_{W^{1,p}(\Omega)}
\leq \liminf_{s\to1^-} \|u_s\|^*_{W^{s,p}(\Omega)}
\leq  \liminf_{s\to1^-} \|g\|_{W^{s-\frac{1}{p} ,p}(\partial\Omega)}.
\end{align*}
Combining this with the limsup in \eqref{eq:limsup-trace-loc}  yields  the sought result
\begin{align*}
\lim_{s\to1^-}
\|g\|_{W^{s-\frac{1}{p} ,p}(\partial\Omega)}
= \|g\|_{W^{1-\frac{1}{p} ,p}(\partial\Omega)}.
\end{align*}
\end{proof}

In the next result we establish the asymptotic compactness for the trace spaces.
\begin{theorem}\label{thm:asymp-compact-frac-trace}
Assume $\Omega \subset \R^d$ is open bounded and Lipschitz, $1<p<\infty$ and let $s_0 \in [\frac{1}{p},1)$. Assume $(g_s)_{s\in[0,1)} \subset L^p(\partial \Omega)$ is a bounded family such that
\begin{align*}
\sup_{s\in(s_0,1)}
\|g_s\|_{W^{s-\frac{1}{p},p}(\partial\Omega)}<\infty.
\end{align*}
Then there exist a subsequence $s_n\to 1^-$ and $g \in W^{1-\frac{1}{p},p}(\partial\Omega)$ such that
\begin{align*}
\|g_{s_n}-g\|_{L^{p}(\partial \Omega)}+ \|g_{s_n}-g\|_{W^{\tau-\frac{1}{p},p}(\partial \Omega)}
\xrightarrow{s_n\to 1^-}0\quad \text{for all $\frac{1}{p}< \tau<1$}.
\end{align*}
Moreover, $g\in W^{1-\frac{1}{p},p}(\partial\Omega)$ and  we have
\begin{align*}
\|g\|_{W^{1-\frac{1}{p},p} (\partial\Omega)}
\leq \liminf_{s_n\to 1^-} \|g_{s_n}\|_{W^{s_n-\frac{1}{p},p}(\partial\Omega)}.
\end{align*}
\end{theorem}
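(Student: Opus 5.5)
The plan is to lift the problem from the boundary to the interior, where Theorem \ref{thm:asymp-compact-frac} already supplies compactness, and then push the information back down through the trace. Concretely, for each $s\in(s_0,1)$ I would use the definition of the quotient norm on $W^{s-\frac{1}{p},p}(\partial\Omega)$ to pick a near-optimal extension $u_s\in W^{s,p}(\Omega)$ with
\begin{align*}
\gamma^s_0(u_s)=g_s\qquad\text{and}\qquad \|u_s\|^*_{W^{s,p}(\Omega)}\leq \|g_s\|_{W^{s-\frac{1}{p},p}(\partial\Omega)}+(1-s).
\end{align*}
By the hypothesis $\sup_{s\in(s_0,1)}\|g_s\|_{W^{s-\frac{1}{p},p}(\partial\Omega)}<\infty$, the family $(u_s)_s$ is then asymptotically bounded in exactly the sense required by Theorem \ref{thm:asymp-compact-frac}.

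Applying Theorem \ref{thm:asymp-compact-frac} gives a subsequence $s_n\to1^-$ and a limit $u\in W^{1,p}(\Omega)$ with the following properties:
\begin{itemize}
\item $\|u_{s_n}-u\|_{W^{\eta,p}(\Omega)}\to0$ for every $0\leq\eta<1$;
\item $\|\nabla u\|^p_{L^p(\Omega)}\leq\liminf_n \frac{C_{d,p,s_n}}{2}\iint_{\Omega\times\Omega}\frac{|u_{s_n}(x)-u_{s_n}(y)|^p}{|x-y|^{d+s_np}}\d y\d x$;
\item $\|\gamma^{s_n}_0(u_{s_n})-\gamma^1_0(u)\|_{L^p(\partial\Omega)}+\|\gamma^{s_n}_0(u_{s_n})-\gamma^1_0(u)\|_{W^{\tau-\frac{1}{p},p}(\partial\Omega)}\to0$ for all $\frac{1}{p}<\tau<1$.
\end{itemize}
Setting $g:=\gamma^1_0(u)$, we have $g\in W^{1-\frac{1}{p},p}(\partial\Omega)$ by Theorem \ref{thm:trace-frac-thm}. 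Since $\gamma^{s_n}_0(u_{s_n})=g_{s_n}$, the third bullet is precisely the claimed convergence of $g_{s_n}$ to $g$ in $L^p(\partial\Omega)$ and in $W^{\tau-\frac{1}{p},p}(\partial\Omega)$.

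For the lower semicontinuity estimate, $u$ is an admissible competitor for $g$, so
\begin{align*}
\|g\|_{W^{1-\frac{1}{p},p}(\partial\Omega)}\leq\|u\|_{W^{1,p}(\Omega)}.
\end{align*}
To compare the right-hand side with $\liminf_n\|u_{s_n}\|^*_{W^{s_n,p}(\Omega)}$, I would combine the $L^p$ convergence $\|u_{s_n}\|_{L^p}\to\|u\|_{L^p}$ with the liminf inequality for the gradient term in the second bullet. The two pieces have to be added at the level of $p$-th powers: use the limit of the $L^p$ part and the liminf of the seminorm part, then apply $\liminf(a_n+b_n)\geq\lim a_n+\liminf b_n$. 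This also relies on $\|\cdot\|_{W^{1,p}}$ being the limiting normalized norm, $\|u\|^*_{W^{1,p}(\Omega)}=\|u\|_{W^{1,p}(\Omega)}$, which follows from \eqref{eq:xasymp-form} together with the constant in \eqref{eq:asymp-normaliz-pconst}. Finally, the choice of $u_s$ gives
\begin{align*}
\liminf_n\|u_{s_n}\|^*_{W^{s_n,p}(\Omega)}\leq\liminf_n\|g_{s_n}\|_{W^{s_n-\frac{1}{p},p}(\partial\Omega)},
\end{align*}
because the slack $(1-s_n)$ tends to zero.

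The main point needing care is this matching of normalizations. The quotient norm is built from $\|\cdot\|^*$ with the constant $\frac{C_{d,p,s}}{2}$, while the limit norm on $W^{1,p}$ must be the one for which the liminf inequality of Theorem \ref{thm:asymp-compact-frac} holds without an extra factor. If the constants did not align, I would fall back to the equivalence \eqref{eq:equiv-norm-start}, which only yields the bound up to a constant; the identity $\|\cdot\|^*_{W^{1,p}}=\|\cdot\|_{W^{1,p}}$ stated earlier is what removes that loss. Everything else follows directly from Theorems \ref{thm:asymp-compact-frac} and \ref{thm:trace-frac-thm}.
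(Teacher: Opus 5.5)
Your proposal is correct and follows the paper's proof essentially step for step. You choose near-optimal liftings $u_s$ with $\|u_s\|^*_{W^{s,p}(\Omega)}\leq \|g_s\|_{W^{s-\frac{1}{p},p}(\partial\Omega)}+1-s$, obtain compactness from Theorem~\ref{thm:asymp-compact-frac}, set $g:=\gamma^1_0(u)$, and use the chain $\|g\|_{W^{1-\frac{1}{p},p}(\partial\Omega)}\leq \|u\|_{W^{1,p}(\Omega)}\leq \liminf_n \|u_{s_n}\|^*_{W^{s_n,p}(\Omega)}$.

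Your separate handling of the $L^p$ and gradient parts is a step the paper leaves implicit. Your normalization caveat is also justified. The argument needs the $s=1$ trace norm to be the infimum of $\big(\|u\|^p_{L^p(\Omega)}+\|\nabla u\|^p_{L^p(\Omega)}\big)^{1/p}$, which is the $s\to1^-$ limit of $\|\cdot\|^*_{W^{s,p}(\Omega)}$ and exactly what the liminf in Theorem~\ref{thm:asymp-compact-frac} controls. With the literal definition of $[u]_{W^{1,p}(\Omega)}$, the gradient term would carry the extra factor $K_{d,p}|\mathbb{S}^{d-1}|/p$.
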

\begin{proof}
For each $g_s$ there is $u_s\in W^{s,p}(\Omega)$ such that $\gamma^s_0 (u)_s= g_s$ and
\begin{align*}
\|u_s\|^*_{W^{s,p}(\Omega)}\leq \|g_s\|_{W^{s-\frac{1}{p}}(\partial\Omega)}+1-s.
\end{align*}
In particular, the family $(\|u_s\|_{W^{s,p}(\Omega)})_{s\in (s_0,1)}$ is uniformly bounded. According to Theorem \ref{thm:asymp-compact-frac} there is $u\in W^{1,p}(\Omega)$ and subsequence $s_n\to 1^-$ such that
\begin{align*}
\|\gamma^{s_n}_0(u_{s_n})-\gamma^1_0(u)\|_{L^{p}(\partial \Omega)}+ \|\gamma_0^{s_n} (u_{s_n})-\gamma^1_0(u)\|_{W^{\tau-\frac{1}{p},p}(\partial \Omega)}
\xrightarrow{s_n\to 1^-}0\quad \text{for all $\frac{1}{p}< \tau<1$}.
\end{align*}
In words since  $g_{s_n}= \gamma^{s_n}_0(u_{s_n)}$, letting   $g=\gamma^{1}_0(u)\in W^{1-\frac{1}{p},p}(\partial\Omega)$ we obtain
\begin{align*}
\|g_{s_n}-g\|_{L^{p}(\partial \Omega)}+ \|g_{s_n}-g\|_{W^{\tau-\frac{1}{p},p}(\partial \Omega)}
\xrightarrow{s_n\to 1^-}0\quad \text{for all $0\leq \tau<1$}.
\end{align*}
Moreover, Theorem \ref{thm:asymp-compact-frac} also implies
\begin{align*}
\|u\|_{W^{1,p}(\Omega)}
&\leq \liminf_{s_n\to 1^-} \|u_{s_n}\|^*_{W^{s_n,p}(\Omega)}\leq
\liminf_{s_n\to 1^-}
\|g_{s_n}\|_{W^{s_n-\frac{1}{p},p}(\partial \Omega)}.
\end{align*}
Therefore, by definition of the trace norm we find that
\begin{align*}
\|g\|_{W^{1-\frac{1}{p},p}(\partial \Omega)}=\|\gamma^1_0 (u)\|_{W^{1-\frac{1}{p},p}(\partial \Omega)}
\leq
\liminf_{s_n\to 1^-}
\|g_{s_n}\|_{W^{s_n-\frac{1}{p},p}(\partial \Omega)}.
\end{align*}
\end{proof}

As a consequence of Theorem \ref{thm:asymp-compact-frac-trace}, we obtain a characterization of $W^{1-\frac{1}{p},p}(\partial\Omega)$ in the spirit of Bourgain, Brezis, and Mironescu \cite{BBM01}.
\begin{theorem}[Characterization of $W^{1-\frac{1}{p},p}(\partial\Omega)$]
\label{thm:charac-frac-trace}
Assume $\Omega \subset \R^d$ is open bounded and Lipschitz. Assume $g\in L^p(\partial \Omega)$, $1<p<\infty$ satisfies
\begin{align*}
\liminf_{s\to 1^-}
\|g\|_{W^{s-\frac{1}{p},p}(\partial\Omega)}<\infty.
\end{align*}
Then we have $g \in W^{1-\frac{1}{p},p}(\partial\Omega)$ and
\begin{align*}
\|g\|_{W^{1-\frac{1}{p},p} (\partial\Omega)}
\leq \liminf_{s\to 1^-} \|g\|_{W^{s-\frac{1}{p},p}(\partial\Omega)}.
\end{align*}
\end{theorem}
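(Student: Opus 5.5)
The plan is to reduce the statement to Theorem \ref{thm:asymp-compact-frac-trace}, applied to a suitable constant family along a subsequence. Set $L:=\liminf_{s\to 1^-}\|g\|_{W^{s-\frac{1}{p},p}(\partial\Omega)}<\infty$. By definition of the liminf, we choose a sequence $s_k\to 1^-$ with $s_k>\frac1p$ such that
\begin{align*}
\|g\|_{W^{s_k-\frac{1}{p},p}(\partial\Omega)}\to L .
\end{align*}
Discarding finitely many terms, we may assume $\sup_k \|g\|_{W^{s_k-\frac{1}{p},p}(\partial\Omega)}\leq L+1$. In particular $g\in W^{s_k-\frac1p,p}(\partial\Omega)$ for every $k$, that is, $g=\gamma^{s_k}_0(v_k)$ for some $v_k\in W^{s_k,p}(\Omega)$.

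Theorem \ref{thm:asymp-compact-frac-trace} is stated for a family indexed by $s\in(s_0,1)$, and its proof only uses the bounded family along a sequence. So I would apply that proof, or equivalently the theorem with $g_{s_k}:=g$, directly to the sequence $s_k$. Concretely, pick $u_k\in W^{s_k,p}(\Omega)$ with $\gamma^{s_k}_0(u_k)=g$ and
\begin{align*}
\|u_k\|^*_{W^{s_k,p}(\Omega)}\leq \|g\|_{W^{s_k-\frac1p,p}(\partial\Omega)}+1-s_k .
\end{align*}
This family is bounded, so Theorem \ref{thm:asymp-compact-frac} gives a further subsequence $s_n$ and $u\in W^{1,p}(\Omega)$ with two properties. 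First, $\gamma^{s_n}_0(u_{n})\to\gamma^1_0(u)$ in $L^p(\partial\Omega)$. Second,
\begin{align*}
\|u\|_{W^{1,p}(\Omega)}\leq\liminf_n\|u_n\|^*_{W^{s_n,p}(\Omega)} .
\end{align*}
Since $\gamma^{s_n}_0(u_n)=g$ for every $n$, the $L^p(\partial\Omega)$-limit is $g$ itself, so $g=\gamma^1_0(u)\in W^{1-\frac1p,p}(\partial\Omega)$.

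For the norm bound, the definition of the trace norm at $s=1$ gives
\begin{align*}
\|g\|_{W^{1-\frac1p,p}(\partial\Omega)}\leq\|u\|_{W^{1,p}(\Omega)}\leq\liminf_n\big(\|g\|_{W^{s_n-\frac1p,p}(\partial\Omega)}+1-s_n\big)=L .
\end{align*}
The equality $=L$ holds because the chosen sequence realizes the liminf. This is exactly the claimed inequality.

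The main obstacle is the lower semicontinuity $\|\nabla u\|_{L^p}^p\leq\liminf$ of the normalized Gagliardo energies, together with $\|u_n\|_{L^p}\to\|u\|_{L^p}$, so that the full norm $\|\cdot\|^*$ is lower semicontinuous. Both are supplied by Theorem \ref{thm:asymp-compact-frac} and the strong $L^p$ convergence. The only delicate bookkeeping is to ensure we work along a sequence realizing the liminf, not an arbitrary one, since otherwise we would only get a bound by the full $\liminf$ of a subsequence, which could be larger.
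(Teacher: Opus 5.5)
Your proposal is correct and is essentially the paper's argument: the paper applies Theorem~\ref{thm:asymp-compact-frac-trace} to the constant family $g_s=g$, and you simply unwind that theorem's proof. Your preliminary step of passing to a sequence $s_k\to 1^-$ that realizes the $\liminf$ is a worthwhile refinement the paper's one-line proof glosses over. The theorem as stated needs $\sup_{s\in(s_0,1)}\|g_s\|_{W^{s-\frac1p,p}(\partial\Omega)}<\infty$, which a constant family need not satisfy when only the $\liminf$ is finite, and it only gives a bound by the $\liminf$ along the extracted subsequence.
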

\begin{proof}
It is sufficient to apply Theorem \ref{thm:asymp-compact-frac-trace} to the constant sequence $g_s=g$.
\end{proof}

\section{General robust Poincar\'e inequalities }\label{sec:gene-fract-poin-ineq}
In this section we deal with various robust Poincar\'e inequalities. We begin with the nonlocal version. A closely related inequality is established with non-robust constant in \cite[Theorem 2.2]{RaZi23} in the setting of Triebel–Lizorkin spaces.

\begin{theorem}[General robust Poincar\'e-Friedrichs]
Given any open bounded  set $\Omega\subset \R^d$  and $1\leq p< \infty$, there is a constant $C=C(d,p,\Omega)>1$ such that
\begin{align*}
[u]_{W^{\eta,p}(\R^d)}\leq C[u]_{W^{s,p}(\R^d)},
\end{align*}
for all $u\in C^\infty_c(\Omega)$  and $0\leq \eta <s \leq 1$.
In particular for $\eta=0$ and $s=1$ yields
\begin{align*}
\|u\|_{L^p(\Omega)}\leq C\|\nabla u\|_{L^p(\Omega)}.
\end{align*}
\end{theorem}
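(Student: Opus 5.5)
Since both seminorms are taken over $\R^d$ and $u\in C^\infty_c(\Omega)$, I would reduce everything to the interpolation inequality of Theorem \ref{thm:robust-interpo-esti} together with a single robust Poincar\'e estimate $\|u\|_{L^p(\R^d)}\le C[u]_{W^{s,p}(\R^d)}$, uniform in $s\in(0,1]$. Granting this endpoint bound, write $\eta=\theta s+(1-\theta)\cdot 0$ with $\theta=\eta/s$. Theorem \ref{thm:robust-interpo-esti} with $\sigma=0$ gives
\begin{align*}
[u]_{W^{\eta,p}(\R^d)}\le 2^{1+\frac2p}[u]_{W^{s,p}(\R^d)}^{\theta}[u]_{W^{0,p}(\R^d)}^{1-\theta}.
\end{align*}
Since $[u]_{W^{0,p}(\R^d)}^p=\frac{2|\mathbb{S}^{d-1}|}{p}\|u\|^p_{L^p}$, inserting the endpoint bound gives $[u]_{W^{\eta,p}}\le C[u]_{W^{s,p}}$ with $C$ independent of $\eta,s$. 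The case $\eta=0$ is just the endpoint bound itself.

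The main step, and the one I expect to be the obstacle, is the endpoint bound with a constant that does not degenerate as $s\to0^+$ or $s\to1^-$. The normalization $s(1-s)$ must be used precisely here.

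\emph{Near $s=0$ and at intermediate $s$.} Pick $R$ with $\Omega\subset B_R$. For $x\in\Omega$ and $|y|>2R$ we have $u(y)=0$ and $|x-y|\le 2|y|$. Hence
\begin{align*}
s(1-s)\iint\frac{|u(x)-u(y)|^p}{|x-y|^{d+sp}}\ge s(1-s)\|u\|_{L^p}^p\int_{|y|>2R}(2|y|)^{-d-sp}\d y=(1-s)\,c_d\,\frac{(4R)^{-sp}}{p}\|u\|_{L^p}^p.
\end{align*}
The factor $s$ cancels against the tail integral. This gives a constant that is uniform for $s\in(0,s_1]$ with any $s_1<1$, but it degenerates as $s\to1$.

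\emph{Near $s=1$.} I would first use the classical Poincar\'e inequality $\|u\|_{L^p}\le C_\Omega\|\nabla u\|_{L^p}$, which is the $s=1$ case. Then I would pass to $s\in(s_1,1)$ by a robust estimate of the gradient from the $W^{s,p}$ seminorm. I would try a contradiction/compactness argument. Suppose there are $u_n\in C_c^\infty(\Omega)$ and $s_n\to1$ with $\|u_n\|_{L^p}=1$ and $[u_n]_{W^{s_n,p}(\R^d)}\to0$. The family is bounded in the sense of Theorem \ref{thm:asymp-compact-frac}, applied on a ball $B\supset\overline\Omega$ and using the $\R^d$ seminorm, which dominates the one on $B$. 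That theorem yields $u_n\to u$ in $L^p(B)$, with $u=0$ off $\Omega$ and $\|u\|_{L^p}=1$. The liminf inequality then forces $\nabla u=0$ on $B$, so $u$ is constant on the connected ball $B$. Since $u$ vanishes on $B\setminus\Omega$, we get $u=0$, a contradiction.

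For $p=1$ the same argument works with the $BV$ version of the liminf inequality. Alternatively, a direct argument via averaging the first-order difference in a fixed direction $e_1$ with $u(x+te_1)=0$ for $t\ge 2R$ would avoid compactness; I would fall back on that if needed.

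Combining the regimes $(0,s_1]$ and $(s_1,1]$ gives the uniform endpoint bound, and the interpolation step finishes the proof. Taking $\eta=0$, $s=1$ recovers the classical Poincar\'e inequality.
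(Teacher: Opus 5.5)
Your proof is correct and follows the paper's route. The paper also combines the robust interpolation inequality with $\sigma=0$ (Theorem \ref{thm:robust-interpo-esti}) with a Poincar\'e bound $[u]_{W^{0,p}(\R^d)}\le B[u]_{W^{s,p}(\R^d)}$ that is uniform in $s$, and it handles $s=1$ by letting $s\to1^-$. The difference is that the paper imports this uniform endpoint bound from \cite[Theorem 10.8]{Fog25}, whereas you prove it yourself: a tail estimate for $s\le s_1$, and a Bourgain--Brezis--Mironescu compactness-and-contradiction argument on a ball for $s$ near $1$. Both steps are sound, so your version is self-contained.
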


\begin{proof}
By \cite[Theorem 10.8]{Fog25} there is $B=B(d,p,\Omega)>1$  such that
\begin{align*}
[u]_{W^{0,p}(\R^d)}\leq B[u]_{W^{s,p}(\R^d)}.
\end{align*}
The latter also provides the inequality in the case $\eta=0$. This together with  the robust interpolation inequality \cite[Theorem 1.1]{Fog26rob} imply that
\begin{align*}
[u]_{W^{\eta,p}(\R^d)}\leq 8[u]^{\frac{\eta}{s}}_{W^{s,p}(\R^d)}[u]^{(1-\frac{\eta}{s})}_{W^{0,p}(\R^d)}
\leq C[u]_{W^{s,p}(\R^d)}.
\end{align*}
The case $s=1$ follows immediately by letting $s\to 1^-$.
\end{proof}

\begin{theorem}[General robust Poincar\'e]
Assume the open set $\Omega\subset \R^d$ is  bounded Lipschitz and connected. Let  $1\leq p< \infty$. Then there is a constant $C=C(d,p,\Omega)>1$ such that for all $u\in L^p(\Omega)$  and $0\leq \eta <s \leq 1$  we have
\begin{align*}
\|u-\mbox{$\fint_\Omega$} u \|_{W^{\eta,p}(\Omega)}\leq s^{-\frac{1}{sp}} \, C[u]_{W^{s,p}(\Omega)}.
\end{align*}
In particular for $\eta=0$ and $s=1$ yields
\begin{align*}
\|u-\mbox{$\fint_\Omega$} u\|_{L^p(\Omega)}\leq C\|\nabla u\|_{L^p(\Omega)}.
\end{align*}
\end{theorem}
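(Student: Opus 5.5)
The plan is to prove the case $\eta=0$ first, namely
\begin{align*}
\|u-\mbox{$\fint_\Omega$} u\|_{L^p(\Omega)}\leq C s^{-\frac{1}{sp}}[u]_{W^{s,p}(\Omega)},
\end{align*}
and then pass to general $\eta<s$ by the robust interpolation inequality of Theorem \ref{thm:robust-interpo-dom}. For the $L^p$ estimate I split the range of $s$ into $s\in(0,s_0]$ and $s\in[s_0,1]$ for some fixed $s_0\in(0,1)$, for instance $s_0=\frac12$.

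\textbf{Small $s$, elementary bound.} By Jensen's inequality, and since $|x-y|\le \operatorname{diam}\Omega$ for $x,y\in\Omega$,
\begin{align*}
|u(x)-\mbox{$\fint_\Omega$} u|^p\leq \frac{1}{|\Omega|}\int_\Omega|u(x)-u(y)|^p\d y\leq \frac{(\operatorname{diam}\Omega)^{d+sp}}{|\Omega|}\int_\Omega\frac{|u(x)-u(y)|^p}{|x-y|^{d+sp}}\d y .
\end{align*}
Integrating in $x$ gives
\begin{align*}
\|u-\mbox{$\fint_\Omega$} u\|^p_{L^p(\Omega)}\leq \frac{\max(1,\operatorname{diam}\Omega)^{d+p}}{|\Omega|\,s(1-s)}[u]^p_{W^{s,p}(\Omega)}.
\end{align*}
For $s\le s_0$ we have $(1-s)^{-1}\le (1-s_0)^{-1}$. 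Moreover $s^{-1/p}\le s^{-1/(sp)}$ because $s\le1$. Together these give the claim on $(0,s_0]$, and this is where the factor $s^{-\frac{1}{sp}}$ comes from.

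\textbf{$s$ near $1$, compactness.} On $[s_0,1]$ the bound above degenerates like $(1-s)^{-1/p}$, so I argue by contradiction. Suppose there are $s_n\in[s_0,1]$ and $u_n$ with $\fint_\Omega u_n=0$, $\|u_n\|_{L^p(\Omega)}=1$ and $[u_n]_{W^{s_n,p}(\Omega)}\to0$. Pass to a subsequence with $s_n\to s_*\in[s_0,1]$.
\begin{itemize}
\item If $s_*=1$, Theorem \ref{thm:asymp-compact-frac} gives $u_n\to u$ in $L^p(\Omega)$ with $u\in W^{1,p}(\Omega)$. Its liminf inequality then gives $\nabla u=0$ (for $p=1$, $|u|_{BV(\Omega)}=0$).
\item If $s_*<1$, Lemma \ref{lem:frac-unif-bound-trace-bis}$(i)$ bounds $\|u_n\|_{W^{t,p}(\Omega)}$ uniformly for a fixed $t<s_0$. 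The compact embedding $W^{t,p}(\Omega)\hookrightarrow L^p(\Omega)$ on the bounded Lipschitz set then gives an $L^p$ limit $u$. Fatou's lemma applied along an a.e. convergent subsequence yields $\iint_{\Omega\times\Omega}|u(x)-u(y)|^p|x-y|^{-d-s_*p}\,\d y\,\d x=0$.
\end{itemize}
In both cases $u$ is constant because $\Omega$ is connected. Since $\fint_\Omega u=0$, $u=0$, which contradicts $\|u\|_{L^p(\Omega)}=1$. This is the step I expect to be the main obstacle: the constant must stay uniform as $s\to1^-$, which forces the case distinction in the limit $s_*$. It is also the only place connectedness is used.

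\textbf{Passing to $\eta>0$.} Put $v=u-\fint_\Omega u$, so that $[v]_{W^{s,p}(\Omega)}=[u]_{W^{s,p}(\Omega)}$. Theorem \ref{thm:robust-interpo-dom} with $\sigma=0$ and $\theta=\eta/s$, followed by Young's inequality, gives
\begin{align*}
\|v\|_{W^{\eta,p}(\Omega)}\leq C\|v\|_{W^{s,p}(\Omega)}^{\eta/s}\|v\|_{W^{0,p}(\Omega)}^{1-\eta/s}\leq C\big(\|v\|_{L^p(\Omega)}+[u]_{W^{s,p}(\Omega)}\big).
\end{align*}
Here $\|v\|_{W^{0,p}(\Omega)}$ is a fixed multiple of $\|v\|_{L^p(\Omega)}$. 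Inserting the $L^p$ estimate, and using $s^{-\frac{1}{sp}}\ge1$, gives the claim.

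\textbf{The case $s=1$.} Here the compactness argument applies directly, or equivalently one lets $s\to1^-$ using \eqref{eq:xasymp-form} together with the fact that $s^{-\frac{1}{sp}}\to1$.
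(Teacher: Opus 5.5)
Your proof is correct. Its second half, interpolation via Theorem \ref{thm:robust-interpo-dom} with $\sigma=0$ and $\theta=\eta/s$, is the paper's; the difference lies in how you obtain the base case $\eta=0$.

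\textbf{The paper's route.} The paper does not prove the base case. It quotes \cite[Corollary 10.4]{Fog25}, the inequality
$\|u-\mbox{$\fint_\Omega$}u\|_{L^p(\Omega)}\le B\big((1-s)\iint_{\Omega\times\Omega}|u(x)-u(y)|^p|x-y|^{-d-sp}\d y\d x\big)^{1/p}=Bs^{-1/p}[u]_{W^{s,p}(\Omega)}$,
with $B$ uniform in $s\in(0,1)$. So the singular factor there is simply the change from the $(1-s)$ to the $s(1-s)$ normalization. The paper then inserts this bound into both factors of the interpolation inequality, where you use Young's inequality instead. It obtains the cases $\eta=0$ and $s=1$ by passing to limits.

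\textbf{Your route.} You build the base inequality from the paper's own tools. For $s\le s_0$ you use the Jensen bound. On $[s_0,1]$ you argue by contradiction, using Theorem \ref{thm:asymp-compact-frac} when $s_n\to1$, and the fixed-order compact embedding plus Fatou when $s_n\to s_*<1$.

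\textbf{Trade-offs.} Your argument is self-contained and shows where each ingredient matters.
\begin{enumerate}[$(i)$]
\item The blow-up as $s\to0^+$ comes only from the trivial Jensen bound.
\item You obtain the sharper factor $s^{-1/p}$, which is also what the paper's ingredients actually deliver.
\item Connectedness is needed only at the local endpoint $s_*=1$. For $s_*<1$, a vanishing regional Gagliardo integral already forces $u$ to be constant on all of $\Omega$.
\end{enumerate}
The price is a non-explicit constant on $[s_0,1]$ and the extra intermediate case $s_*<1$. The cited uniform inequality avoids both.

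\textbf{Two details to state explicitly.} Take $t\in(0,s_0)$ strictly positive for the compact embedding. If infinitely many $s_n$ equal $1$, use Rellich's theorem on that subsequence instead of Theorem \ref{thm:asymp-compact-frac}.
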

\begin{remark}
It is worth noting that the above estimate remains consistent as $s \to 0^+$ for $u \in \bigcup_{0 < s < 1} W^{s,p}(\Omega)$, in the sense that no blow-up occurs as $s \to 0^+$. Indeed, letting $s \to 0^+$, we have $s^{1/s} \to 0$. Moreover, since $\Omega$ is a bounded Lipschitz domain and $0 \leq \eta < s$, \cite[Theorem 4.6]{Fog26rob} implies that $[u]_{W^{s,p}(\Omega)} \to 0$ and $\|u-\mbox{$\fint_\Omega$}u\|_{W^{\eta,p}(\Omega)}\to \|u-\mbox{$\fint_\Omega$}u\|_{L^p(\Omega)}$ as $s\to 0^+$.
\end{remark}

\begin{proof}
According to Theorem \ref{thm:robust-interpo-dom} we have
\begin{align*}
\| u-\mbox{$\fint_\Omega$}u \|_{W^{\eta,p}(\R^d)}\leq  C\|u-\mbox{$\fint_\Omega$}u\|^{\frac{\eta}{s}}_{W^{s,p}(\Omega)}\|u-\mbox{$\fint_\Omega$}u\|^{1-\frac{\eta}{s}}_{L^p(\Omega)}.
\end{align*}
By \cite[Corollary 10.4]{Fog25} there is $B=B(d,p,\Omega)>1$ independent on $s$ such that
\begin{align*}
\|u-\mbox{$\fint_\Omega$}u\|_{L^p(\Omega)}\leq B\Big((1-s)
\iint_{\Omega\times\Omega}\frac{|u(x)-u(y)|^p}{|x-y|^{d+sp}}  \d y \d x\Big)^{1/p}= s^{-1/p} B[u]_{W^{s,p}(\Omega)}.
\end{align*}
Inserting this in the previous  estimate gives
\begin{align*}
\|u-\mbox{$\fint_\Omega$}u\|_{W^{\eta,p}(\Omega)}\leq 2^{\frac{\eta}{sp} } s^{-\frac{1}{sp}} CB[u]_{W^{s,p}(\Omega)}
\leq s^{-\frac{1}{sp}} C[u]_{W^{s,p}(\Omega)},
\end{align*}
where we used the fact that, $[u-\mbox{$\fint_\Omega$}u]_{W^{s,p}(\Omega)}= [u]_{W^{s,p}(\Omega)}$, $\eta/s <1$,  $s^{-1/s}>1$ and $B>1$.  Finally since $\Omega$ is bounded Lipschitz, we have $[u]_{W^{s,p}(\Omega)}\to [u]_{W^{1,p}(\Omega)}$ as $s\to 1^-$ while  by \cite[Theorem 4.6]{Fog26rob} letting $\eta\to 0^+$ implies $\|u-\mbox{$\fint_\Omega$}u\|_{W^{\eta,p}(\Omega)}\to \|u-\mbox{$\fint_\Omega$}u\|_{L^p(\Omega)}$.  Thus we obtain the extreme cases $\eta=0$ and/or $s=1$.
\end{proof}

\begin{theorem}[General robust regional Poincar\'e–Friedrichs]
\label{thm:robust-poinca-fried-regio}
Assume  $\Omega\subset \R^d$ is open bounded with Lipschitz boundary and $1< p <\infty$. Then there exist $s_0= s_0(d,p,\Omega)\in (\frac{1}{p},1 )$  and $C=C(d,p,\Omega)>1$ such that for all $u\in W^{s,p}_0(\Omega)$  and $s_0 \leq s <1$
\begin{align}\label{eq:rob-poincare-fried-regio}
\int_\Omega |u(x)|^p\d x\leq Bs(1-s) \iint_{ \Omega \times \Omega}\frac{|u(x)-u(y)|^p}{|x-y|^{d+sp}}\d y \d x.
\end{align}
Furthermore, for all $s$ and $\eta$ satisfying $0 \leq \max\{s_0, \eta\} < s \leq 1$,
\begin{align}\label{eq:rob-gene-poincare}
\|u\|_{W^{\eta,p}(\Omega)} \leq C [u]_{W^{s,p}(\Omega)} \quad \text{for all } u \in W^{s,p}_0(\Omega).
\end{align}
In particular, setting $\eta = 0$ and $s = 1$ yields the classical Poincar\'e inequality
\begin{align*}
\|u\|_{L^p(\Omega)} \leq C \|\nabla u\|_{L^p(\Omega)}.
\end{align*}
\end{theorem}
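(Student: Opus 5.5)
We argue by contradiction and use the asymptotic compactness of Theorem \ref{thm:asymp-compact-frac}. Suppose \eqref{eq:rob-poincare-fried-regio} fails for every $s_0\in(\frac1p,1)$ and every constant $B$. Then there are sequences $s_n\to1^-$ and $u_n\in W^{s_n,p}_0(\Omega)$ such that
\begin{align*}
\|u_n\|_{L^p(\Omega)}=1,\qquad s_n(1-s_n)\iint_{\Omega\times\Omega}\frac{|u_n(x)-u_n(y)|^p}{|x-y|^{d+s_np}}\d y\d x\le \frac1n .
\end{align*}
Because $C_{d,p,s}\asymp s(1-s)$, the family $(u_n)_n$ is asymptotically bounded in the sense of Theorem \ref{thm:asymp-compact-frac}. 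That theorem gives a subsequence, still denoted $s_n$, and some $u\in W^{1,p}(\Omega)$ with $u_{s_n}\to u$ in $L^p(\Omega)$, so that $\|u\|_{L^p(\Omega)}=1$. By the liminf inequality, $\|\nabla u\|_{L^p(\Omega)}^p\le\liminf \frac{C_{d,p,s_n}}{2}\iint\cdots=0$, hence $\nabla u=0$ and $u$ is constant on each connected component of $\Omega$.

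The step I expect to be the main obstacle is identifying the boundary values of $u$, and this is where the trace convergence in Theorem \ref{thm:asymp-compact-frac} enters. Since $u_n\in W^{s_n,p}_0(\Omega)$, we have $\gamma^{s_n}_0(u_n)=0$. The third item of Theorem \ref{thm:asymp-compact-frac}, taken with any fixed $\tau\in(\frac1p,1)$, gives $\|\gamma^{s_n}_0(u_n)-\gamma^1_0(u)\|_{L^p(\partial\Omega)}\to0$, so $\gamma^1_0(u)=0$ and $u\in W^{1,p}_0(\Omega)$. Because $\Omega$ is bounded, each connected component $\Omega_i$ of $\Omega$ has nonempty boundary $\partial\Omega_i\subset\partial\Omega$. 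On $\Omega_i$ the function $u$ equals a constant $c_i$. Since $u\in W^{1,p}_0(\Omega)$, its restriction to $\Omega_i$ lies in $W^{1,p}_0(\Omega_i)$; equivalently, the extension of $u$ by zero belongs to $W^{1,p}(\R^d)$ and has zero gradient on $\R^d\setminus\overline{\Omega}$. Either way we get $c_i=0$. This step needs no connectedness assumption, which is exactly why the hypothesis can be dropped when $\Gamma_0=\partial\Omega$. We conclude $u=0$, which contradicts $\|u\|_{L^p(\Omega)}=1$. This proves \eqref{eq:rob-poincare-fried-regio} for all $s\in[s_0,1)$, with uniform constants $s_0$ and $B$.

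For \eqref{eq:rob-gene-poincare}, let $u\in W^{s,p}_0(\Omega)$ with $s\ge s_0$. By \eqref{eq:rob-poincare-fried-regio}, $\|u\|_{L^p(\Omega)}\le B^{1/p}[u]_{W^{s,p}(\Omega)}$, and therefore $\|u\|_{W^{s,p}(\Omega)}\le C[u]_{W^{s,p}(\Omega)}$. Lemma \ref{lem:frac-unif-bound-trace-bis}(i), which relies on the robust interpolation inequality of Theorem \ref{thm:robust-interpo-dom}, then gives $\|u\|_{W^{\eta,p}(\Omega)}\le C\|u\|_{W^{s,p}(\Omega)}\le C[u]_{W^{s,p}(\Omega)}$ for every $0\le\eta<s$.

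The case $s=1$ can be obtained in two ways. One is to pass to the limit $s\to1^-$ for a fixed $u\in W^{1,p}_0(\Omega)\subset W^{s,p}_0(\Omega)$, using \eqref{eq:xasymp-form}, that is $[u]_{W^{s,p}(\Omega)}\to[u]_{W^{1,p}(\Omega)}$. The other is to note that the classical Poincaré inequality already holds on a bounded set. Taking $\eta=0$ and $s=1$ then yields $\|u\|_{L^p(\Omega)}\le C\|\nabla u\|_{L^p(\Omega)}$.
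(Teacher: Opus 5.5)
Your proposal is correct and follows essentially the same route as the paper. You argue by contradiction with the asymptotic compactness Theorem~\ref{thm:asymp-compact-frac}, use the trace convergence to get $u\in W^{1,p}_0(\Omega)$ with $\nabla u=0$, kill $u$ via the zero extension, and then obtain \eqref{eq:rob-gene-poincare} from robust interpolation (which you invoke through Lemma~\ref{lem:frac-unif-bound-trace-bis}(i)), adding an explicit treatment of the endpoint $s=1$ that the paper leaves implicit.
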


\begin{proof}
Assume there exist no such $s_0$ and $B$. For $n \geq 1$ large enough, taking $s_0 = 1-\frac{1}{2^n}>\frac{1}{p}$ and $ = 2^n$, there exist $s_n \in (1-\frac{1}{2^n},1)$ and $u_{s_n} \in W^{s_n, p}_0(\Omega)$ such that
\begin{align*}
\|u_{s_n}\|_{L^p(\Omega)} = 1\quad\text{and}\quad s_n(1-s_n)\iint_{ \Omega \times \Omega}\frac{|u_{s_n}(x)-u_{s_n}(y)|^p}{|x-y|^{d+s_np}}\d y \d x<\frac{1}{2^n}.
\end{align*}
In particular, $\gamma^{s_n}_0 (u_{s_n})=0$ as $u_{s_n}\in W^{s_n, p}_0(\Omega)$,  and
\begin{align*}
\sup_{n\geq 1}\Big( \int_\Omega |u_{s_n}(x)|^p\d x+ s_n(1-s_n)\iint_{ \Omega \times \Omega}\frac{|u_{s_n}(x)-u_{s_n}(y)|^p}{|x-y|^{d+s_np}}\d y \d x\Big)<2.
\end{align*}
According to Theorem \ref{thm:asymp-compact-frac} there is $u \in W^{1,p}(\Omega)$ and a subsequence still denoted $(u_{s_n})_n$ converging to $u$ in $L^p(\Omega)$ . Moreover,  since $\gamma^{s_n}_0(u_{s_n})=0$ we have
\begin{align*}
&K_{d,p} \frac{|\mathbb{S}^{d-1}|}{p}\|\nabla u\|_{L^p(\Omega)}^p \leq \liminf_{n\to\infty} s_n(1-s_n)\iint_{ \Omega \times \Omega}\frac{|u_{s_n}(x)-u_{s_n}(y)|^p}{|x-y|^{d+s_np}}\d y \d x = 0,\\
&\|\gamma^1_0(u)\|_{W^{\tau-\frac{1}{p},p}(\partial\Omega)}
=\|\gamma^{s_n}_0(u_{s_n}) -\gamma^1_0(u)\|_{W^{\tau-\frac{1}{p},p}(\partial\Omega)}\xrightarrow{s_n\to 1^-}0\qquad\text{for all $\frac1{p} <\tau<1$}.
\end{align*}
It follows that  $\nabla u = 0$ a.e. in $\Omega$ and $\gamma^1_0 (u)=0 $. Thence, since  $\partial\Omega$ is Lipschitz, we deduce that  $u \in W^{1,p}_0(\Omega)$ and $\|u\|_{L^p(\Omega)}=1$. Consequently,  the zero extension $\widetilde{u}$ of $u$ belongs to  $ W^{1,p}(\R^d)$ and satisfies $\nabla \widetilde{u}=  \widetilde{\nabla u}= 0$ a.e. in $\R^d$. It follows that $\widetilde{u}$ is constant on $\R^d$. We deduce that $\widetilde{u}\equiv0$ on $\R^d$, since $\widetilde{u}= 0$ on $\R^d \setminus \Omega$. This clearly implies that  $u \equiv 0$ on $\Omega$. This goes against  the fact that $\|u\|_{L^p(\Omega)}=1$  and thus our initial assumption was wrong. Next, the inequality \eqref{eq:rob-gene-poincare} is merely  obtained  by combining  the robust interpolation from Theorem \ref{thm:robust-interpo-dom}  and the case the case $\eta = 0$ yielded by Theorem \ref{thm:robust-poinca-fried-regio}, as follows
\begin{align*}
\| u\|_{W^{\eta,p}(\R^d)}
&\leq  C\|u\|^{\frac{\eta}{s}}_{W^{s,p}(\Omega)}\|u\|^{1-\frac{\eta}{s}}_{L^p(\Omega)}\qquad(0\leq \eta <s \leq 1)\\
&\leq CB[u]_{W^{s,p}(\Omega)}\qquad \quad (0\leq \max(s_0,\eta) <s \leq 1),
\end{align*}
where  $s_0\in( \frac{1}{p},1)$ and $B>1$ only depend on $d,p$ and $\Omega$.
\end{proof}

\begin{theorem}[Robust Friedrichs Inequality I]
\label{thm:rob-friedrichs}
Let $\Omega \subset \R^d$ be a bounded, connected, open set with a compact Lipschitz boundary $\partial\Omega$, and let $1 < p < \infty$. Let $\Gamma_0 \subset \partial\Omega$ be a subset of the boundary with positive $(d-1)$-dimensional Hausdorff measure $\mathcal{H}^{d-1}(\Gamma_0)>0$. Then there exist constants $s_0 = s_0(d,p,\Omega, \Gamma_0) \in (\frac{1}{p}, 1)$ and $C = C(d,p,\Omega, \Gamma_0) > 0$ such that, for all $s \in (s_0, 1)$ and all $u \in W^{s,p}(\Omega)$,
\begin{align}\label{eq:robust_friedrichs_1}
\int_\Omega |u(x)|^p \d x \leq C s(1-s) \iint_{\Omega \times \Omega} \frac{|u(x)-u(y)|^p}{|x-y|^{d+sp}} \d y \d x + C \Big| \int_{\Gamma_0} \gamma^s_0 (u)\d\sigma  \Big|^p.
\end{align}
Equivalently we have that, for all $s \in (s_0, 1)$ and all $u \in W^{s,p}(\Omega)$,
\begin{align}\label{eq:robust_friedrichs_2}
\int_\Omega \Big| u(x) - \tfrac{1}{\mathcal{H}^{d-1}(\Gamma_0)}\int_{\Gamma_0} \gamma^s_0 (u) \d\sigma  \Big|^p \d x \leq C (1-s) \iint_{\Omega \times \Omega} \frac{|u(x)-u(y)|^p}{|x-y|^{d+sp}} \d y \d x.
\end{align}
Furthermore, for all $s$ and $\eta$ satisfying $0 \leq \max\{s_0, \eta\} < s \leq 1$, we have
\begin{align}\label{eq:generalized_friedrichs-bis}
\|u -\mbox{$\tfrac{1}{\mathcal{H}^{d-1}(\Gamma_0)}\int_{\Gamma_0}$} \gamma^s_0(u)\d\sigma \|_{W^{\eta,p}(\Omega)} \leq C [u]_{W^{s,p}(\Omega)} \quad \text{for all } u \in W^{s,p}(\Omega).
\end{align}
\end{theorem}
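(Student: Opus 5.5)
The plan is to argue by contradiction and compactness, in the same way as the proof of Theorem \ref{thm:robust-poinca-fried-regio}, and then deduce the other two inequalities.

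\textbf{Step 1: proof of \eqref{eq:robust_friedrichs_1} by contradiction.} Suppose no pair $(s_0,C)$ works. Then for each $n$ large (so that $1-2^{-n}>\frac1p$) there are $s_n\in(1-2^{-n},1)$ and $u_n\in W^{s_n,p}(\Omega)$, normalized so that $\|u_n\|_{L^p(\Omega)}=1$, with
\begin{align*}
s_n(1-s_n)\iint_{\Omega\times\Omega}\frac{|u_n(x)-u_n(y)|^p}{|x-y|^{d+s_np}}\d y\d x+\Big|\int_{\Gamma_0}\gamma^{s_n}_0(u_n)\d\sigma\Big|^p<2^{-n}.
\end{align*}
The family is then asymptotically bounded in the sense of Theorem \ref{thm:asymp-compact-frac}. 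That theorem gives, along a subsequence, a limit $u\in W^{1,p}(\Omega)$ with the following properties:
\begin{itemize}
\item $u_n\to u$ in $L^p(\Omega)$, hence $\|u\|_{L^p(\Omega)}=1$;
\item by the liminf inequality, $\|\nabla u\|_{L^p(\Omega)}=0$;
\item $\gamma^{s_n}_0(u_n)\to\gamma^1_0(u)$ in $L^p(\partial\Omega)$.
\end{itemize}
Since $\Omega$ is connected and $\nabla u=0$, $u$ equals a constant $c$ a.e., and $\gamma^1_0(u)=c$ on $\partial\Omega$. Because $\Gamma_0$ has finite measure, convergence in $L^p(\partial\Omega)$ gives $\int_{\Gamma_0}\gamma^{s_n}_0(u_n)\d\sigma\to c\,\mathcal H^{d-1}(\Gamma_0)$ by Hölder's inequality. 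The left side tends to $0$, and $\mathcal H^{d-1}(\Gamma_0)>0$, so $c=0$. This contradicts $\|u\|_{L^p(\Omega)}=1$.

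\textbf{Step 2: the equivalent form \eqref{eq:robust_friedrichs_2}.} Apply \eqref{eq:robust_friedrichs_1} to $v=u-m$, where $m=\frac{1}{\mathcal H^{d-1}(\Gamma_0)}\int_{\Gamma_0}\gamma^s_0(u)\d\sigma$.
\begin{itemize}
\item Constants are in $C^1(\overline\Omega)$ and have trace equal to themselves, so $\int_{\Gamma_0}\gamma^s_0(v)\d\sigma=0$.
\item The Gagliardo seminorm is invariant under adding constants.
\item The factor $s\in(s_0,1)$ is bounded below by $s_0>\frac1p$, so it can be absorbed into $C$.
\end{itemize}
For the converse direction, $|m|^p|\Omega|\le C|\int_{\Gamma_0}\gamma^s_0(u)\d\sigma|^p$, so the triangle inequality recovers \eqref{eq:robust_friedrichs_1}.

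\textbf{Step 3: the $W^{\eta,p}$ estimate \eqref{eq:generalized_friedrichs-bis}.} Apply Theorem \ref{thm:robust-interpo-dom} to $v=u-m$ with $\sigma=0$ and $\theta=\eta/s$:
\begin{align*}
\|v\|_{W^{\eta,p}(\Omega)}\le C\|v\|_{W^{s,p}(\Omega)}^{\eta/s}\|v\|_{L^p(\Omega)}^{1-\eta/s}.
\end{align*}
By \eqref{eq:robust_friedrichs_2}, $\|v\|_{L^p(\Omega)}\le C[u]_{W^{s,p}(\Omega)}$, and therefore $\|v\|_{W^{s,p}(\Omega)}\le C[u]_{W^{s,p}(\Omega)}$ as well. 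Combining the two bounds gives \eqref{eq:generalized_friedrichs-bis}.

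For $s=1$, the classical Friedrichs inequality follows either by the same compactness argument at $s=1$ or by passing to the limit $s\to1^-$, using \eqref{eq:xasymp-form} and the convergence of traces.

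\textbf{Main obstacle.} The delicate point is the trace convergence in Step 1. It relies on the third item of Theorem \ref{thm:asymp-compact-frac}, which holds because the varying traces $\gamma^{s_n}_0$ agree with a fixed $\gamma^\tau_0$ for $\tau\in(\frac1p,s_n)$, and $u_n\to u$ in $W^{\tau,p}(\Omega)$. This is exactly where $s_0>\frac1p$ is needed. Connectedness of $\Omega$ is used only to conclude that $u$ is constant.
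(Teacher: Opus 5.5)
Your proposal is correct and follows the paper's proof of \eqref{eq:robust_friedrichs_1}: the same contradiction argument with $C_n=2^n$ and $L^p$-normalized $u_{s_n}$, compactness via Theorem \ref{thm:asymp-compact-frac}, connectedness forcing a constant limit, and trace convergence forcing that constant to vanish. Your Steps 2 and 3 add the routine derivations of \eqref{eq:robust_friedrichs_2} and \eqref{eq:generalized_friedrichs-bis}. The paper leaves these implicit, but they mirror its interpolation argument in Theorem \ref{thm:robust-poinca-fried-regio}.
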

\begin{proof}
We proceed by contradiction. Assume that no such constants $s_0$ and $C$ exist. Then, for each integer $n \geq 1$ sufficiently large such that $s_0 =1 - \frac{1}{2^n} > \frac{1}{p}$ and $C_n =2^n$, there exist $s_n \in (1 - \frac{1}{2^n}, 1)$ and $u_{s_n} \in W^{s_n, p}(\Omega)$ satisfying $\|u_{s_n}\|_{L^p(\Omega)} = 1$ and
\begin{align*}
s_n(1-s_n) \iint_{\Omega \times \Omega} \frac{|u_{s_n}(x)-u_{s_n}(y)|^p}{|x-y|^{d+s_n p}} \d y \d x + \Big| \int_{\Gamma_0} \gamma^{s_n}_0 (u_{s_n}) \d\sigma  \Big|^p < \frac{1}{2^n}.
\end{align*}
Consequently, the sequence $(u_{s_n})_{n}$ satisfies the uniform bound
\begin{align*}
\sup_{n \geq 1} \left( \int_\Omega |u_{s_n}(x)|^p \d x + s_n(1-s_n) \iint_{\Omega \times \Omega} \frac{|u_{s_n}(x)-u_{s_n}(y)|^p}{|x-y|^{d+s_n p}} \d y \d x \right) < 2.
\end{align*}
According to Theorem \ref{thm:asymp-compact-frac}, there exist a function $u \in W^{1,p}(\Omega)$ and a subsequence still denoted by $(u_{s_n})_n$, such that $u_{s_n} \to u$ strongly in $W^{\eta, p}(\Omega),$ $0\leq \eta<1$  and
\begin{align*}
&K_{d,p} \frac{|\mathbb{S}^{d-1}|}{p} \|\nabla u\|_{L^p(\Omega)}^p \leq \liminf_{n \to \infty} s_n(1-s_n) \iint_{\Omega \times \Omega} \frac{|u_{s_n}(x)-u_{s_n}(y)|^p}{|x-y|^{d+s_n p}} \d y \d x = 0,\\
&\|\gamma^{s_n}_0(u)-\gamma^1_0(u)\|_{L^{p}(\partial\Omega)} + \|u_{s_n}-u\|_{L^p(\partial\Omega)} \xrightarrow{n \to \infty} 0 .
\end{align*}
We immediately find that $\|u\|_{L^p(\Omega)} = 1$ and $u \equiv c$ on $\Omega$ for some $c \in \mathbb{R}$ since $\Omega$ is connected and $\nabla u=0$ a.e. on $\Omega$. Moreover, the convergence of the traces  implies
\begin{align*}
\Big|\int_{\Gamma_0} \gamma^1_0 (u) \d\sigma  \Big|\leq \Big|  \int_{\Gamma_0} \gamma^{s_n}_0 (u_{s_n})\d\sigma  \Big|+|\partial\Omega|^{1-\frac{1}{p}}\|\gamma^{s_n}_0(u)-\gamma^1_0(u)\|_{L^{p}(\partial\Omega)}  \xrightarrow{n\to \infty} 0.
\end{align*}
Given that $u\equiv c$ on  $\Omega$ and hence $\gamma^1_0 (u) \equiv c$ on $\partial\Omega$, it follows that
\begin{align*}
0 = \int_{\Gamma_0} \gamma^1_0( u)\d\sigma  = \int_{\Gamma_0} c \d\sigma  = c  \mathcal{H}^{d-1}(\Gamma_0).
\end{align*}
Since  $\mathcal{H}^{d-1}(\Gamma_0)> 0$, we must have $c = 0$. Thus $u \equiv 0$ on $\Omega$. This contradicts the fact that $\|u\|_{L^p(\Omega)} = 1$. Thus, the initial assumption is false, completing the proof.
\end{proof}

The following Theorem is an immediate consequence of Theorem \ref{thm:rob-friedrichs}.
\begin{theorem}[Robust Friedrichs Inequality II]
\label{thm:rob-friedrichs-zero}
Let $\Omega \subset \R^d$ be open bounded Lipschitz and connected and let $1 < p < \infty$. Let $\Gamma_0 \subset \partial\Omega$ be a subset of the boundary with positive $(d-1)$-dimensional Hausdorff measure $\mathcal{H}^{d-1}(\Gamma_0)>0$.
Then there exist a constant $C = C(d,p,\Omega, \Gamma_0) > 0$  and $s_0= s_0(d,p,\Omega, \Gamma_0) \in (\frac{1}{p}, 1)$ such that,
\begin{align}\label{eq:robust_friedrichs_vanishing}
\int_\Omega |u(x)|^p \d x \leq C s(1-s)\iint_{\Omega \times \Omega} \frac{|u(x)-u(y)|^p}{|x-y|^{d+sp}} \d y \d x,
\end{align}
for all $s \in (s_0, 1)$ and all $u \in W^{s,p}_{\Gamma_0}(\Omega)$, where we define
\begin{align*}
W^{s,p}_{\Gamma_0}(\Omega)
:= \big\{ u \in W^{s,p}(\Omega) : \gamma^s_0 (u) = 0 \text{ a.e. on } \Gamma_0 \big\}.
\end{align*}
Furthermore, for all $s$ and $\eta$ satisfying $0 \leq \max\{s_0, \eta\} < s \leq 1$, we have
\begin{align}\label{eq:generalized_friedrichs}
\|u\|_{W^{\eta,p}(\Omega)} \leq C [u]_{W^{s,p}(\Omega)} \quad \text{for all } u \in W^{s,p}_{\Gamma_0}(\Omega).
\end{align}
\end{theorem}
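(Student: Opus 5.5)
The plan is to derive Theorem \ref{thm:rob-friedrichs-zero} directly from Theorem \ref{thm:rob-friedrichs}, exploiting the fact that the boundary term there vanishes on $W^{s,p}_{\Gamma_0}(\Omega)$.

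First, take $s_0$ and $C$ from Theorem \ref{thm:rob-friedrichs}. Let $s\in(s_0,1)$ and $u\in W^{s,p}_{\Gamma_0}(\Omega)$. Then $\gamma^s_0(u)=0$ $\mathcal{H}^{d-1}$-a.e. on $\Gamma_0$, so $\int_{\Gamma_0}\gamma^s_0(u)\d\sigma=0$. This integral is well defined, since $\gamma^s_0(u)\in L^p(\partial\Omega)$ by Theorem \ref{thm:trace-frac-thm} and $\partial\Omega$ has finite measure. Inserting this into \eqref{eq:robust_friedrichs_1} kills the boundary term and gives \eqref{eq:robust_friedrichs_vanishing} with the same constants.

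Second, for \eqref{eq:generalized_friedrichs} I would either apply \eqref{eq:generalized_friedrichs-bis} directly, where the subtracted mean is again zero, or argue as at the end of the proof of Theorem \ref{thm:robust-poinca-fried-regio}. In the latter route, Theorem \ref{thm:robust-interpo-dom} with $\sigma=0$ and $\theta=\eta/s$ gives
\begin{align*}
\|u\|_{W^{\eta,p}(\Omega)}\leq C\|u\|_{W^{s,p}(\Omega)}^{\eta/s}\|u\|_{L^p(\Omega)}^{1-\eta/s}.
\end{align*}
The $\eta=0$ estimate already proved gives $\|u\|_{L^p(\Omega)}\leq C[u]_{W^{s,p}(\Omega)}$, and hence also $\|u\|_{W^{s,p}(\Omega)}\leq C[u]_{W^{s,p}(\Omega)}$. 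Combining these two bounds yields \eqref{eq:generalized_friedrichs} for $\max\{s_0,\eta\}<s<1$, with a constant independent of $s$ and $\eta$. The $\eta=0$ case uses $\|\cdot\|_{W^{0,p}}\asymp\|\cdot\|_{L^p}$, which holds up to a dimensional factor.

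The main obstacle is the endpoint $s=1$. To handle it, I would take $u\in W^{1,p}(\Omega)$ with $\gamma^1_0(u)=0$ on $\Gamma_0$. Since $\gamma^s_0|_{W^{1,p}(\Omega)}=\gamma^1_0$ (Remark \ref{rem:trace-mono}), $u$ lies in $W^{s,p}_{\Gamma_0}(\Omega)$ for every $s<1$. Letting $s\to1^-$ and using \eqref{eq:xasymp-form}, i.e. $[u]_{W^{s,p}(\Omega)}\to[u]_{W^{1,p}(\Omega)}$, then gives the inequality at $s=1$. For $\eta$ fixed the left-hand side does not depend on $s$; for $\eta=0$ this yields the classical Friedrichs inequality. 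Since everything reduces to the earlier theorem, I expect no substantive difficulty beyond checking that the trace integral vanishes and that the constants are uniform in $s$ and $\eta$.
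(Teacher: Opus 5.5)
Your proposal is correct and follows the paper's route: the paper records Theorem \ref{thm:rob-friedrichs-zero} as an immediate consequence of Theorem \ref{thm:rob-friedrichs}, since $\int_{\Gamma_0}\gamma^s_0(u)\d\sigma=0$ for $u\in W^{s,p}_{\Gamma_0}(\Omega)$. Your interpolation step via Theorem \ref{thm:robust-interpo-dom}, and your limit $s\to1^-$ for the endpoint $s=1$ (using $[u]_{W^{s,p}(\Omega)}\to[u]_{W^{1,p}(\Omega)}$ and $\gamma^s_0|_{W^{1,p}(\Omega)}=\gamma^1_0$), supply details that the paper leaves unstated.
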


\section{Proofs of the main results}\label{sec:opti-conv-regio-frac}
In this section, we show the convergence of weak solutions to the Dirichlet problem driven by the normalized regional fractional $p$-Laplacian $(-\Delta)_{p,\Omega}^s$, for $0 < s \leq 1$.
\subsection{Well-posedness of weak solutions }  \label{subsec:regional-frac-welposedness}
For $sp>1$, $f\in (W^{s,p}_0(\Omega))'$ and $g\in W^{s-\frac{1}{p}, p}(\partial \Omega)$, we establish the well-posedness of the Dirichlet  problem
\begin{align}\label{eq:regional-dirichlet}\tag{$D_s$}
(-\Delta)^s_{p,\Omega} u = f \quad\text{in}~~ \Omega \quad\quad\text{ and } \quad\quad  \gamma^s_0(u)=g ~~~ \text{on}~~ \partial\Omega.
\end{align}
A formal heuristic calculation reveals that for $u\in W^{s,p}(\Omega)$  and $v\in C^\infty_c(\Omega)$ we have
\begin{align*}
\int_\Omega (-\Delta)^s_{p,\Omega} u \, v \d x = \cE^{s,p}_\Omega(u,v),
\end{align*}
where we recall that  the form  $\cE^{s,p}_\Omega(\cdot,\cdot)$ associated to $(-\Delta)_{p,\Omega}^s$ are given by
\begin{align*}
\cE^{s,p}_{\Omega}(u,v) &:= \frac{C_{d,p,s}}{2} \iint_{\Omega \times \Omega}  \frac{|u(y)-u(x)|^{p-2}(u(y)-u(x)) }{|x-y|^{d+sp}} (v(y)-v(x))\d y \, \d x,\\
\mathcal{E}^{1,p}_{\Omega}(u,v)&:= \int_\Omega |\nabla u(x)|^{p-2}\nabla u(x)\cdot \nabla v(x)\d x.
\end{align*}
Thus, we can formally  identify $(-\Delta)^s_{p,\Omega} u\equiv \cE^{s,p}_\Omega(u,\cdot)$.
From this is legitimate to say that   $u \in W^{s,p}(\Omega)$ is a weak solution of the Dirichlet problem \eqref{eq:regional-dirichlet} if
\begin{align}
\label{eq:var-regio-frac-dirichlet}\tag{$V_s$}
u-g\in W^{s,p}_0(\Omega)\quad \text{and}\quad \cE^{s,p}_\Omega(u,v) = \langle f , v \rangle_s   \quad \mbox{for all}~~v \in W^{s,p}_0(\Omega)\,.
\end{align}
Actually, by a mere adaptation of \cite[Proposition 8.17]{Fog25} one arrives at the conclusion that, for any  $\overline{g} \in W^{s,p}(\Omega)$ with, $\gamma^s_0(\overline{g}) = g$, a function  $u \in W^{s,p}(\Omega)$ is a solution to the variational problem \eqref{eq:var-regio-frac-dirichlet} if and only if $u$ verifies the minimization
\begin{align}\label{eq:min-regio-frac-dirichlet}
\tag{$M_s$}
\frac{1}{p} \cE^{s,p}_\Omega (u,u) - \langle f , u-\overline{g}\rangle_s= \min_{v \in \overline{g}+ W^{s,p}_0(\Omega)} \Big\{ \frac{1}{p} \cE^{s,p}_\Omega (v,v) - \langle f , v-\overline{g}\rangle_s \Big\}.
\end{align}
Moreover,  the  energy functional $v\mapsto \frac{1}{p} \cE^{s,p}_\Omega (v,v) - \langle f , v-\overline{g}\rangle_s$ is strictly convex, and its coercivity is guaranteed by the Friedrichs–Poincaré inequality \eqref{eq:rob-poincare-fried-regio}.  These properties ensure the existence and uniqueness of a unique minimizer $u$. In particular, $u$ is independent of the specific choice of the extension $\overline{g}$. Consequently, one can  obtain the following well-posedness result by adapting \cite[Theorem 8.18]{Fog25}.
\begin{theorem}
\label{thm:nonlocal-dirichlet-gen}
Assume $\Omega\subset \R^d$  is bounded Lipschitz and $sp>1$. Let $f\in (W^{s,p}_0(\Omega))'$ and $g\in W^{s-\frac{1}{p},p}(\partial \Omega)$.   There is $C= C(d,p,\Omega)>0$ independent of $s$ such that, the following assertions hold.
\begin{enumerate}[$(i)$]
\item \textbf{Existence}. The variational problem \eqref{eq:var-regio-frac-dirichlet} has a unique  solution $u\in W^{s,p}(\Omega)$.
\item \textbf{Boundedness}. For any $\overline{g}\in W^{s,p}(\Omega)$ with $\gamma^s_0(\overline{g})=g$ we have
\begin{align}\label{eq:energy-bound-D}
\cE^{s,p}_\Omega(u,u)
&\leq C(\|f\|_{(W^{s,p}_0(\Omega))'} ^{p'} + \cE^{s,p}_\Omega(\overline{g},\overline{g})),
\\
\label{eq:sol-bd-regio-frac-D}
\|u \|_{W^{s,p}(\Omega)}&\leq C \big(\|f\|^{p'}_{(W^{s,p}_0(\Omega))'}+\|g\|^p_{ W^{s-\frac{1}{p},p}(\partial \Omega)}\big)^{1/p}.
\end{align}
\item \textbf{Continuity}.
Let $u_i$ be the solution associated with $f=f_i$ and $g=g_i$, $i=1,2$. Let us put
$D=D(f_1,f_2,g_1,g_2)=\sum_{i=1}^2  \big(\|f_i\|^{p'}_{(W^{s,p}_0(\Omega))'}+\|g_i\|^p_{ W^{s-\frac{1}{p},p}(\partial \Omega)}\big)^{1/p'}$
\end{enumerate}
If $p\geq 2$ we have
\begin{align*}
\|u_1-u_2\|_{W^{s,p}(\Omega)}
\leq C\Big(\|f_1-&f_2\|^{1/(p-1)}_{(W^{s,p}_0(\Omega))'} +\|g_1-g_2\|_{ W^{s-\frac{1}{p},p}(\partial \Omega)}\hspace{-1ex}\\
&+  D^{1/p}\|g_1-g_2\|^{1/p}_{ W^{s-\frac{1}{p},p}(\partial \Omega)}\Big).
\end{align*}
If $1<p<2$ , then for $R=	(D^{\frac{1}{p-1}} + D)^{1/2}$ we have
\begin{align*}
\|u_1-u_2\|_{W^{s,p}(\Omega)} \leq C \Big(D^{\frac{2-p}{p-1}} &\|f_1-f_2\|_{(W^{s,p}_0(\Omega))'}+ \|g_1-g_2\|_{ W^{s-\frac{1}{p},p}(\partial \Omega)}\hspace{-1.4ex}\\
&+
R\|g_1-g_2\|_{ W^{s-\frac{1}{p},p}(\partial \Omega)}^{1/2}\Big).
\end{align*}
\end{theorem}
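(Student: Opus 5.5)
The plan is the direct method of the calculus of variations for existence and the energy bound, followed by monotonicity inequalities for the $p$-Laplacian-type form to get continuity. All constants are kept independent of $s$ by using the robust Friedrichs–Poincaré inequality \eqref{eq:rob-poincare-fried-regio} and the norm equivalence \eqref{eq:equiv-norm-start}. Throughout, $s$ is understood to lie in $[s_0,1]$ with $s_0$ from Theorem \ref{thm:robust-poinca-fried-regio}; for $s$ in the compact range $(\frac1p,s_0)$ one uses the usual non-robust Poincaré inequality, so only the regime $s\to1^-$ requires care.

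\textbf{Existence.} Fix $\overline{g}\in W^{s,p}(\Omega)$ with $\gamma^s_0(\overline g)=g$, and write $v=\overline g+w$ with $w\in W^{s,p}_0(\Omega)$. The functional $J(w)=\frac1p\cE^{s,p}_\Omega(\overline g+w,\overline g+w)-\langle f,w\rangle_s$ is strictly convex, since $t\mapsto|t|^p$ is strictly convex and a difference that vanishes a.e.\ on $\Omega\times\Omega$ forces $w_1-w_2$ to be constant, hence zero in $W^{s,p}_0$. It is also continuous on $W^{s,p}_0(\Omega)$. Coercivity follows from \eqref{eq:rob-poincare-fried-regio}: $\|w\|_{W^{s,p}}^p\le C\,\cE^{s,p}_\Omega(w,w)\le C2^{p-1}(\cE^{s,p}_\Omega(v,v)+\cE^{s,p}_\Omega(\overline g,\overline g))$. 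Reflexivity then gives a minimizer, which is unique by strict convexity. Differentiating $J$ in the direction of $W^{s,p}_0$ shows that the minimizer satisfies \eqref{eq:var-regio-frac-dirichlet}. Conversely, by convexity every solution of \eqref{eq:var-regio-frac-dirichlet} minimizes $J$, which proves the equivalence with \eqref{eq:min-regio-frac-dirichlet} and uniqueness.

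\textbf{Boundedness.} Test \eqref{eq:var-regio-frac-dirichlet} with $v=u-\overline g$ to get
\[
\cE^{s,p}_\Omega(u,u)=\cE^{s,p}_\Omega(u,\overline g)+\langle f,u-\overline g\rangle_s .
\]
The first term is bounded via Hölder by $\cE(u,u)^{1/p'}\cE(\overline g,\overline g)^{1/p}$. For the second, \eqref{eq:rob-poincare-fried-regio} gives $\|u-\overline g\|_{W^{s,p}}\le C\,\cE(u-\overline g,u-\overline g)^{1/p}\le C(\cE(u,u)^{1/p}+\cE(\overline g,\overline g)^{1/p})$. Young's inequality with $\varepsilon$ then absorbs the $\cE(u,u)$ terms and yields \eqref{eq:energy-bound-D}. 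For \eqref{eq:sol-bd-regio-frac-D}, use $\|u\|_{W^{s,p}}\le\|u-\overline g\|_{W^{s,p}}+\|\overline g\|_{W^{s,p}}$ and the same Poincaré bound. Then take the infimum over liftings $\overline g$, noting that $\cE^{s,p}_\Omega(\overline g,\overline g)\le(\|\overline g\|^*_{W^{s,p}})^p$, so the infimum equals $\|g\|^p_{W^{s-1/p,p}(\partial\Omega)}$ by definition of the trace norm. All constants depend only on $(d,p,\Omega)$ thanks to \eqref{eq:equiv-norm-start}.

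\textbf{Continuity (main obstacle).} Choose near-optimal liftings $\overline g_1$ and $\overline g_2$. Since $u_1-u_2$ is not in $W^{s,p}_0$, test both variational identities with $w=(u_1-\overline g_1)-(u_2-\overline g_2)\in W^{s,p}_0(\Omega)$ and subtract:
\[
\mathcal{M}:=\cE(u_1,u_1-u_2)-\cE(u_2,u_1-u_2)=\langle f_1-f_2,w\rangle+\cE(u_1,\overline g_1-\overline g_2)-\cE(u_2,\overline g_1-\overline g_2).
\]
Here we work with $w$ itself, not with $u_1-u_2$ as a test function. The left side is controlled from below by the standard vector inequalities. For $p\ge2$ one has $(|a|^{p-2}a-|b|^{p-2}b)(a-b)\ge c|a-b|^p$, so $\mathcal M\ge c\,\cE(u_1-u_2,u_1-u_2)$. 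For $1<p<2$ one has $\ge c|a-b|^2(|a|+|b|)^{p-2}$, combined with Hölder to get $\cE(u_1-u_2)\le C\mathcal M^{p/2}(\cE(u_1)+\cE(u_2))^{(2-p)/2}$.

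On the right side:
\begin{itemize}
\item The term $\langle f_1-f_2,w\rangle$ is bounded by $\|f_1-f_2\|\,(\|u_1-u_2\|_{W^{s,p}}+\|\overline g_1-\overline g_2\|_{W^{s,p}})$, using Poincaré on $w$.
\item The two $\cE$ terms are bounded by $(\cE(u_1)^{1/p'}+\cE(u_2)^{1/p'})\|\overline g_1-\overline g_2\|$, with $\cE(u_i)^{1/p'}\lesssim D$ by \eqref{eq:energy-bound-D}. This term produces the factor $D^{1/p}\|g_1-g_2\|^{1/p}$ after taking $p$-th roots.
\end{itemize}

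Finally, recover $\|u_1-u_2\|_{W^{s,p}}$ from $\cE(u_1-u_2)$ plus $\|\overline g_1-\overline g_2\|_{W^{s,p}}$ via Poincaré applied to $w$, and apply Young's inequality to absorb the $\|u_1-u_2\|$ term coming from the $f$-part. This yields the stated forms, including the exponent $1/(p-1)$ for $p\ge2$, and $D^{(2-p)/(p-1)}$ and $R$ for $p<2$.

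The delicate bookkeeping is in the $p<2$ exponents. I expect that to be the main obstacle, but only algebraically, since each constant stays $s$-independent by the robust Poincaré inequality.
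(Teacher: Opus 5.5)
Your route is the one the paper intends. The paper only sketches it: strict convexity plus coercivity from \eqref{eq:rob-poincare-fried-regio} for (i), and an adaptation of \cite[Theorem 8.18]{Fog25} for (ii)--(iii). Your testing and monotonicity bookkeeping does close. Write $\delta_f=\|f_1-f_2\|_{(W^{s,p}_0(\Omega))'}$ and $\delta_g$ for the norm of a lifting of $g_1-g_2$. For $p\ge2$ you get $\cE^{s,p}_\Omega(u_1-u_2,u_1-u_2)\le C(\delta_f^{p'}+\delta_f\delta_g+D\delta_g)$. For $1<p<2$ you even get $D^{\frac{1}{2(p-1)}}\le R$ in front of $\delta_g^{1/2}$. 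Two points need correcting, though.

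First, in (iii) you choose near-optimal liftings $\overline g_1,\overline g_2$ separately and then treat $\|\overline g_1-\overline g_2\|_{W^{s,p}(\Omega)}$ as if it were $\lesssim\|g_1-g_2\|_{W^{s-\frac1p,p}(\partial\Omega)}$. Nothing in your argument gives this. The trace norm is a quotient (infimum) norm, and (near-)minimizing liftings chosen for $g_1$ and $g_2$ independently need not depend Lipschitz-continuously on the datum; for $p\neq2$ even the exact minimal lifting is a nonlinear metric projection. The fix is to lift the difference directly:
\begin{itemize}
\item take $h\in W^{s,p}(\Omega)$ with $\gamma^s_0(h)=g_1-g_2$ and $\|h\|^*_{W^{s,p}(\Omega)}\le\|g_1-g_2\|_{W^{s-\frac1p,p}(\partial\Omega)}+\varepsilon$;
\item test both equations with $w=u_1-u_2-h\in W^{s,p}_0(\Omega)$, or equivalently set $\overline g_1:=\overline g_2+h$;
\item let $\varepsilon\to0$ at the end.
\end{itemize}
The bounds $\cE^{s,p}_\Omega(u_i,u_i)\le C(\|f_i\|^{p'}+\|g_i\|^p)$ come from \eqref{eq:energy-bound-D} with each $u_i$'s own lifting, so nothing else in your argument changes.

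Second, your claim about the range of $s$ is false. You say that on ``the compact range $(\frac1p,s_0)$'' the non-robust Poincar\'e inequality keeps the constants independent of $s$. That interval is not compact, and uniformity genuinely breaks down as $s\downarrow\frac1p$.
\begin{itemize}
\item Take $g=0$ and $f=\cE^{s,p}_\Omega(u,\cdot)$ for an arbitrary $u\in W^{s,p}_0(\Omega)$. Then $u$ is the solution, and \eqref{eq:sol-bd-regio-frac-D} forces $\|u\|_{L^p(\Omega)}\le C'\,\cE^{s,p}_\Omega(u,u)^{1/p}$ with $C'=C'(C,d,p)$.
\item But $W^{1/p,p}_0(\Omega)=W^{1/p,p}(\Omega)$ (Section~\ref{sec:trace-theorem}). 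So there are $\phi\in C^\infty_c(\Omega)$ with $\|\phi\|_{L^p(\Omega)}\ge\frac12|\Omega|^{1/p}$ and $[\phi]_{W^{1/p,p}(\Omega)}$ arbitrarily small.
\item By continuity of $s\mapsto[\phi]_{W^{s,p}(\Omega)}$, the same holds for $s$ slightly above $\frac1p$, so no such $C'$ exists there.
\end{itemize}
Hence the $s$-independence in (ii)--(iii) can only hold for $s\in[s_0,1]$, or at best on $[s_1,1]$ with a constant depending on $s_1>\frac1p$. This overstatement is already in the theorem: its justification via \eqref{eq:rob-poincare-fried-regio} covers only $s\ge s_0$, and only that range is used in the proof of Theorem~\ref{thm:opti-conv-dirch-frac-regio}. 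State the result that way instead of patching the lower range. Part (i), for each fixed $s$ with $sp>1$, is unaffected, since it uses the fixed-$s$ Friedrichs inequality.
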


\begin{theorem}[Weak comparison principle]\label{thm:compa-princ-regio}
Let $\Omega \subset \R^d$ be a bounded Lipschitz domain, $sp>1$ and let $u, v \in W^{s,p}(\Omega)$. Assume that $v \leq u$ on $\partial \Omega$ and $(-\Delta)^s_{p,\Omega}v \leq (-\Delta)^s_{p,\Omega} u$ in $\Omega$ in the weak sense; that is,
\begin{align*}
\cE^{s,p}_\Omega (v, w) \leq \cE^{s,p}_\Omega (u,w) \qquad \text{for all } w \in W^{s,p}_0(\Omega) \text{ with } w \geq 0.
\end{align*}
Then we have $v \leq u$ a.e.\ in $\Omega$.
\end{theorem}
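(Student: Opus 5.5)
The plan is to test the weak inequality with the positive part of the difference, $w=(v-u)^+$, and show that the resulting energy inequality forces $w$ to be constant and hence zero.

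\emph{Admissibility of the test function.} We have $v-u\in W^{s,p}(\Omega)$. Since $w\mapsto w^+$ is $1$-Lipschitz on $\R$, $|w^+(x)-w^+(y)|\le |w(x)-w(y)|$, so $w=(v-u)^+\in W^{s,p}(\Omega)$. For the boundary condition we interpret ``$v\le u$ on $\partial\Omega$'' as $\gamma^s_0(v)\le\gamma^s_0(u)$ $\sigma$-a.e. We then need $\gamma^s_0(w)=(\gamma^s_0(v-u))^+=0$. This is clear for functions in $C^1(\overline{\Omega})$. For general functions it follows from the density used in Theorem \ref{thm:trace-frac-thm}, together with continuity of $z\mapsto z^+$ in $W^{s,p}(\Omega)$; alternatively one can pass through $W^{\eta,p}$ with $\frac1p<\eta<s$ and use Remark \ref{rem:trace-mono}. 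By part $(a)$ of Theorem \ref{thm:trace-frac-thm}, $w\in W^{s,p}_0(\Omega)$ and $w\ge0$. I expect this step, the commutation of the trace with the positive part, to be the main technical point. I would handle it by density, using that truncation is continuous on $W^{s,p}$.

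\emph{Monotonicity.} By hypothesis, $\cE^{s,p}_\Omega(v,w)-\cE^{s,p}_\Omega(u,w)\le 0$. Set $a=v(y)-v(x)$, $b=u(y)-u(x)$ and $\phi(t)=|t|^{p-2}t$. The integrand is then
$$(\phi(a)-\phi(b))\,\big(w(y)-w(x)\big)\,|x-y|^{-d-sp}.$$
The key pointwise claim is
$$(\phi(a)-\phi(b))(w(y)-w(x))\ge 0,$$
with equality only if $w(y)=w(x)$. To check it, write $z=v-u$, so $a-b=z(y)-z(x)$, and split into cases:
\begin{itemize}
\item If $z(x),z(y)>0$, then $w(y)-w(x)=a-b$, and the product is $\ge0$ because $\phi$ is strictly increasing.
\item If $z(x),z(y)\le0$, the factor $w(y)-w(x)$ vanishes.
\item If $z(y)>0\ge z(x)$, then $w(y)-w(x)=z(y)>0$ and $a-b=z(y)-z(x)>0$, so $\phi(a)>\phi(b)$. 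The symmetric case is analogous.
\end{itemize}
Hence the integral is $\ge0$ and also $\le0$, so it is zero. Therefore the integrand vanishes a.e., which gives $w(x)=w(y)$ for a.e. $(x,y)\in\Omega\times\Omega$. For $s=1$ the same computation gives $\int_\Omega\big(\phi(\nabla v)-\phi(\nabla u)\big)\cdot\nabla w=\int_{\{v>u\}}(\ldots)\ge0$ with the vector map $\phi(\xi)=|\xi|^{p-2}\xi$, which forces $\nabla w=0$.

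\emph{Conclusion.} Since $w(x)=w(y)$ a.e. on $\Omega\times\Omega$, $w$ is a.e. equal to a constant $c\ge0$; no connectedness is needed in the regional case $s<1$. Then $[w]_{W^{s,p}(\Omega)}=0$, and the robust Poincar\'e--Friedrichs inequality \eqref{eq:rob-poincare-fried-regio}, or simply $\gamma^s_0(w)=c=0$, yields $w=0$, i.e. $v\le u$ a.e. in $\Omega$. For $s=1$, $\nabla w=0$ with $w\in W^{1,p}_0(\Omega)$ gives $w=0$ by the classical Poincar\'e inequality stated in Theorem \ref{thm:robust-poinca-fried-regio}.
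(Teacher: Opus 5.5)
Your proof is correct. It shares the paper's skeleton (test with $w=(v-u)_+$ and show this forces $w=0$), but the key step is done differently.

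The paper imports the quantitative inequality of \cite[Corollary A.7]{Fog25}, namely $(\psi(b_1-b_2)-\psi(a_1-a_2))\big((b_1-a_1)_+-(b_2-a_2)_+\big)\ge A'_p\,|(b_1-a_1)_+-(b_2-a_2)_+|^p$ for $p\ge 2$. For $1<p<2$ it uses a weighted quadratic version combined with H\"older. This gives the coercivity bound $\cE^{s,p}_\Omega(v,w)-\cE^{s,p}_\Omega(u,w)\gtrsim \cE^{s,p}_\Omega(w,w)$ (or its $p<2$ analogue), and the paper then concludes with a Poincar\'e--Friedrichs inequality.

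You use only the strict monotonicity of $t\mapsto |t|^{p-2}t$ and a sign case analysis. This shows the integrand is nonnegative and vanishes exactly where $w(x)=w(y)$, so $w$ is a.e.\ constant, and the trace kills the constant. Your route is self-contained, treats all $1<p<\infty$ at once, and needs no Poincar\'e inequality. That is a real convenience: the inequalities proved in the paper are robust ones valid only for $s\in(s_0,1)$, and Theorem~\ref{thm:rob-friedrichs-zero} also assumes connectedness. Your argument works verbatim for every $s$ with $sp>1$. The paper's route instead yields a quantitative lower bound on the energy defect, the kind of estimate one reuses for continuous dependence.

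You are also more careful than the paper about $w\in W^{s,p}_0(\Omega)$, which the paper simply asserts. One small precision there: for $z\in C^1(\overline{\Omega})$ the function $z^+$ is only Lipschitz on $\overline{\Omega}$. What you actually use is that the trace of a function in $W^{s,p}(\Omega)\cap C(\overline{\Omega})$ is its restriction, which is standard. The passage to the limit then works either with your continuity of truncation or just with the weak convergence $z_n^+\rightharpoonup z^+$ in $W^{s,p}(\Omega)$ and the weak continuity of $\gamma^s_0$.
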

\begin{proof}
For $t\in \R$ we put $\psi(t)=|t|^{p-2}t$ and $t_{\pm}=\max(\pm t, 0)$ so that $t=t_{+}-t_{-}$. By \cite[Corollary A.7.]{Fog25}, for some constant $A'_p>0$ and for all $a_1, a_2, b_1, b_2 \in \R$ we have the estimates
\begin{align*}
(\psi(b_1-b_2)- \psi(a_1-a_2)) &((b_1-a_1)_{+}-(b_2-a_2)_{+})\geq
\\
&\begin{cases}
A'_p |(b_1-a_1)_{+}-(b_2-a_2)_{+}|^p& \hspace{-1ex} p\geq2,
\\
A'_p |(b_1-a_1)_{+}-(b_2-a_2)_{+}|^2(|b|+|a|)^{p-2} &\hspace{-1ex}  1< p<2.
\end{cases}
\end{align*}
Let $w= (v-u)_+$ so that $w=0$ on $\partial \Omega$ since $v-u\leq0$ on $\partial \Omega$. Hence $w\in W^{s,p}_0(\Omega)$ and $w\geq0$. Taking $b_1= v(x), \, b_2= v(y), \,a_1= u(x),\, a_2=u(y)$ and using H\"older inequality we find that
\begin{align*}
0\geq \cE^{s,p}_\Omega (v, w)-\cE^{s,p}_\Omega (u, w) &\geq A'_p\cE^{s,p}_\Omega  (w,w)\qquad \text{$p\geq2$},
\\
0\geq \big(\cE^{s,p}_\Omega (v,w)-\cE^{s,p}_\Omega (u,w)\big)
\big( \cE^{s,p}_\Omega (v,v)+\cE^{s,p}_\Omega (u,u)\big)^{\frac{2-p}{p}} &\geq c_p \cE^{s,p}_\Omega (w,w)^{\frac{2}{p}}\qquad\text{$1<p<2$}.
\end{align*}
In both cases we get $\cE^{s,p}_\Omega (w,w)=0$. The Poincar\'{e}-Friedrichs inequality (Theorem \ref{thm:rob-friedrichs-zero}) implies  $\|w\|_{L^p(\Omega)}=0$. Hence  $w=(v-u)_+=0$ a.e on $\Omega$ that is  $v\leq u$ a.e. on $\Omega$.
\end{proof}

\begin{theorem}[Weak  and strong maximum principles]
Let $\Omega \subset \R^d$ be an open, bounded Lipschitz domain, and let $u \in W^{s,p}(\Omega)$. Assume that $u \geq 0$ a.e. on $\partial\Omega$ and $(-\Delta)^s_{p,\Omega}u \geq 0$ in $\Omega$ in the weak sense. Then, the following hold:
\begin{itemize}
    \item \textit{Weak Maximum Principle:} The solution satisfies $u \geq 0$ a.e. in $\Omega$.
    \item \textit{Strong Maximum Principle:} If $u$ is additionally continuous on $\Omega$, then either $u \equiv 0$ in $\Omega$ or $u > 0$ in $\Omega$.
\end{itemize}
\end{theorem}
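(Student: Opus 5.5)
The weak maximum principle will follow from the weak comparison principle (Theorem \ref{thm:compa-princ-regio}) applied with $v\equiv 0$. Constants lie in $W^{s,p}(\Omega)$ because $\Omega$ is bounded. Their trace is $0$, so $v=0\le u$ on $\partial\Omega$. Moreover $\cE^{s,p}_\Omega(0,w)=0$ for every $w$. The hypothesis $(-\Delta)^s_{p,\Omega}u\ge 0$ weakly means exactly $\cE^{s,p}_\Omega(u,w)\ge 0=\cE^{s,p}_\Omega(0,w)$ for all $w\in W^{s,p}_0(\Omega)$ with $w\ge0$. Theorem \ref{thm:compa-princ-regio} then gives $0\le u$ a.e. in $\Omega$. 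For $s=1$ the same argument works with $\cE^{1,p}_\Omega$ and the classical Poincar\'e inequality. The only point to check is that $w=(0-u)_+=u_-$ lies in $W^{s,p}_0(\Omega)$ when $\gamma^s_0(u)\ge0$; this is exactly what the comparison proof already uses.

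For the strong maximum principle, let $u$ be continuous with $u\ge0$ (from the first part) and $u\not\equiv0$. I argue by contradiction, assuming $u(x_0)=0$ for some $x_0\in\Omega$. Since $u\not\equiv 0$ and $u$ is continuous, there is a ball $B\Subset\Omega$ on which $u\ge \delta>0$. I will test the weak inequality with a nonnegative bump $w\in C_c^\infty(B_r(x_0))$, $r$ small, $B_r(x_0)\cap B=\emptyset$. Then
\begin{align*}
0\le \cE^{s,p}_\Omega(u,w)=C_{d,p,s}\int_{B_r(x_0)} w(x)\int_\Omega \frac{|u(x)-u(y)|^{p-2}(u(x)-u(y))}{|x-y|^{d+sp}}\d y\d x,
\end{align*}
understood as a principal value. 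This identity is valid by the symmetry of the kernel.

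The main obstacle is making the local part rigorous, since $u$ is only continuous. The contribution of $y\in B$ is strictly negative: near $x_0$ we have $u(x)\le\delta/2$, so $u(x)-u(y)\le-\delta/2$ and the kernel is bounded on $B_r(x_0)\times B$. To control the rest, I would use a minimum-point argument in the spirit of Brasco--Franzina, replacing $u$ by a comparison function. Concretely, I would build a subsolution $\phi=\varepsilon\,\mathbf 1_{B}+\chi$ vanishing near $x_0$, and invoke Theorem \ref{thm:compa-princ-regio} on a small ball $B_r(x_0)$, with the regional form restricted suitably. The expected difficulty is that the comparison principle as stated is on all of $\Omega$ with boundary data, not on subdomains.

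If a pointwise evaluation is available (for instance if $u$ is $C^{1,1}$ near $x_0$, or via viscosity arguments), the proof is immediate. At a minimum point $x_0$ with $u(x_0)=0$, every integrand $|u(x_0)-u(y)|^{p-2}(u(x_0)-u(y))\le0$, and it is strictly negative on $B$. Hence $(-\Delta)^s_{p,\Omega}u(x_0)<0$, which contradicts $(-\Delta)^s_{p,\Omega}u\ge0$. I would present this as the core mechanism and handle the weak setting via the local comparison on $B_r(x_0)$ sketched above. In the case $s=1$, the strong principle is V\'azquez's classical result.
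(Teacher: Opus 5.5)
\textbf{Verdict.} Your weak maximum principle is exactly the paper's argument (Theorem~\ref{thm:compa-princ-regio} with $v\equiv0$). For the strong part, the ``core mechanism'' of your last paragraph is the paper's entire proof. There is a genuine gap at the key step, and the paper's proof has the same one.

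\textbf{The gap.} At a zero $x_0$ of $u$ the paper simply writes $0\le(-\Delta)^s_{p,\Omega}u(x_0)\le -C_{d,p,s}\int_\Omega u(y)^{p-1}|x_0-y|^{-d-sp}\,\d y\le 0$. It never justifies that a merely continuous weak supersolution satisfies the inequality pointwise at $x_0$. You are right to flag this, but your repair is only sketched, so as it stands neither text proves the strong principle in the weak setting. What is missing is the passage from $u(x_0)=0$ at one point to $u=0$ on a neighbourhood of $x_0$. That last step would suffice: if $u=0$ on a ball $B_\rho(x_0)$, testing with $0\le w\in C_c^\infty(B_\rho(x_0))$ gives $\mathcal{E}^{s,p}_\Omega(u,w)=-C_{d,p,s}\int w(x)\int_{\Omega\setminus B_\rho(x_0)}u(y)^{p-1}|x-y|^{-d-sp}\,\d y\,\d x\ge0$. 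Hence $u\equiv0$, without even using connectedness.

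\textbf{Repairing the barrier.} Your barrier idea can supply this step, but the sketch needs three corrections.
\begin{enumerate}
\item The barrier must be \emph{positive} at $x_0$, not vanish near it. Also $\mathbf{1}_B\notin W^{s,p}(\Omega)$ because $sp>1$. Take $\phi=\delta\chi+\varepsilon\eta$, where $\chi\in C^\infty_c(B)$, $0\le\chi\le1$, $\chi=1$ on $\frac12B$, and $\eta$ is a bump with $\eta(x_0)=1$, supported in $B_r(x_0)$, with $\operatorname{dist}(B_r(x_0),B)>0$.
\item Comparison on the subdomain is not the real difficulty. Since $\phi\le u$ a.e.\ on $\Omega\setminus B_r(x_0)$, the function $w=(\phi-u)_+\in W^{s,p}_0(\Omega)$ vanishes off $B_r(x_0)$. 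The proof of Theorem~\ref{thm:compa-princ-regio} then applies verbatim once $\mathcal{E}^{s,p}_\Omega(\phi,w)\le0\le\mathcal{E}^{s,p}_\Omega(u,w)$.
\item The real work is the subsolution property. Write $\psi(t)=|t|^{p-2}t$ and split the integral over $\frac12B$, $B\setminus\frac12B$ and $\Omega\setminus B$. For $x\in B_r(x_0)$ and $\varepsilon\le\delta/2$ you get $(-\Delta)^s_{p,\Omega}\phi(x)\le\varepsilon^{p-1}M-c\,(\delta/2)^{p-1}$. Here $c=C_{d,p,s}\inf_{x\in B_r(x_0)}\int_{\frac12B}|x-y|^{-d-sp}\,\d y>0$, and $M$ is an upper bound on $B_r(x_0)$ for $(-\Delta)^s_{p,\Omega}\eta$ plus the bounded contribution of $B\setminus\frac12B$.
\end{enumerate}

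\textbf{Choice of bump.} Such an $M$ does not exist for a bump with a nondegenerate maximum when $1<p\le\frac{2}{2-s}$, a range that is nonempty together with $sp>1$ once $s>\frac23$. Indeed, then $\int\psi(\eta(x_0)-\eta(y))|x_0-y|^{-d-sp}\,\d y=+\infty$. So $\eta$ must be flat at its top. For example, take $\eta(x)=\exp\big(1-(1-|x-x_0|^{2m}r^{-2m})^{-1}\big)$ on $B_r(x_0)$ with $2m(p-1)>sp$, extended by zero.

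\textbf{Conclusion of the argument.} With this barrier, comparison gives $u\ge\varepsilon\eta$ a.e.\ near $x_0$. Continuity then yields $u(x_0)\ge\varepsilon>0$, a contradiction. Alternatively, a weak Harnack inequality for the regional operator does the job, since its tail term vanishes when $u\ge0$ in $\Omega$.

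\textbf{The case $s=1$.} V\'azquez's theorem is stated for $C^1$ functions. For continuous weak supersolutions the appropriate tool is Trudinger's weak Harnack inequality, and there connectedness of $\Omega$ is genuinely needed.
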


\begin{proof} The fact that $u\geq 0$ a.e. in $\Omega$ follows from Theorem \ref{thm:compa-princ-regio}. Assume $u$ is continuous. If $u(x_0)\leq 0$ for some $x_0\in \Omega$ then  since $\psi(t)= |t|^{p-2}t$ is increasing we have
\begin{align*}
0\leq (-\Delta)^s_{p,\Omega}u(x_0)=\pv C_{d,p,s} \int_{\Omega} \frac{\psi(u(x_0)-u(y))}{|x_0-y|^{d+sp}}\d y\leq -C_{d,p,s} \int_{\Omega} \frac{(u(y))^{p-1}}{|x_0-y|^{d+sp}}\d y\leq 0,
\end{align*}
which is only possible if $u (y)=0$ for all $y\in\Omega $ since $u\geq0$.
\end{proof}

\subsection{Gamma convergence} Let us first establish the $\Gamma$-convergence of the functionals associated with the Dirichlet problems under consideration.
\begin{theorem}
\label{thm:gamma-conv-regio-Jo}
Let $\Omega \subset \R^d$ be a bounded Lipschitz domain and $1 < p < \infty$. Assume $(f_s)_s$ with   $f_s\in  (W^{s,p}_{0}(\Omega))'$ converges asymptotically weakly to $f_1\in (W^{1,p}_0(\Omega))'$ and $ (g_s)_s$ with  $g_s\in  W^{s,p}(\Omega)$ converges asymptotically strongly to $g_1\in W^{1,p}(\Omega)$, that is,
\begin{align*}
\lim_{s\to1^-}	\|g_s-g_1\|_{W^{s,p}(\Omega)}=0.
\end{align*}
We extend the functional $\cJ^{s,p}_0: L^p(\Omega) \to (-\infty, \infty]$ by
\begin{align*}
\cJ^{s,p}_0(v) =
\begin{cases}
\frac{1}{p} \cE^{s,p}_\Omega(v,v) - \langle f_s, v-g_s \rangle_s & \text{if } v \in g_s + W^{s,p}_0(\Omega), \\
\infty & \text{otherwise},
\end{cases}
\end{align*}
Then, for any $0 \leq \eta < 1$, $(\cJ^{s,p}_0)_{s \in (0,1)}$ $\Gamma$-converges to $\mathcal{J}^{1,p}_0$ in the strong topology of $W^{\eta, p}(\Omega)$ as $s \to 1^-$. To be more precise, the following two assertions hold:

\begin{itemize}
\item \textbf{Limsup: 
} For every  $v\in W^{\eta,p}(\Omega)$ there exists a sequence $(v_s)_s$ such that  $v_s\to v$ in $W^{\eta,p}(\Omega)$ as $s \to 1^-$ and
\begin{align*}
\limsup_{s \to 1^-} \cJ^{s,p}_0(v_s) \leq \mathcal{J}^{1,p}_0(v).
\end{align*}
\item \textbf{Liminf:} For every $v \in L^p(\Omega)$ and every sequence $(v_s)_s \subset L^p(\Omega)$ satisfying $v_s \to v$ strongly in $W^{\eta,p}(\Omega)$ as $s \to 1^-$, we have
\begin{align*}
\liminf_{s \to 1^-} \cJ^{s,p}_0(v_s) \geq \mathcal{J}^{1,p}_0(v).
\end{align*}
\end{itemize}
\end{theorem}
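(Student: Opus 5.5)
We prove the two halves of the $\Gamma$-convergence separately. We write $\mathcal{J}^{1,p}_0(v)=\frac1p\cE^{1,p}_\Omega(v,v)-\langle f_1,v-g_1\rangle_1$ if $v\in g_1+W^{1,p}_0(\Omega)$, and $+\infty$ otherwise.

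\emph{Limsup.} We may assume $\mathcal{J}^{1,p}_0(v)<\infty$, i.e.\ $v\in g_1+W^{1,p}_0(\Omega)$; otherwise the constant recovery sequence $v_s=v$ works trivially. We set $w=v-g_1\in W^{1,p}_0(\Omega)$ and take the recovery sequence $v_s:=g_s+w$. Since $W^{1,p}_0(\Omega)\subset W^{s,p}_0(\Omega)$, we have $v_s\in g_s+W^{s,p}_0(\Omega)$. Next, $v_s-v=g_s-g_1$, and by Lemma~\ref{lem:frac-unif-bound-trace-bis}$(i)$, for $s>\eta$,
\[
\|g_s-g_1\|_{W^{\eta,p}(\Omega)}\le C\|g_s-g_1\|_{W^{s,p}(\Omega)}\to0 .
\]
Hence $v_s\to v$ in $W^{\eta,p}(\Omega)$. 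For the energy term we write $\cE^{s,p}_\Omega(v_s,v_s)^{1/p}$ as a seminorm (up to the constant $C_{d,p,s}/2$) and use the triangle inequality:
\[
\big|\cE^{s,p}_\Omega(v_s,v_s)^{1/p}-\cE^{s,p}_\Omega(v,v)^{1/p}\big|\le c\,[g_s-g_1]_{W^{s,p}(\Omega)}\to0 .
\]
Combined with \eqref{eq:xasymp-form}, which gives $\cE^{s,p}_\Omega(v,v)\to\cE^{1,p}_\Omega(v,v)$ for $v\in W^{1,p}(\Omega)$, this yields $\cE^{s,p}_\Omega(v_s,v_s)\to\cE^{1,p}_\Omega(v,v)$. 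The linear term is $\langle f_s,v_s-g_s\rangle_s=\langle f_s,w\rangle_s\to\langle f_1,w\rangle_1$ by $(F_2)$ applied with the constant sequence. So in fact we obtain equality in the limit.

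\emph{Liminf.} Let $v_s\to v$ in $W^{\eta,p}(\Omega)$, hence in $L^p(\Omega)$. We pass to a subsequence realizing the liminf and assume it is finite, so that $v_s\in g_s+W^{s,p}_0(\Omega)$ along it. The key step is an a priori bound on $\cE^{s,p}_\Omega(v_s,v_s)$, obtained by coercivity. Write $w_s=v_s-g_s\in W^{s,p}_0(\Omega)$. By $(F_1)$ and the robust Friedrichs--Poincaré inequality (Theorem~\ref{thm:robust-poinca-fried-regio}), $\|w_s\|_{W^{s,p}(\Omega)}\le C[w_s]_{W^{s,p}(\Omega)}$ uniformly for $s\ge s_0$. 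Using $[w_s]\le[v_s]+[g_s]$ and the boundedness of $[g_s]_{W^{s,p}(\Omega)}$, we get
\[
|\langle f_s,w_s\rangle_s|\le M\big(\cE^{s,p}_\Omega(v_s,v_s)^{1/p}+1\big).
\]
Since $p>1$, finiteness of $\liminf\cJ^{s,p}_0(v_s)$ then bounds $\cE^{s,p}_\Omega(v_s,v_s)$, and hence $\|w_s\|_{W^{s,p}(\Omega)}$ as well.

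Theorem~\ref{thm:asymp-converg-interpo} then applies in two ways. Applied to $v_s\to v$, it gives $v\in W^{1,p}(\Omega)$ and $\cE^{1,p}_\Omega(v,v)\le\liminf\cE^{s,p}_\Omega(v_s,v_s)$ (the $\liminf$ inequality with the normalization $C_{d,p,s}/2$). Applied to $w_s\to w:=v-g_1$ in $L^p(\Omega)$ (recall $g_s\to g_1$ in $L^p$), its trace statement gives $\gamma^1_0(w)=\lim\gamma^{s}_0(w_s)=0$ in $L^p(\partial\Omega)$, so $w\in W^{1,p}_0(\Omega)$, i.e.\ $v\in g_1+W^{1,p}_0(\Omega)$. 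Finally, the weak--strong property $(F_2)$ applied to $w_s\to w$ gives $\langle f_s,w_s\rangle_s\to\langle f_1,w\rangle_1$, and the liminf inequality follows.

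The main obstacle is this liminf step: we need the uniform coercive bound to get compactness of the traces, since without it neither the boundary condition nor $(F_2)$ is available in the limit. The bound relies on combining $(F_1)$ with the robust Poincaré constant, which is uniform in $s\in(s_0,1)$.
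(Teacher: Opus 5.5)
Your proposal is correct and follows the paper's proof essentially step for step: the limsup uses the same recovery sequence $v_s=g_s+(v-g_1)$, the triangle inequality for $\cE^{s,p}_\Omega(\cdot,\cdot)^{1/p}$ and \eqref{eq:xasymp-form}, and the liminf uses Theorem~\ref{thm:asymp-converg-interpo} for compactness, lower semicontinuity and the trace of $v-g_1$, followed by $(F_2)$. The one real difference is that you derive the uniform bound on $\cE^{s,p}_\Omega(v_s,v_s)$ from finiteness of $\liminf_{s\to1^-}\cJ^{s,p}_0(v_s)$ by a coercivity argument, combining $(F_1)$ with the robust Friedrichs--Poincar\'e inequality (Theorem~\ref{thm:robust-poinca-fried-regio}); the paper only asserts this implication, so your argument supplies a justification the paper omits.
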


\begin{proof}
Let $v\in W^{\eta, p}(\Omega)$.  If  $v\in L^p(\Omega)\setminus (g_1+W^{1,p}_0(\Omega))$ then $\cJ^{1,p}_0(v)=\infty$ and the limsup holds.  In this case, it is sufficient to consider  the constant sequence $v_s=v$.
Now assume $v\in g_1+ W^{1,p}_0(\Omega)\subset g_1+W^{s,p}_0(\Omega)$ and put $v_s=g_s+v -g_1 $
then $v_s\in g_s+ W^{s,p}_0(\Omega)$ and  $v_s\to v$ in $W^{\eta,p}(\Omega)$ since by lemma \ref{lem:frac-unif-bound-trace-bis} implies
\begin{align*}
\|v_s-v\|_{W^{\eta,p}(\Omega)}
\leq C\|g_s-g_1\|_{W^{s,p}(\Omega)}
\xrightarrow{s\to1^-}0.
\end{align*}
In particular, we find that $\langle f_s, v_s -g_s\rangle_s
= \langle f_s, v -g_1\rangle_s\xrightarrow{s\to1^-} \langle f, v-g_1\rangle_1 $ by assumption on $(f_s)_s$
On the other hand, we find that
\begin{align*}
\big|\cE^{s,p}_\Omega(v_s,v_s)^{\frac{1}{p}}-\cE^{1,p}_\Omega(v,v)^{\frac{1}{p}}\big|
&\leq \cE^{s,p}_\Omega(v_s-v,v_s-v)^{\frac{1}{p}}+ \big|\cE^{s,p}_\Omega(v,v)^{\frac{1}{p}}-\cE^{1,p}_\Omega(v,v)^{\frac{1}{p}}\big|\\
&= \cE^{s,p}_\Omega(g_s-g_1,g_s-g_1)^{\frac{1}{p}}+ \big|\cE^{s,p}_\Omega(v,v)^{\frac{1}{p}}-\cE^{1,p}_\Omega(v,v)^{\frac{1}{p}}\big|\\
&\leq  \|g_s-g_1\|_{W^{s,p}(\Omega)}+ \big|\cE^{s,p}_\Omega(v,v)^{\frac{1}{p}}-\cE^{1,p}_\Omega(v,v)^{\frac{1}{p}}\big|.
\end{align*}
Given that $\Omega$ is Lipschitz, we have  $ \cE^{s,p}_\Omega(v,v)\xrightarrow{s\to1^-}\cE^{1,p}_\Omega(v,v)$, see for instance \cite{BBM01,Fog23}. From the fact that  $\|g_s-g_1\|_{W^{s,p}(\Omega)}\xrightarrow{s\to1^-}0$, it  follows that the recovery sequence $(v_s)_s$ satisfies
$\cE^{s,p}_\Omega(v_s,v_s)
\to  \cE^{1,p}_\Omega(v,v)$ as $s\to 1^-$. Therefore $v_s-g_s= v-g_1\in W^{1,p}_0(\Omega)\subset W^{s,p}_0(\Omega)$ and
\begin{align*}
\lim_{s\to1^-}\cJ^{s,p}_0(v_s)
&= \lim_{s\to1^-}\Big(\frac{1}{p}  \cE^{s,p}_\Omega(v_s, v_s)-\langle f_{s}, u_{s}-g_{s}\rangle_{s}\Big)\\
&=\frac{1}{p}\cE^{1,p}(v,v) -\langle f_{1}, u_{1}-g_{1}\rangle_1= \cJ^{1,p}_0(v).
\end{align*}
Let us now shows the liminf condition. Assume $v_s\to v$ in $W^{\eta,p}(\Omega)$ as $s\to 1^-$. The case $\liminf\limits_{s\to1^-} \cJ^{s,p}_0(v_s)= \infty$ is trivial. Assume $\liminf\limits_{s\to1^-}  \cJ^{s,p}_0(v_s)<\infty$. This implies that  $\liminf\limits_{s\to1^-}\cE^{s,p}_\Omega(v_s,v_s)<\infty.$
Up to a subsequence, we can assume that $\gamma^s_0(v_s)=\gamma^s_0(g_s)$ and
\begin{align*}
\sup_{s\in(0,1)}
\Big(\int_\Omega |v_s(x)|^p\d x+ \cE^{s,p}_\Omega(v_s,v_s)\Big)<\infty.
\end{align*}
In particular we have $v_s-g_s\to v_1-g_1$ and $v_s\to v_1$ in $L^p(\Omega)$ as $s\to 1^-$. According to Theorem \ref{thm:asymp-converg-interpo} we find that $v\in W^{1,p}(\Omega)$ and satisfies
\begin{align*}
&\|\gamma^1_0(v-g_1)\|_{L^p(\partial\Omega) }= \lim_{s\to1^-}\|\gamma^s_0(v_s-g_s)\|_{L^p(\partial\Omega) }=0,\\
&\liminf_{s\to1^-}\cE^{s,p}_\Omega(v_s,v_s)\geq
\liminf_{s_n\to1^-}\cE^{s_n,p}_\Omega(v_{s_n},v_{s_n})\geq \cE^{1,p}_\Omega(v,v).
\end{align*}
Hence, $\gamma^1_0(v)= \gamma^1_0(g_1)$, that is, $v-g_1\in W^{1,p}_0(\Omega)$.
Moreover, since $v_s-g_s \to v-g_1 $ in $L^p(\Omega)$ and $(f_s)_s$ asymptotically  weakly converges to $f_1$ as $s\to 1^-$   we have
$\langle f_s,v_s-g_s \rangle_s  \to  \langle f_1, v-g_1\rangle_1$. Thus  one gets the liminf condition
\begin{align*}
\liminf_{s\to1^-} \cJ^{s,p}_0(v_s)\geq \cJ^{1,p}_0(v).
\end{align*}
\end{proof}

\subsection{Proof of Theorem \ref{thm:opti-conv-dirch-frac-regio} and Theorem \ref{thm:optimal-conv-eigpair}}
\noindent We borrow the next result from \cite{Fog26}.  

\begin{theorem}[Linearizing the convergence]\label{thm:equiv-conv-in-form}
Let $(u_s)_s$ and $u$ be suitable functions such that the sequences
$(\cE^{s,p}_\Omega(u_s,u_s))_s$ and $(\cE^{s,p}_\Omega(u,u))_s$ are bounded, that is,
\begin{align*}
M := \sup_{s\in(0,1)} \big( \cE^{s,p}_\Omega(u_s,u_s) + \cE^{s,p}_\Omega(u,u) \big) < \infty .
\end{align*}
Then there exists a constant $C>0$, depending only on $M$ and $p$, such that
\begin{align*}
C^{-1}\cE^{s,p}_\Omega(u_s-u,u_s-u)
\leq
\big( \cE^{s,p}_\Omega(u_s, u_s-u) - \cE^{s,p}_\Omega(u, u_s-u) \big)
\leq
C\\cE^{s,p}_\Omega(u_s-u,u_s-u).
\end{align*}
\end{theorem}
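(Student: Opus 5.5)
Set $w=u_s-u$, $\psi(t)=|t|^{p-2}t$, and for $x,y\in\Omega$ write $a=u_s(y)-u_s(x)$, $b=u(y)-u(x)$, so that $a-b=w(y)-w(x)$. By definition of $\cE^{s,p}_\Omega$ (and in the same way for $s=1$ with gradients in place of differences),
\begin{align*}
\cE^{s,p}_\Omega(u_s,w)-\cE^{s,p}_\Omega(u,w)=\frac{C_{d,p,s}}{2}\iint_{\Omega\times\Omega}\frac{(\psi(a)-\psi(b))(a-b)}{|x-y|^{d+sp}}\d y\d x .
\end{align*}
The plan is to compare this integrand pointwise with $|a-b|^p$, using the elementary monotonicity and continuity bounds for $\psi$. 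The form $\cE^{s,p}_\Omega(\cdot,\cdot)$ has exactly this structure for every $s$, so any constants obtained are automatically independent of $s$. The two regimes $p\ge2$ and $1<p<2$ are treated separately.

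\textbf{Pointwise inequalities.} I will use the standard estimates, as in \cite[Appendix A]{Fog25}:
\begin{align*}
(\psi(a)-\psi(b))(a-b)&\ge c_p\,|a-b|^2(|a|+|b|)^{p-2},\\
|\psi(a)-\psi(b)|&\le C_p\,|a-b|\,(|a|+|b|)^{p-2}\qquad (p\ge2),
\end{align*}
together with $(\psi(a)-\psi(b))(a-b)\ge c_p|a-b|^p$ for $p\ge2$ and $|\psi(a)-\psi(b)|\le C_p|a-b|^{p-1}$ for $1<p<2$.

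\textbf{Case $p\ge2$.} The lower bound $C^{-1}\cE^{s,p}_\Omega(w,w)\le \cE^{s,p}_\Omega(u_s,w)-\cE^{s,p}_\Omega(u,w)$ follows by integrating the third inequality. For the upper bound, integrate the second inequality and apply H\"older with exponents $\tfrac p2$ and $\tfrac p{p-2}$ against the measure $\frac{C_{d,p,s}}{2}|x-y|^{-d-sp}\d y\d x$. Since $(|a|+|b|)^p\le 2^{p-1}(|a|^p+|b|^p)$, this gives
\begin{align*}
\cE^{s,p}_\Omega(u_s,w)-\cE^{s,p}_\Omega(u,w)\le C\,M^{\frac{p-2}{p}}\,\cE^{s,p}_\Omega(w,w)^{\frac 2p}.
\end{align*}

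\textbf{Case $1<p<2$.} The upper bound is immediate: integrating $|\psi(a)-\psi(b)||a-b|\le C_p|a-b|^p$ gives $\le C\,\cE^{s,p}_\Omega(w,w)$. For the lower bound, apply the reverse H\"older trick to
\begin{align*}
|a-b|^p=\big(|a-b|^2(|a|+|b|)^{p-2}\big)^{\frac p2}(|a|+|b|)^{\frac{p(2-p)}{2}},
\end{align*}
with exponents $\tfrac2p$ and $\tfrac2{2-p}$. This yields
\begin{align*}
\cE^{s,p}_\Omega(w,w)\le C\,M^{\frac{2-p}{2}}\big(\cE^{s,p}_\Omega(u_s,w)-\cE^{s,p}_\Omega(u,w)\big)^{\frac p2}.
\end{align*}

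\textbf{Main obstacle.} The remaining step is to convert the powers $\cE^{\,2/p}$ (for $p>2$) and $(\cdot)^{p/2}$ (for $p<2$) into the linear bounds written in the statement, with a constant depending only on $M$ and $p$. Here I would use that all quantities are a priori bounded by $M$: $\cE^{s,p}_\Omega(w,w)\le 2^{p-1}\cdot 2M$, and the difference is bounded by $2M$ via H\"older. This lets me pass between powers \emph{in one direction only}. For $p>2$, $t^{2/p}\ge (2^pM)^{\frac2p-1}t$ on $[0,2^pM]$, which is the wrong direction for the upper bound. So a genuinely linear upper bound would additionally need $\cE^{s,p}_\Omega(w,w)$ to be bounded below, and symmetrically for $p<2$.

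I expect this to be the delicate point. I would first try to exploit any extra structure available in the intended application (e.g.\ $u_s$ solving $(V_s)$, so that the difference equals $\langle f_s-f_1,w\rangle$-type terms). Failing that, I would state the result in the power-corrected form above, whose exponents are sharp (test with $p\ne2$ and $w=\varepsilon\phi$, $\varepsilon\to0$). For $p=2$ both sides coincide and the statement holds with $C=1$.
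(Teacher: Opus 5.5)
Your computations are correct, and the ``main obstacle'' you isolate is a defect of the statement, not a gap in your argument. For $p\neq 2$ the two-sided \emph{linear} estimate with $C=C(M,p)$ is false. No additional structure can rescue the statement as formulated, and none is needed for its application.

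Write $D_s:=\cE^{s,p}_\Omega(u_s,w)-\cE^{s,p}_\Omega(u,w)$. Your sharpness test is a genuine counterexample, provided $u$ is non-degenerate; for $u=0$ one has $D_s=\cE^{s,p}_\Omega(w,w)$ identically. Take $u(x)=x_1$, $\phi\in C^\infty_c(\Omega)\setminus\{0\}$, and $u_s=u+\varepsilon\phi$ for all $s$, with $0<\varepsilon\le 1$. Then:
\begin{itemize}
\item $M$ is bounded independently of $\varepsilon$;
\item $\cE^{s,p}_\Omega(w,w)=\varepsilon^p\,\cE^{s,p}_\Omega(\phi,\phi)$;
\item the pointwise equivalence $(\psi(a)-\psi(b))(a-b)\asymp_p|a-b|^2(|a|+|b|)^{p-2}$, valid for every $p>1$, together with $|a|+|b|\ge|b|$, gives for each fixed $s$
\[
D_s\;
\begin{cases}
\ge c_p\,\varepsilon^2 I_s, & p>2,\\
\le C_p\,\varepsilon^2 I_s, & 1<p<2,
\end{cases}
\qquad
I_s:=\frac{C_{d,p,s}}{2}\iint_{\Omega\times\Omega}\frac{|\phi(y)-\phi(x)|^2\,|y_1-x_1|^{p-2}}{|x-y|^{d+sp}}\d y\d x .
\]
\end{itemize}
Here $0<I_s<\infty$; for $p<2$, finiteness follows from $|\phi(y)-\phi(x)|\lesssim|x-y|$ and $p-2>-1$.

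Letting $\varepsilon\to0$ gives $D_s/\cE^{s,p}_\Omega(w,w)\to\infty$ if $p>2$, and $\cE^{s,p}_\Omega(w,w)/D_s\to\infty$ if $1<p<2$. So the upper bound fails for $p>2$ and the lower bound fails for $1<p<2$. These are precisely the two directions your H\"older argument could not close.

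The paper gives no proof of this statement (it is quoted from \cite{Fog26}), so there is no route to compare with; what matters is its use. Its only use in the main argument, at the end of the proof of Theorem~\ref{thm:opti-conv-dirch-frac-regio}, is to deduce $\cE^{s,p}_\Omega(u_s-u_1,u_s-u_1)\to0$ from $D_s\to0$. Your bounds give this for all $1<p<\infty$:
\begin{itemize}
\item $c_p\,\cE^{s,p}_\Omega(w,w)\le D_s$ for $p\ge2$;
\item $\cE^{s,p}_\Omega(w,w)\le C_p M^{\frac{2-p}{2}}D_s^{p/2}$ for $1<p<2$.
\end{itemize}
Together with your upper bounds, this yields the correct form of the ``linearization'': $D_s\to0$ if and only if $\cE^{s,p}_\Omega(w,w)\to0$, with explicit moduli depending only on $M$ and $p$. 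So drop the search for extra structure from the equation and state the power-corrected inequalities as the theorem.
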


The following result can  be found in \cite{Fog26}; see also  \cite[Lemma 2.8]{FeSa20}.
\begin{theorem}
\label{thm:asymp-conv-reg-frac-form}
Let $\Omega\subset \R^d$ be open,  $u,v\in W^{1,p}(\Omega)$ and the sequence $ (v_s)_s$ with  $v_s \in W^{s,p}(\Omega)$ such that  $v_s \to v$ in $L^p_{\loc}(\Omega)$ as $s\to1^-$ and
\begin{align*}
\sup_{s\in (1/p, 1)} \big(\|v_s\|^p_{L^p(\Omega)}+ \cE^{s,p}_\Omega(v_s,v_s)\big)<\infty.
\end{align*}
If $u\in W^{1,p}_0(\Omega)$ or $\Omega$ is an $W^{1,p}$-extension domain then there holds that
\begin{align*}
\lim_{s\to1^-}\cE^{s,p}_\Omega(u,  v_s)
= \cE^{1,p}_\Omega(u,v).
\end{align*}
\end{theorem}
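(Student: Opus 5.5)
The plan is a density reduction followed by a weak--strong pairing for smooth $u$. Throughout, $\psi(t)=|t|^{p-2}t$, and all estimates are uniform in $s$ near $1$.

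\textbf{Step 1: a priori bounds.} Hölder's inequality with respect to the measure $\frac{C_{d,p,s}}{2}|x-y|^{-d-sp}\d y\d x$ on $\Omega\times\Omega$ gives
\begin{align*}
|\cE^{s,p}_\Omega(w,z)|\leq \cE^{s,p}_\Omega(w,w)^{1/p'}\,\cE^{s,p}_\Omega(z,z)^{1/p}.
\end{align*}
Moreover, $\cE^{s,p}_\Omega(w,w)\leq C\|w\|^p_{W^{1,p}(\Omega)}$ with $C$ independent of $s$. If $\Omega$ is a $W^{1,p}$-extension domain, this is \eqref{eq:xxuniform-bound}. If instead $w\in W^{1,p}_0(\Omega)$, I extend $w$ by zero and use the corresponding bound on $\R^d$.

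\textbf{Step 2: stability of the first slot.} I need $|\cE^{s,p}_\Omega(u,z)-\cE^{s,p}_\Omega(\tilde u,z)|$ to be small whenever $\|u-\tilde u\|_{W^{1,p}}$ is small, uniformly in $s$. For $1<p\leq2$ I use $|\psi(a)-\psi(b)|\leq C|a-b|^{p-1}$. For $p>2$ I use $|\psi(a)-\psi(b)|\leq C|a-b|(|a|+|b|)^{p-2}$ together with a three-factor Hölder inequality. In both cases the difference is bounded by
\begin{align*}
C\,\cE^{s,p}_\Omega(u-\tilde u,u-\tilde u)^{\alpha}\,M^{\beta},\qquad \alpha>0,
\end{align*}
where $M$ bounds $\cE^{s,p}_\Omega(z,z)$, $\cE^{s,p}_\Omega(u,u)$ and $\cE^{s,p}_\Omega(\tilde u,\tilde u)$. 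By Step 1 this is small uniformly in $s$ for $z=v_s$ and for $z=v$; here $\cE^{s,p}_\Omega(v,v)$ is bounded because $v\in W^{1,p}(\Omega)$ under the alternative that the theorem allows. The same estimate at $s=1$ controls $\cE^{1,p}_\Omega(u,v)-\cE^{1,p}_\Omega(\tilde u,v)$. Hence it suffices to prove the claim for $u$ in a dense class: $u\in C^\infty_c(\Omega)$ in the $W^{1,p}_0$ case, and $u\in C^\infty(\overline\Omega)$ in the extension case.

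\textbf{Step 3: smooth $u$, compact support.} Let $\operatorname{supp}u\subset K\Subset\Omega$, and write $\cE^{s,p}_\Omega(u,v_s)=\int_\Omega v_s\,(-\Delta)^s_{p,\Omega}u\d x$. Choose $K'\Subset\Omega$ with $K\Subset K'$.
\begin{itemize}
\item For $x\notin K'$ we have $(-\Delta)^s_{p,\Omega}u(x)=-C_{d,p,s}\int_K\psi(u(y))|x-y|^{-d-sp}\d y$. This is $O(1-s)$ uniformly, so its pairing with $v_s$ is at most $C(1-s)\sup_s\|v_s\|_{L^p}\to0$.
\item On $K'$, I claim $(-\Delta)^s_{p,\Omega}u\to-\Delta_pu$ in $L^{p'}(K')$. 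For $p\geq2$ a second-order Taylor expansion gives uniform pointwise convergence, as in \cite[Section 9]{Fog25}. Since $v_s\to v$ in $L^p(K')$, weak--strong pairing gives $\int_{K'}v_s(-\Delta)^s_{p,\Omega}u\to\int_{K'}v(-\Delta_pu)$, and this equals $\cE^{1,p}_\Omega(u,v)$ after integrating by parts.
\end{itemize}

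\textbf{Step 4: the extension case.} Here $u$ is smooth up to $\partial\Omega$, and the regional operator does not converge to $-\Delta_p u$ near $\partial\Omega$; there is a boundary term. I would avoid the operator there. I split $\Omega\times\Omega$ by a cutoff into an interior part, handled as in Step 3, and a boundary strip $\{\operatorname{dist}(x,\partial\Omega)<\rho\}$. On the strip, Hölder's inequality bounds the contribution by the localized energy of $u$ on the $\rho$-strip, raised to $1/p'$, times $M^{1/p}$. By the Bourgain--Brezis--Mironescu localization, that energy tends to $\int_{\text{strip}}|\nabla u|^p$, which is small in $\rho$. The limit side is handled the same way. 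Letting $s\to1^-$ and then $\rho\to0$ would conclude.

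\textbf{Main obstacle.} The main difficulty is the case $1<p<2$ in Step 3. There $\psi$ is singular at critical points of $u$, and $(-\Delta)^s_{p,\Omega}u$ need not converge pointwise to $-\Delta_pu$ where $\nabla u=0$. I would first try to show only $L^{p'}(K')$ convergence, using a uniform bound of the form $|(-\Delta)^s_{p,\Omega}u|\leq C$ and dominated convergence away from a null set. If that fails, I would bypass the operator entirely: prove the weak--strong convergence directly from the pair representation $\iint\psi(\cdots)(v_s(y)-v_s(x))$ in BBM/Ponce fashion, following \cite[Lemma 2.8]{FeSa20}.
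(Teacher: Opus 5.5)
\textbf{The gap is the range $1<p<2$.} Your Steps 1--2 are sound: the H\"older bounds, the uniform-in-$s$ stability of the first slot, and the reduction to a dense class. Step 3 is fine for $p\ge 2$. For $1<p<2$, however, the operator route cannot be rescued by a uniform bound plus dominated convergence, because the objects it needs do not exist.

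Take $u\in C^\infty_c(\Omega)$, which is your dense class, with $u(x)=x_1^2$ on a ball $B\Subset\Omega$. At $x\in B$ with $x_1=0$ we have $\psi(u(x)-u(x+h))=-|h_1|^{2(p-1)}$ for small $h$. This has a fixed sign, so the principal value produces no cancellation, and
\begin{align*}
\int_{B_r}|h_1|^{2(p-1)}|h|^{-d-sp}\d h=\infty\qquad\text{whenever } s\ge 2-\tfrac{2}{p},
\end{align*}
which covers every $s$ close to $1$. Consequences:
\begin{itemize}
\item $(-\Delta)^s_{p,\Omega}u$ is infinite on a piece of hyperplane.
\item No bound $|(-\Delta)^s_{p,\Omega}u|\le C$ holds.
\item The representation $\cE^{s,p}_\Omega(u,v_s)=\int_\Omega v_s\,(-\Delta)^s_{p,\Omega}u\d x$ cannot be justified.
\end{itemize}
The target is also out of reach. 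There $-\Delta_p u=-2^{p-1}(p-1)|x_1|^{p-2}$, which is not in $L^{p'}_{\loc}$ when $p\le\frac{1+\sqrt5}{2}$. So convergence to $-\Delta_pu$ in $L^{p'}(K')$ is impossible.

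The paper gives no proof of this statement; it quotes it from \cite{Fog26} and \cite[Lemma 2.8]{FeSa20}. That is exactly your fallback, but you only name it, so $1<p<2$ is left unproved.

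\textbf{The missing argument.} What is needed uses only the H\"older continuity $|\psi(a)-\psi(b)|\le C|a-b|^{p-1}$, never the differentiability of $\psi$. Set $D^w_tf(x)=\frac{f(x+tw)-f(x)}{t}$ and use polar coordinates $y=x+tw$. Insert a weight $\chi(x)$ with $\chi\in C^\infty_c(\Omega)$, taking $\chi=1$ near $\operatorname{supp}u$ in the $W^{1,p}_0$ case. The $(1-\chi(x))$-piece is controlled by H\"older against $\cE^{s,p}_\Omega(v_s,v_s)^{1/p}$, and there is a far-range term of order $1-s$. Up to these, the form equals
\begin{align*}
\frac{C_{d,p,s}}{2}\int_{\mathbb{S}^{d-1}}\int_0^r t^{p(1-s)-1}\int_\Omega \chi(x)\,\psi(D^w_tu(x))\,D^w_tv_s(x)\d x\d t\d\sigma_{d-1}(w).
\end{align*}
The $t$-weight has mass tending to $\frac{1}{K_{d,p}|\mathbb{S}^{d-1}|}$ and concentrates at $t=0$. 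Then proceed as follows.
\begin{enumerate}
\item Replace $\psi(D^w_tu)$ by $\psi(\partial_wu)$, and then $\chi\,\psi(\partial_wu)$ by a mollification $\Phi_w\in C^\infty_c(\Omega)$. Each error is a small $L^{p'}$ factor times $\cE^{s,p}_\Omega(v_s,v_s)^{1/p}\le M^{1/p}$.
\item Move the difference quotient onto the smooth function: $\int_\Omega \Phi_w D^w_tv_s\d x=\int_\Omega v_s(x)\frac{\Phi_w(x-tw)-\Phi_w(x)}{t}\d x$. This tends to $\int_\Omega\Phi_w\,\partial_wv\d x$, using only $v_s\to v$ in $L^p_{\loc}(\Omega)$ and $v\in W^{1,p}(\Omega)$.
\item Remove the mollification and apply $\int_{\mathbb{S}^{d-1}}\psi(e\cdot w)(\xi\cdot w)\d\sigma_{d-1}(w)=K_{d,p}|\mathbb{S}^{d-1}|\,|e|^{p-2}e\cdot\xi$.
\item In the extension case, let the strip width tend to $0$.
\end{enumerate}
This treats all $1<p<\infty$ at once.

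\textbf{Step 4 needs more precision even for $p\ge2$.} The cutoff must weight the integrand. It cannot act on $u$, because the form is nonlinear in $u$. It cannot act on $v_s$ either: the commutator makes $\cE^{s,p}_\Omega((1-\chi)v_s,(1-\chi)v_s)$ of order $\rho^{-sp}$, and the discarded piece keeps an order-one term $-\int_\Omega v\,|\nabla u|^{p-2}\nabla u\cdot\nabla\chi\d x$. With the integrand weighted, your strip estimate is correct. But the interior piece is not handled ``as in Step 3'': after symmetrization its kernel carries $\chi(x)+\chi(y)$, and its limit is $-\div(\chi|\nabla u|^{p-2}\nabla u)$. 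So the extra commutator term $-|\nabla u|^{p-2}\nabla u\cdot\nabla\chi$ must be computed.
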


\begin{proof}[\textbf{Proof of Theorem \ref{thm:opti-conv-dirch-frac-regio}}]
Let $\overline{g}_1 \in W^{1,p}(\Omega) \subset W^{s,p}(\Omega)$ such that $\gamma^1_0(\overline{g}_1) = g_1$. Note that $g_s - g_1 \in W^{s-\frac{1}{p},p}(\partial \Omega)$ since $\gamma^s_0\vert_{W^{1,p}(\Omega)} = \gamma^1_0$. Let $h_s \in W^{s,p}(\Omega)$ such that
\begin{align*}
\gamma^s_0 (h_s) = g_s - g_1 \text{ and}\qquad \|h_s\|_{W^{s,p}(\Omega)} \leq 1-s + \|g_s - g_1\|_{W^{s-\frac{1}{p},p}(\partial \Omega)}.
\end{align*}
Let us define $\overline{g}_s = h_s + \overline{g}_1 \in W^{s,p}(\Omega)$ so that $\gamma^s_0(\overline{g}_s) = g_s$ and
\begin{align*}
\|\overline{g}_s - \overline{g}_1\|_{W^{s,p}(\Omega)} \leq 1-s + \|g_s - g_1\|_{W^{s-\frac{1}{p},p}(\partial \Omega)} \xrightarrow{s\to1^-} 0.
\end{align*}
Hence, there is no loss of generality in assuming that  $g_s \in W^{s,p}(\Omega)$ such that
\begin{align*}
\|\gamma^s_0(g_s) - \gamma^1_0(g_1)\|_{L^p(\partial\Omega)} + \|g_s - g_1 \|_{W^{s,p}(\Omega)} \xrightarrow{s\to1^-} 0.
\end{align*}

Thus, 
 we can assume $\|g_s-g \|_{W^{s,p}(\Omega)}<1$ for $s\in (s_*, 1)$ so that Lemma \ref{lem:frac-unif-bound-trace-bis} implies
\begin{align*}
\sup_{s_*<s<1}\|g_s \|_{W^{s,p}(\Omega)}\leq 1+ \sup_{s_*<s<1}
\|g \|_{W^{s,p}(\Omega)}\leq 1+  C \|g\|^p_{W^{1,p}(\Omega)}<\infty.
\end{align*}
Since $(\|f_s \|_{(W^{s,p}_0(\Omega))'})_s$ is bounded by assumption,  we obtain the uniform estimate
\begin{align}\label{eq:uniform-bdd-f-g-D-regio}
M:= 	\sup_{s_*<s<1}\big(\|f_s \|_{(W^{s,p}_0(\Omega))'}+\|g_s \|_{W^{s,p}(\Omega)}\big)<\infty.
\end{align}
For $s\in (s_*,1)$,  the estimate \eqref{eq:sol-bd-regio-frac-D} implies  the uniform estimate
\begin{align}
\label{eq:unifo-bddness-regio-frac}
\begin{split}
\|u_s\|_{W^{s,p}(\Omega)}
&\leq C \big(\|f\|^{p'}_{(W^{s,p}_0(\Omega))'}+\|\gamma^s_0(g)\|^p_{ W^{s-\frac{1}{p},p}(\partial \Omega)}\big)^{1/p}\\
&\leq C\big(\|f_s \|^{\frac{1}{p-1}}_{(W^{s,p}_0(\Omega))'}+\|g_s \|_{W^{s,p}(\Omega)}\big)
\leq C
\end{split}
\end{align}
for a generic  constant  $C>0$ independent of $s$.
Accordingly, by the asymptotic compactness
Theorem \ref{thm:asymp-compact-frac}, there is $u\in W^{1,p}(\Omega)$ and subsequence $s_n\to 1^-$ such that
\begin{align*}
&\|u_{s_n} -u\|_{W^{\eta,p}(\Omega)}\xrightarrow{s_n\to 1^-}0
\qquad\text{for all $0\leq \eta<1$}, \\
&\|\gamma^{s_n}_0(u_{s_n}) -\gamma^1_0(u)\|_{W^{\tau-\frac{1}{p},p}(\partial\Omega)}\xrightarrow{s_n\to 1^-}0\qquad\text{for all $\frac1{p} <\tau<1$},\\
&\cE^{1,p}_\Omega(u,u)\leq \liminf_{n\to\infty}\cE^{s_n,p}_\Omega(u_{s_n}, u_{s_n}).
\end{align*}
Note that $\gamma^{s_n}_0 (u_{s_n})= \gamma^{s_n}_0(g_{s_n})$ and by Remark \ref{rem:trace-mono}, for $\frac{1}{p}<\tau<s_n<1$,  we have $\gamma^{\tau}_0 |_{W^{s_n,p}(\Omega)}= \gamma^{s_n}_0$  and $\gamma^{\tau}_0 |_{W^{1,p}(\Omega)}= \gamma^{1}_0$ so that $\gamma^{\tau}_0 (u_{s_n})= \gamma^{\tau}_0(g_{s_n})$. We deduce that
\begin{align*}
\|\gamma^1_0(u)-\gamma^1_0(g_1)\|_{W^{\tau-\frac{1}{p},p}(\partial\Omega)}
&\leq \|\gamma^{\tau}_0(u_{s_n}-u)\|_{W^{\tau-\frac{1}{p},p}(\partial\Omega)}+ \|\gamma^{\tau}_0(g_{s_n}-g_1)\|_{W^{\tau-\frac{1}{p},p}(\partial\Omega)}\\
&\leq \|u_{s_n} -u\|_{W^{\tau,p}(\Omega)}+ \|g_{s_n} -g_1\|_{W^{\tau,p}(\Omega)}\\
&\leq \|u_{s_n}-u\|_{W^{\tau,p}(\Omega)}+ C\|g_{s_n} -g_1\|_{W^{s_n,p}(\Omega)}
\xrightarrow{s_n\to 1^-}0,
\end{align*}
where the last line uses Lemma \ref{lem:frac-unif-bound-trace-bis}. Therefore, we deduce  that $\gamma^1_0(u)= \gamma^1_0(g_1)$, i.e., $u-g_1\in W^{1,p}_0(\Omega)$.
The strong convergence of $(u_{s_n})_n$ to $u$ in $L^p(\Omega)$ and the asymptotic weak convergence of $(f_{s_n} )_n $ yield $\langle f_{s_n}, u_{s_n}-g_{s_n} \rangle_{s_n }\to \langle f ,u-g_1\rangle_1$ and
\begin{align*}
\liminf_{n\to\infty}\cJ_0^{s_n,p}(u_{s_n})= \liminf_{n\to\infty}\Big(\frac{1}{p} \cE_\Omega^{s_n,p}(u_{s_n},u_{s_n} )-\langle f_{s_n}, u_{s_n}-g_{s_n}\rangle_{s_n} \Big) \geq  \cJ^{1,p}_0(u),
\end{align*}
where we recall that each $u_s$  uniquely  satisfies the minimization problem
\begin{align*}
\cJ_0^{s,p}(u_s)
&= \min_{v-g_s\in W^{s,p}(\Omega)} \cJ_0^{s,p}(v),
\\
\cJ_0^{s,p}(v)& = \frac{1}{p} \cE^{s,p}_\Omega(v,v)
-\langle f_s , v-g_s\rangle_s.
\end{align*}
Now, we want show that $u=u_1$.  Let $v\in g_1+W^{1,p}_0(\Omega)$.  According to Theorem \ref{thm:gamma-conv-regio-Jo} there is  a recovery sequence $v_s\in g_s+ W^{s,p}(\Omega)$ such that $v_s\to v$ in $L^p(\Omega)$ and
\begin{align*}
\lim_{s\to1^-}\cJ_0^{s,p}(v_{s})= \cJ^{1,p}_0(v).
\end{align*}
Since  each $u_{s_n}$ minimizes $\cJ^{s_n,p}_0$, i.e., $\cJ_0^{s_n,p}(u_{s_n})\leq
\cJ_0^{s_n,p}(v_{s_n}) $, we deduce that
\begin{align*}
\cJ^{1,p}_0(u)\leq \liminf_{n\to\infty}\cJ_0^{s_n,p}(u_{s_n})\leq
\liminf_{n\to\infty}\cJ_0^{s_n,p}(v_{s_n})= \cJ^{1,p}_0(v).
\end{align*}
We arrive at the conclusion that $\|u_{s_n}-u\|_{W^{\eta,p}(\Omega)}\to 0$ as $s_n\to1^-$ for $0\leq \eta<1$ and
\begin{align*}
\cJ^{1,p}_0(u)&= \min_{v-g_1\in W^{1,p}_0(\Omega)} \cJ^{1,p}_0(v).
\end{align*}
In other words, $u= u_1\in W^{1,p}(\Omega)$ is the unique weak solution to the Dirichlet $-\Delta_p u=f_1$ in $\Omega$ and $\gamma^1_0(u)=g_1$ on $\partial\Omega$. The uniqueness of $u_1$ implies  that $\|u_s-u_1\|_{W^{\eta,p}(\Omega)}\to 0$ as $s\to1^-$ for all $0\leq \eta<1$. In particular, $\|u_s-u_1\|_{L^p(\Omega)}\to 0$ as $s\to1^-$ so that  Theorem \ref{thm:asymp-conv-reg-frac-form} implies the asymptotic convergences
\begin{align*}
\cE^{s,p}_\Omega(u_1,u_s-u_1) \xrightarrow{s\to1^-}0
\quad\text{and}\quad
\cE^{s,p}_\Omega(u_1,u_s)
\xrightarrow{s\to1^-}
\cE^{1,p}_\Omega(u_1,u_1).
\end{align*}
Next,  since $\|g_s-g_1\|_{W^{s,p}(\Omega)}\xrightarrow{s\to1^-}0$, it
appears that $(u_s-g_s)_s$ converges to $u_1-g_1$ in $L^p(\Omega)$ and
$u_1-g_1\in W^{1,p}_0(\Omega)$. By the asymptotic weak convergence of
$(f_s)_s$ we obtain
\begin{align*}
\langle f_s ,u_s-g_s \rangle_s \to \langle f ,u_1-g_1 \rangle_1 \quad
\text{and} \quad \langle f_s ,u_1-g_1 \rangle_s \to \langle f ,u_1-g_1 \rangle_1.
\end{align*}
Since $(u_s)_s$ is asymptotically bounded see \eqref{eq:unifo-bddness-regio-frac},
we find that
\begin{align*}
|\cE^{s,p}_\Omega(u_s,g_s-g_1)| \leq \cE^{s,p}_\Omega(u_s,u_s)^{\frac{1}{p'}}
\cE^{s,p}_\Omega(g_s-g_1, g_s-g_1)^{\frac{1}{p}} \leq C\|g_s-g_1\|_{W^{s,p}(\Omega)}
\to0,
\end{align*}
as $s\to1^-$. Thence, it follows by linearity that
\begin{align*}
\cE^{s,p}_\Omega(u_s,u_s-u_1) &= \cE^{s,p}_\Omega(u_s,g_s-g_1)
+ \langle f_s ,u_s-g_s \rangle_s + \langle f_s ,g_1-u_1 \rangle_s
\xrightarrow{s\to1^-}0.
\end{align*}
As $u_s \in W^{s,p}(\Omega)$ is a weak solution and $u_1 \in W^{1,p}(\Omega)
\subset W^{s,p}(\Omega)$, using the test function $u_s-u_1 \in W^{s,p}(\Omega)$,
we have obtained altogether the convergences
\begin{align*}
\mathcal{E}^{s,p}_\Omega(u_1, u_s-u_1) \xrightarrow{s\to1^-}0 \quad
\text{and} \quad \mathcal{E}^{s,p}_\Omega(u_s, u_s-u_1) \xrightarrow{s\to1^-}0.
\end{align*}
Accordingly, Theorem \ref{thm:equiv-conv-in-form} implies
$\mathcal{E}^{s,p}_\Omega(u_s-u_1, u_s-u_1) \xrightarrow{s\to1^-}0$ and hence
\begin{align*}
\|u_s-u_1\|_{W^{s,p}(\Omega)} \xrightarrow{s\to1^-}0.
\end{align*}
\end{proof}
\begin{remark}

Let assume that  $f_s,f_1\in L^{p'}(\Omega)$ and $g_s,g_1\in W^{s,p}(\Omega)$ and  put $w_s= u_s-g_s$. Note that $w_s,w _1\in W^{s,p}_0(\Omega)$. The robust Poincar\'e inequality implies
\begin{align*}
\| u_s-u_1\|_{L^p(\Omega)}
&\leq \| u_s-g_s-(u_1-g_1)\|_{L^p(\Omega)}+\| g_s-g_1\|_{L^p(\Omega)}\\
&\leq C\cE^{s,p}_\Omega(u_s-g_s-(u_1-g_1),u_s-g_s-(u_1-g_1))^{1/p}+ \| g_s-g_1\|_{L^p(\Omega)}\\
&\leq C\cE^{s,p}_\Omega(u_s-u_1,u_s-u_1)^{1/p}
+ C\| g_s-g_1\|_{W^{s,p}(\Omega)},
\end{align*}
where $C$ is a generic constant independent of $s$.
It  follows that
\begin{align*}
\| u_s-u_1\|_{W^{s,p}(\Omega)}
&\leq C\cE^{s,p}_\Omega(u_s-u_1,u_s-u_1)^{1/p}+ C
\| g_s-g_1\|_{W^{s,p}(\Omega)}\\
&\leq C\Big(\cE^{s,p}_\Omega(u_s,w_s-w_1)-\cE^{s,p}_\Omega(u_1,w_s-w_1)\Big)^{1/p}+
C\| g_s-g_1\|_{W^{s,p}(\Omega)},
\end{align*}
where we used the fact that $\|u_s\|_{W^{s,p}(\Omega)}+ \|u_1\|_{W^{s,p}(\Omega)}\leq C$ and
\begin{align*}
\cE^{s,p}_\Omega(u_s-u_1,u_s-u_1)\leq C
\Big(\cE^{s,p}_\Omega(u_s-u_1,u_s-u_1)\Big).
\end{align*}
The weak formulation $\cE^{s,p}_\Omega(u_s, v)= \int_\Omega f_s v$, $v\in W^{s,p}_0(\Omega)$ implies
\begin{align*}
\cE^{s,p}_\Omega(u_s&,w_s-w_1)-\cE^{s,p}_\Omega(u_1,w_s-w_1)\\
&= \int_\Omega (f_s-f_1) (w_s-w_1)\d x+ \int_\Omega f_1 w_s\d x -\cE^{s,p}_\Omega(u_1,w_s)+ \cE^{s,p}_\Omega(u_1,w_1)-\cE^{1,p}_\Omega(u_1,w_1)\\
&\leq C\Big(\|f_s-f_1\|_{L^{p'}(\Omega)}+ \Big| \int_\Omega f_1 w_s\d x -\cE^{s,p}_\Omega(u_1,w_s)\Big| + \|(-\Delta)^s_{p,\Omega} u_1- (-\Delta)^1_{p,\Omega}u_1\|_{ (W^{1,p}_0(\Omega))'} \Big).
\end{align*}
Therefore we  find the estimate
\begin{align*}
\begin{split}
\| u_s-u_1\|_{W^{s,p}(\Omega)}
&\leq C\Big(\|f_s-f_1\|_{L^{p'}(\Omega)}+ \| g_s-g_1\|_{W^{s,p}(\Omega)} + \Big| \int_\Omega f_1 w_s\d x -\cE^{s,p}_\Omega(u_1,w_s)\Big|\\
&\quad +
\|(-\Delta)^s_{p,\Omega} u_1 - (-\Delta)^1_{p,\Omega}u_1\|_{ (W^{1,p}_0(\Omega))'}\Big).
\end{split}
\end{align*}
If we further assume that $w_s \in W^{1,p}_0(\Omega)$ so that  $\cE^{1,p}_\Omega(u_1, w_s) = \int_\Omega f_1 w_s$, we obtain
\begin{align*}
\| u_s-u_1\|_{W^{s,p}(\Omega)}
&\leq C\Big(\|f_s-f_1\|_{L^{p'}(\Omega)}+ \| g_s-g_1\|_{W^{s,p}(\Omega)}+ \|(-\Delta)^s_{p,\Omega} u_1- (-\Delta)^1_{p,\Omega}u_1\|_{ (W^{1,p}_0(\Omega))'}\hspace{-0.5ex}\Big).
\end{align*}

\end{remark}

\begin{proof}[\textbf{Proof of Theorem \ref{thm:optimal-conv-eigpair}}]
Recall that  each $(\lambda_s, \varphi_s)$ satisfies  $\|\varphi_s\|_{L^p(\Omega)}= 1$  and
\begin{align*}
\cE^{s,p}_\Omega(\varphi_s, v) = \lambda_s \int_\Omega |\varphi_s|^{p-2}\varphi_s v \d x \qquad \text{ for all }\,\,  v \in W^{s,p}_0(\Omega).
\end{align*}
This is equivalent to say that $\varphi_s$ minimizes of the fractional Rayleigh quotient,
\begin{align*}
\lambda_s =\cE^{s,p}_\Omega(\varphi_s, \varphi_s) = \min_{\substack{v \in W^{s,p}_0(\Omega) \\ v \neq 0}} \frac{\cE^{s,p}_\Omega(v,v)}{\|v\|_{L^p(\Omega)}^p} .
\end{align*}
Furthermore, the robust Poincar\'e-Friedrichs inequality \eqref{eq:rob-poincare-fried-regio} implies that
\begin{align*}
1=\|\varphi_s\|^p_{L^p(\Omega)}\leq B\cE^{s,p}_\Omega(\varphi_s, \varphi_s)= B \lambda_s,
\end{align*}
where we take into account $c^{-1}\leq \frac{C_{d,p,s}}{s(1-s)}\leq c$ for some $c= c(d,p)>1$ independent on $s$. For fixed $\varphi\in C^\infty_c(\Omega)\setminus\{0\}$, using Lemma \ref{lem:frac-unif-bound-trace-bis} we find that
\begin{align*}
\lambda_s= \frac{\cE^{s,p}_\Omega(\varphi, \varphi)}{\|\varphi\|^p_{L^p(\Omega)}}\leq C\frac{\|\varphi\|^p_{W^{1,p}(\Omega)}}{\|\varphi\|^p_{L^p(\Omega)}}=: C_\varphi.
\end{align*}
It appears that $\frac{1}{B}\leq \lambda_s \leq C_\varphi$ and  $\|f_s\|^{p'}= \|\varphi_s\|^p_{L^p(\Omega)}=1$  where we put $f_s= |\varphi_s|^{p-2} \varphi_s$.
 Thence, there exist a subsequence $s_n\to 1^-$, $f_1\in L^{p'}(\Omega)$ and  $\frac{1}{B}\leq \lambda_1\leq  C_\varphi$  such that  $f_{s_n}\rightharpoonup f_1$ in $L^{p'}(\Omega)$ and $\lambda_{s_n}\to \lambda_1$. It follows that $(\lambda_{s_n} f_{s_n})_n$ asymptotically  weakly converges to $\lambda_1f_1$. Accordingly Theorem \ref{thm:opti-conv-dirch-frac-regio} implies the optimal convergence
\begin{align*}
\lim_{n\to\infty} \|\varphi_{s_n} - \varphi_1\|_{W^{s_n,p}(\Omega)} = 0
\end{align*}
where $\varphi_1\in W^{1,p}_0(\Omega)$ satisfies
the weak formulation
\begin{align*}
\cE^{1,p}_\Omega (\varphi_1,v)= \int_\Omega f_1 v\d x\qquad\text{for all $v\in W^{1,p}_0(\Omega)$}.
\end{align*}
 Let show that $f_1= |\varphi_1|^{p-2}\varphi_1$.
Since $\|\varphi_{s_n}\|_{L^p(\Omega)} = 1$ for all $n$, the strong convergence $\varphi_{s_n}\to \varphi_1$ in $L^p(\Omega)$ directly implies that $\|\varphi_1\|_{L^p(\Omega)}=1$. Moreover, by Vitali's convergence theorem (or Lebesgue's Dominated Convergence Theorem via an $L^p$-dominating subsequence), it follows that $f_{s_n}= |\varphi_{s_n}|^{p-2}\varphi_{s_n}$ converges strongly to $|\varphi_1|^{p-2}\varphi_1$ in $L^{p'}(\Omega)$ as $n \to \infty$. Finally, since $f_{s_n}\rightharpoonup f_1 $ we deduce that $f_1= |\varphi_1|^{p-2}\varphi_1$.
\end{proof}


\noindent \textbf{Data Availability Statement (DAS)}:
Data sharing not applicable, no datasets were generated or analyzed during the current study.
\vspace{0.5mm}

\noindent {\small \textbf{Conflict of Interest:}
{The author declares that there is no conflict of interest regarding the publication of this paper.}

\end{document}